\documentclass[12pt]{amsart}
\setcounter{secnumdepth}{5}
\usepackage[T1]{fontenc}
\usepackage[latin1]{inputenc}
\usepackage{typearea}
\usepackage{geometry}
\usepackage{ulem}
\usepackage{amsmath}
\usepackage{amssymb}
\usepackage{latexsym}
\usepackage{enumerate}
\usepackage{amsthm}
\usepackage[all]{xy}
\usepackage{hhline}
\usepackage{epsf} 
\usepackage{cite}
\usepackage{verbatim}
\usepackage{mathtools}
\usepackage{verbatim}
\usepackage{dsfont}

\renewcommand{\1}{\mathds{1}}
\setlength{\textheight}{660pt}
\allowdisplaybreaks

\usepackage{color}

\newcommand{\V}{\mathbb{V}}

\newtheorem{theorem}{{\sc Theorem}}[section]
\newtheorem{corollary}[theorem]{{\sc Corollary}}

\newtheorem{lemma}[theorem]{{\sc Lemma}}
\newtheorem{proposition}[theorem]{{\sc Proposition}}

\theoremstyle{remark}
\newtheorem{remark}[theorem]{{\sc Remark}}

\theoremstyle{definition}
\newtheorem{definition}[theorem]{\sc definition}

%
%
\newcommand{\R}{\mathbb{R} }
\newcommand{\N}{\mathbb{N} }

\newcommand{\B}{\mathcal{B}}
\newcommand{\F}{\mathcal{F}}
\newcommand{\G}{\mathcal{G}}

\newcommand{\D}{\mathcal{D}}
\newcommand{\W}{\mathcal{W}}

\newcommand{\Sm}{\mathbb{S}_m}

\newcommand{\Sp}{\mathbb{S}_p}

\newcommand{\Tr}{\textnormal{Tr}}
\newcommand{\Prob}{\mathbb{P}}
\newcommand{\E}{\mathbb{E}}

\newcommand{\xbf}{\mathbf{x}}
\newcommand{\ybf}{\mathbf{y}}
\newcommand{\tbf}{\mathbf{t}}
\newcommand{\sbf}{\mathbf{s}}
\newcommand{\wbf}{\mathbf{w}}

\providecommand{\abs}[1]{\lvert #1\rvert}
\providecommand{\babs}[1]{\bigl\lvert #1\bigr\rvert}

\providecommand{\norm}[1]{\lVert #1\rVert}
\providecommand{\fnorm}[1]{\lVert #1\rVert_\infty}

\providecommand{\Opnorm}[1]{\lVert #1\rVert_{\op}}
\providecommand{\Enorm}[1]{\lVert #1\rVert_2}
\providecommand{\HSnorm}[1]{\lVert #1\rVert_{\HS}}
\DeclareMathOperator{\diag}{diag}
\DeclareMathOperator{\Var}{Var}

\DeclareMathOperator{\Cov}{Cov}

\DeclareMathOperator{\op}{op}
\DeclareMathOperator{\Hess}{Hess}
\DeclareMathOperator{\HS}{H.S.}

\DeclarePairedDelimiter{\ceil}{\lceil}{\rceil}
\DeclarePairedDelimiter{\floor}{\lfloor}{\rfloor}

%
%

\renewcommand{\phi}{\varphi}
\renewcommand{\epsilon}{\varepsilon}

\renewcommand{\rho}{\varrho}

\begin{document}

\title[CLTs for symmetric $U$-statistics]{Quantitative CLTs for symmetric $U$-statistics \\ using contractions}
\author{Christian D\"obler \and Giovanni Peccati}
\thanks{\noindent Universit\'{e} du Luxembourg, Unit\'{e} de Recherche en Math\'{e}matiques \\
E-mails: christian.doebler@uni.lu, giovanni.peccati@uni.lu
}
\begin{abstract}  {We consider sequences of symmetric $U$-statistics, not necessarily Hoeffding-degenerate, both in a one- and multi-dimensional setting, and prove quantitative central limit theorems (CLTs) based on the use of {\it contraction operators}. Our results represent an explicit counterpart to analogous criteria that are available for sequences of random variables living on the Gaussian, Poisson or Rademacher chaoses, and are perfectly tailored for geometric applications. As a demonstration of this fact, we develop explicit bounds for subgraph counting in generalised random graphs on Euclidean spaces; special attention is devoted to the so-called `dense parameter regime' for uniformly distributed points, for which we deduce CLTs that are new even in their qualitative statement, and that substantially extend classical findings by Jammalamadaka and Janson (1986) and Bhattacharaya and Ghosh (1992).}

\smallskip

\noindent{\it Keywords}: $U$-statistics; Central Limit Theorem; Error Bounds; Contractions; Product Formula; Random Geometric Graphs; Hoeffding decomposition; Stein's method; Exchangeable pairs. 
\end{abstract}

\maketitle

\section{Motivation and Overview}\label{s:intro0}

\subsection{ Introduction} In the recent reference \cite{DP16}, we have provided a multidimensional and quantitative version of a seminal result by de Jong \cite{deJo89, deJo90}, roughly stating that, {\it if ${\bf F} = \{ F_n : n\geq 1\}$ is a normalized sequence of random variables having the form of degenerate, not necessarily symmetric $U$-statistics of a fixed order, and ${\bf F}$ enjoys an appropriate Lindeberg property, then a sufficient condition for $F_n$ to verify a central limit theorem (CLT) (as $n\to\infty$) is that $\E[F_n^4] \to 3 $.} Observe that $3 = \E[N^4]$, where $N\sim \mathcal{N}(0,1)$ is a standard Gaussian random variable. 

\smallskip

 The aim of this paper is to develop some remarkable applications and extensions of the main results of \cite{DP16, deJo89, deJo90} to the case of {\it symmetric} and degenerate $U$-statistics, in a possibly multidimensional setting.
 {By symmetric we mean here that the corresponding kernel does not depend on the choice of the subset of {the} random input, although it might well vary with the sample size $n$.} In particular, our main aim is to establish a collection of quantitative one-  and multi- dimensional CLTs (see Theorem \ref{1dimbound} and Theorem \ref{mdimbound} below), with explicit bounds expressed in terms of {\it contraction operators} -- see Section \ref{intro} below as well as \cite[Section 6]{Lastsv} for definitions. Our tools will involve new multiplication formulae for $U$-statistics (see Proposition \ref{pform}), as well as new estimates on contraction operators (see Lemma \ref{contrlemma}), that seem to have an independent interest. 

\smallskip

Although the previously quoted results only involve degenerate $U$-statistics, we will show in Section 5 that they can be naturally generalized to the case of arbitrary symmetric $U$-statistics, {by exploiting the explicit form of}{ their Hoeffding decomposition, {together with} our multivariate results.} As discussed in great detail in the two monographs \cite{NouPecbook, PecRei16}, as well as in the papers \cite{LRP1, LRP2, NPR10b}, contraction operators play a fundamental role in CLTs involving random variables belonging to the Wiener chaos of a Gaussian field, of a Poisson measure or of a Rademacher sequence. To the best of our knowledge, our contributions represents the first systematic use of contraction operators in the framework of general symmetric $U$-statistics.

\smallskip

As the discussion in \cite{BPsv} and the references therein largely demonstrates, the use of contraction operator is well-adapted for dealing with geometric applications, involving e.g. additive functionals of random geometric graphs, like the total length, or subgraph counting statistics. In the last section of the present paper, we will apply our results to edge-counting statistics of geometric random graphs, belonging to the family of geometric structures studied in \cite{Penrose-book}, thus substantially generalising some estimates from \cite{LRP2}, as well as from the classical references \cite{BhaGo92, JJ}.

\smallskip

\subsection{ Comments on previous literature}  Due to the generality of our results, the present work is related to most articles dealing with the asymptotic normality of symmetric $U$-statistics, like the classical paper \cite{Hoeffding} about non-degenerate $U$-statistics given by fixed kernels, as well as the more recent papers \cite{JJ, Hall84, BhaGo92, Weber, RiRo97}, in which the considered kernels might well depend on the sample size $n$. We would like to point out explicitly that, like this work, also the papers \cite{JJ, BhaGo92} prove asymptotic normality of one-dimensional $U$-statistics that do not necessarily have a dominant Hoeffding component via a multivariate CLT for the vector of Hoeffding components. Our method can be seen as a quantitative counterpart to such an approach. Moreover, whereas the references \cite{JJ, Hall84, BhaGo92, Weber} provide in general non-equivalent and {very technical} conditions for asymptotic normality, our statements {will only involve simple analytic quantities}, merely depending on norms of contractions of the kernels. We believe that, as in the Poisson situation \cite{LRP1, LRP2}, such conditions are most suitable {for a large array of} possible applications --- plausibly much wider than the set of examples discussed in the present paper. Finally, although for symmetric $U$-statistics the results of \cite{DP16} imply asymptotic normality whenever each Hoeffding component satisfies a fourth moment condition, these moment conditions are generally quite hard to check in practice. This remark applies even more so, when the $U$-statistic is 
nondegenerate so that one would have to deal with the complicated expressions for the kernels appearing in the Hoeffding decomposition.

\smallskip

As in \cite{DP16}, our results rely on Stein's method of exchangeable pairs \cite{St86}. Other articles which have proved (quantitative) CLTs for $U$-statistics via this approach include \cite{ReiRoe10, RiRo97}. However, since \cite{ReiRoe10} only deals with 
non-degenrate kernels that do not depend on $n$, the overlap with the present paper seems marginal. In \cite{RiRo97}, {the class of} so-called {\it weighted $U$-statistics} is considered, and CLTs are obtained for non-degenrate kernels of arbitrary order as well as 
for degenerate kernels of order $2$. In the latter case, and when all weights are set to $1$, our bound in Theorem \ref{1dimbound} not only improves on \cite[Theorem 1.4]{RiRo97} with respect to the rate of convergence but also deals with degenrate 
kernels of arbitrary orders.

\smallskip

We eventually observe that an alternate approach for obtaining the main results of the present paper (in particular, the general bounds of Section 5) could be based, in principle, on an adequate generalization of the de-Poissonization techniques of \cite{DynMan83} to the case of non-degenerate kernels whose expression possibly depends on the sample size, that should then be combined with the estimates from \cite{LRP2}. {In the general case of a sequence of non-degenerate $U$-statistics whose kernel varies with the sample size, implementing such an approach would involve a number of highly non-trivial technical difficulties: we therefore prefer to keep this direction of research as a separate subject of further investigation.} We stress that the intrinsic approach developed in the present paper will also yield some remarkable results of independent interest, most notably the product formulae stated in the next section.

\subsection{Plan} Section 2 contains some preliminary results, as well as a discussion of product formulae for degenerate $U$-statistics, and several useful estimates for contraction operators. Section 3 deals with one-dimensional approximation results for degenerate $U$-statistics, whereas the multidimensional case is dealt with in Section 4. In Section 5, we establish a number of new bounds for general $U$-statistics, whereas an application to random graphs is detailed in Section 6. Some technical proofs are collected in Section 7.

\section{Preliminary Notions and Auxiliary Results}\label{intro}

{We will now present several useful results concerning the Hoeffding decompositions of square-integrable $U$-statistics, as well as contraction operators. Both constitute the theoretical backbone of our approach.}
\smallskip

Every random object appearing in the sequel is defined on a suitable common probability space $(\Omega, \F, \Prob)$.

\subsection{Symmetric kernels and $U$-statistics}\label{ss:kernels}

Let $X_1,\dotsc,X_n$ be i.i.d. random variables taking values in a measurable space $(E,\mathcal{E})$ {(that we fix for the rest of this section)} and {denote by $\mu$ their common distribution}. For a fixed $p\in[n]:=\{1,\dotsc,n\}$, let 
\[\psi:\bigl(E^p,\mathcal{E}^{\otimes p}\bigr)\rightarrow\bigl(\R,\B(\R)\bigr)\] 
be a symmetric and measurable kernel of order $p$. 
By ``symmetric'' we mean that, for all $x=(x_1,\dotsc,x_p)\in E^p$ and each $\sigma\in\Sp$, the symmetric group acting on $\{1,\dotsc,p\}$, one has that 
\[\psi(x_1,\dotsc,x_p)=\psi(x_{\sigma(1)},\dotsc,x_{\sigma(p)})\,.\]
In general, the kernel $\psi$ might also depend on {the parameter} $n$, but we will {often suppress such a dependence, in order to simplify the notation}. 

\medskip

{In what follows, we will} write $X:=(X_i)_{1\leq i\leq n}$, {and use the symbol 
\begin{equation}\label{e:dp} \D_p:=\D_p(n):=\{J\subseteq[n]\,:\,\abs{J}=p\}
\end{equation}
to denote the collection of all $p$-subsets of $[n]$}. {For $p, \psi, X$ as above, we define}
\begin{align}\label{e:sus}
 J_p(\psi)&:=J_{p,X}(\psi):= \sum_{J\in\D_p}\psi(X_j,j\in J)=\sum_{1\leq i_1<\dotsc<i_p\leq n}\psi(X_{i_1},\dotsc,X_{i_p})\,.
\end{align}
We say that the random variable $ J_p(\psi)$ is the {\bf $U$-statistic of order $p$}, {\bf based on $X$ and generated by the kernel $\psi$}. For $p=0$ and a constant $c\in\R$ we further let $J_0(c):=0$. 

\medskip

Now assume that $p\geq1$ and $\psi\in L^1(\mu^{\otimes p})$. The kernel $\psi$ is called \textbf{(completely) degenerate} or \textbf{canonical} with respect to $\mu$, if 
\begin{equation*}
 \int_E\psi(x_1,x_2,\dotsc,x_p)d\mu(x_1)=0\quad\text{for } \mu^{\otimes p-1}\text{-a.a. }(x_2,\dotsc,x_p)\in E^{p-1}\,,
\end{equation*}
or, equivalently, if
\begin{equation*}
 \E\bigl[\psi(X_1,\dotsc,X_p)\,\bigl|\,X_1,\dotsc,X_{p-1}\bigr]=0\quad\Prob\text{-a.s.}
\end{equation*}

\begin{remark}{\rm 

In non-parametric statistics  (see e.g. the classical references \cite{KB94, Serf80}), the quantity 
\begin{equation*}
 U_p(\psi)=U_{p,X}(\psi):=\binom{n}{p}^{-1}J_p(\psi)=\binom{n}{p}^{-1}\sum_{1\leq i_1<\dotsc<i_p\leq n}\psi(X_{i_1},\dotsc,X_{i_p})
\end{equation*}
is called a \textit{$U$-statistic}, since it is always an \textit{unbiased} estimator of the parameter
\begin{equation*}
 \theta=\theta(\mu):=\E\bigl[\psi(X_1,\dotsc,X_p)\bigr]\,;
\end{equation*}
note that many well-known estimators from statistics turn out to be $U$-statistics (see again \cite{KB94, Serf80}). {We however choose to refer to the \textit{unaveraged version} $J_p(\psi)$ as a ``$U$-statistic''. Moreover, 
in this situation the kernel is typically not degenerate, since $\theta$ would have to be equal to $0$ otherwise. However, the centered kernel $\psi-\theta$ might well be degenerate.

}}
\end{remark}

\subsection{Hoeffding decompositions: general definition}\label{ss:hoeffding}

It is well-known {(see e.g. \cite{Hoeffding, Serf80, Vit92, DynMan83, LRP3})} that every {random variable}
\begin{equation*}
 Y=g(X_1,\dotsc,X_n)\in L^1(\Prob),
\end{equation*}
{having the form of a deterministic function $g$ of} (not necessarily identically distributed) independent random variables $X_1,\dotsc,X_n$, has a $\Prob$-a.s. unique representation {of the type}
\begin{equation}\label{HDgen}
Y=\sum_{M\subseteq[n]} Y_M =\sum_{s=0}^n\Biggl(\sum_{\substack{M\subseteq[n]:\\ \abs{M}=s}}Y_M\Biggr)
\end{equation}
{where, }for each $M\subseteq [n]$, the summand $Y_M$ is measurable with respect to $\F_M:=\sigma(X_j,j\in M)$ and, furthermore, 
\[\E[Y_M\,|\,\F_J]=0\quad\text{holds, whenever }M\nsubseteq J\,.\]
\noindent The representation \eqref{HDgen} is the {celebrated} \textbf{Hoeffding decomposition} of $Y$, {playing a fundamental role in many theoretical and applied problems involving the analysis of $U$-statistics in the large-sample limit, see again \cite{KB94, LRP3, Serf80} and the references therein}. The following explicit formula for the summands $Y_M$, $M\subseteq[n]$, is also {easily deduced from the exclusion-inclusion principle}:
\begin{equation*}
 Y_M=\sum_{J\subseteq M}(-1)^{\abs{M}-\abs{J}}\E[Y\,|\,\F_J]\,,
\end{equation*}
yielding in particular that $Y_\emptyset=\E[Y]$ a.s.-$\Prob$.

\subsection{Hoeffding decompositions for symmetric $U$-statistics}\label{ss:hoeffdingS}

Now assume that the {random variable} $Y$ is given by a $U$-statistic $J_p(\psi)$ {based on a vector $X = (X_1,..., X_n)$ of i.i.d. random variables, and generated by} a symmetric kernel $\psi$, {that is:
$$
Y = g(X_1,...,X_n) = J_{p,X}(\psi),
$$
where we used the notation \eqref{e:sus}}. In this case, the Hoeffding decomposition of $Y= J_p(\psi)$ can be expressed as the sum of its expectation and of a linear combination of $U$-statistics
generated by symmetric and degenerate kernels $\psi_s$ of orders $s=1,\dotsc,p$, {that is},
\begin{align}\label{HDsym}
 J_p(\psi)&=\E\bigl[J_p(\psi)\bigr]+\sum_{s=1}^p \binom{n-s}{p-s} J_s(\psi_s)=\sum_{s=0}^p \binom{n-s}{p-s} J_s(\psi_s)\notag\\
 &=\E\bigl[J_p(\psi)\bigr]+\sum_{s=1}^p \binom{n-s}{p-s} \sum_{1\leq i_1<\dotsc<i_s\leq n}\psi_s(X_{i_1},\dotsc,X_{i_s})\,,
\end{align}
where 
\begin{align}\label{defpsis}
\psi_s(x_1,\dotsc,x_s)&=\sum_{k=0}^s(-1)^{s-k}\sum_{1\leq i_1<\dotsc<i_k\leq s} g_k(x_{i_1},\dotsc,x_{i_k})
 \end{align}
and the symmetric functions $g_k:E^k\rightarrow\R$ are defined by 
\begin{equation}\label{gk}
 g_k(y_1,\dotsc,y_k):=\E\bigl[\psi(y_1,\dotsc,y_k,X_1,\dotsc,X_{p-k})\bigr]\,,
\end{equation}
in such a way that, for $1\leq s\leq p$, $\psi_s$ is symmetric and degenerate of order $s$. In particular, one has $g_0\equiv\psi_0\equiv \E\bigl[\psi(X_1,\dotsc,X_p)\bigr]$ and $g_p=\psi$.
For $s=1,\dotsc,p$ one has the alternative formula
\begin{align}\label{psis2}
 \psi_s(x_1,\dotsc,x_s)&=g_s(x_1,\dotsc,x_s)-\E\bigl[\psi(X_1,\dotsc,X_p)\bigr]\notag\\
 &\hspace{3cm}-\sum_{k=1}^{s-1} \sum_{1\leq i_1<\dotsc<i_k\leq s} \psi_k(x_{i_1},\dotsc,x_{i_k})\,,
\end{align}
{and one can easily check that the random variables $Y_M$ appearing in \eqref{HDgen} verify the relations }
\begin{equation*}
 Y_M=\bigl(J_p(\psi)\bigr)_M=\binom{n-\abs{M}}{p-\abs{M}}\psi_{\abs{M}}(X_j,j\in M)\,,\quad M\subseteq[n]\text{ s.t. }\abs{M}\leq p\,,
\end{equation*}
and $Y_M=0$ if $\abs{M}>p$.

\begin{remark}{\rm

Plainly, for the averaged version of the $U$-statistics, the Hoeffding decomposition reads
\begin{equation*}
 U_p(\psi)=\theta(\mu)+\sum_{s=1}^p \binom{p}{s}U_s(\psi_s)\,.
\end{equation*}
}

\end{remark}

\subsection{Analysis of Variance}\label{ss:var}

In this work, we are interested in symmetric $U$-statistics $Y = J_p(\psi)$, based on an i.i.d. sample $X$, such that the kernel $\psi$ is square-integrable with respect to $\mu^{\otimes p}$. Under such an assumption, the summands in the Hoeffding decomposition \eqref{HDsym} are orthogonal in $L^2(\Prob)$, thanks to the degeneracy of the 
kernels $\psi_s$, $s=1,\dotsc,p$. In particular, we have that
\begin{align}\label{varJp}
 \Var\bigl(J_p(\psi)\bigr)&=\sum_{s=1}^p \binom{n-s}{p-s}^2 \Var\bigl(J_s(\psi_s)\bigr)\notag\\
 &=\sum_{s=1}^p \binom{n-s}{p-s}^2\binom{n}{s}\Var\bigl(\psi_s(X_1,\dotsc,X_s)\bigr)\,.
\end{align}
Choosing $n=p$ leads to the following useful lower bound on the variance:
\begin{align}\label{varlb}
 \Var\bigl(\psi(X_1,\dotsc,X_p)\bigr)&=\sum_{s=1}^p\binom{p}{s}\Var\bigl(\psi_s(X_1,\dotsc,X_s)\bigr)\notag\\
 &\geq\Var\bigl(\psi_p(X_1,\dotsc,X_p)\bigr)\,.
\end{align}

Another useful variance formula in terms of the functions $g_k$ is as follows (see e.g. \cite[p. 183]{Serf80}):
\begin{align}\label{varJp2}
 \Var\bigl(J_p(\psi)\bigr)&= \binom{n}{p}\sum_{k=1}^p\binom{p}{k}\binom{n-p}{p-k}\Var\bigl(g_k(X_1,\dotsc,X_k)\bigr)\,.
\end{align}
Recalling that $g_p=\psi$ yields the following lower bound on the variance of $J_p(\psi)$:  
\begin{align}\label{varlb2}
 \Var\bigl(J_p(\psi)\bigr)&\geq \binom{n}{p}\Var\bigl(\psi(X_1,\dotsc,X_p)\bigr)\,.
\end{align}

\begin{remark}[On notation]{\rm For the rest of the paper, for every integer $m$ and every real $r>0$, we will use the standard notation:
$$
L^r(\mu^{\otimes m}) := L^r(E^m, \mathcal{E}^{\otimes m}, \mu^{\otimes m})
$$
Given a measurable mapping $\phi : E^m \to \R$, we will often write
$$
\| \phi\|_{L^r(\mu^{\otimes m})} := \left[ \int_{E^m} | \phi |^r \, d\mu^{\otimes m}\right]^{1/r}
$$
(by a slight abuse of notation), even when the right-hand side of the previous equation is infinite.
}
\end{remark}

\subsection{Contractions}\label{ss:contractions}

{We will now introduce one of the main analytical objects of the paper, that is, ``contraction kernels'' defined in term of pairs of square-integrable mappings}. For integers $p,q\geq1$, $0\leq l\leq r\leq p\wedge q$ and two symmetric kernels $\psi\in L^2(\mu^{\otimes p})$ and $\phi\in L^2(\mu^{\otimes q})$, define the \textbf{contraction kernel} $\psi\star_r^l \phi$ on $E^{p+q-r-l}$ by {the relation}
\begin{align}
 &(\psi\star_r^l \phi)(y_1,\dotsc,y_{r-l},t_1,\dotsc,t_{p-r},s_1,\dotsc,s_{q-r})\notag\\
&:=\int_{E^l}\Bigl(\psi\bigl(x_1,\dotsc,x_l, y_1,\dotsc,y_{r-l},t_1,\dotsc,t_{p-r}\bigr)\notag\\
&\hspace{3cm}\cdot\phi\bigl(x_1,\dotsc,x_l, y_1,\dotsc,y_{r-l},s_1,\dotsc,s_{q-r}\bigr)\Bigr)d\mu^{\otimes l}(x_1,\dotsc,x_l)\label{defcontr1}\\
&=\E\Bigl[\psi\bigl(X_1,\dotsc,X_l, y_1,\dotsc,y_{r-l},t_1,\dotsc,t_{p-r}\bigr)\notag\\
&\hspace{3cm}\cdot\phi\bigl(X_1,\dotsc,X_l, y_1,\dotsc,y_{r-l},s_1,\dotsc,s_{q-r}\bigr)\Bigr]\label{defcontr2}\, ,
\end{align}
{for every $(y_1,\dotsc,y_{r-l},t_1,\dotsc,t_{p-r},s_1,\dotsc,s_{q-r})$ belonging to the set $A_0 \subset E^{p+q-r-l}$ such that the right-hand side of the previous equation is a well-defined real number, and set it equal to zero otherwise}. Given $\psi, \varphi, r, l$ as above, we say that the kernel $\psi\star_r^l \phi$ is {\bf well-defined} if $\mu^{\otimes p+q-r-l}(A^c_0) =0$ (where $A_0$ is the set introduced in the previous sentence). Note that, in general, it is neither clear from the outset that $\psi\star_r^l \phi$ is well-defined in the sense specified above, nor that it is again square-integrable. 

\medskip

If $l=0$, then \eqref{defcontr1} is to be understood in the following way: 
\begin{align*}
 &(\psi\star_r^0 \phi)(y_1,\dotsc,y_{r},t_1,\dotsc,t_{p-r},s_1,\dotsc,s_{q-r})\notag\\
 &=\psi(y_1,\dotsc,y_{r},t_1,\dotsc,t_{p-r})\phi( y_1,\dotsc,y_{r},s_1,\dotsc,s_{q-r})\,.
\end{align*}
In particular, if $l=r=0$, then $\psi\star_r^l \phi$ reduces to the \textit{tensor product}
$$\psi\otimes \phi:E^{p+q}\rightarrow\R$$ of $\psi$ and $\phi$, given by 
\begin{align*}
 (\psi\otimes \phi)(x_1,\dotsc,x_{p+q})&=\psi(x_1,\dotsc,x_p)\cdot\phi(x_{p+1},\dotsc,x_{p+q})\,.
\end{align*}
Note also that $\psi\star_p^0\psi=\psi^2$ is square-integrable if and only if $\psi\in L^4(\mu^{\otimes p})$. Hence, $\psi\star_r^l\phi$ might not be in $L^2(\mu^{\otimes p+q-r-l})$ even though $\psi\in L^2(\mu^{\otimes p})$ and $\phi\in L^2(\mu^{\otimes q})$. Moreover, if $l=r=p$, then $\psi\star_p^p\psi=\|\psi\|_{L^2(\mu^{\otimes p})}^2$ is constant.

The next result lists the properties of contraction kernels that 
are most useful for the present work. The (quite technical) proof is deferred to Section \ref{proofs}.

\begin{lemma}\label{contrlemma}
Let $p,q\geq1$ be integers and fix two symmetric kernels $\psi\in L^2(\mu^{\otimes p})$ and $\phi\in L^2(\mu^{\otimes q})$.
\begin{enumerate}[{\normalfont (i)}]
 \item For all $0\leq l\leq r\leq p\wedge q$ the function $\psi\star_r^l \phi$ given by  \eqref{defcontr1} is well-defined, {in the sense specified at the beginning of the present subsection}.
 \item For all $0\leq l\leq r\leq p\wedge q$ one has that $$\displaystyle \|\psi\star_r^l\phi\|_{L^2(\mu^{\otimes p+q-r-l})}^2\leq  \|\psi\star_p^{p-r+l}\psi\|_{L^2(\mu^{\otimes r-l})} \cdot\|\phi\star_q^{q-r+l}\phi\|_{L^2(\mu^{\otimes r-l})},$$ 
 where both sides of the inequality might assume the value $+\infty$.
 \item For all $0\leq l\leq r\leq p\wedge q$ one has that $$\displaystyle \|\psi\star_r^l\phi\|_{L^2(\mu^{\otimes p+q-r-l})}^2\leq  \|\psi\star_p^{p-r}\psi\|_{L^2(\mu^{\otimes r})} \cdot\|\phi\star_q^{q-r}\phi\|_{L^2(\mu^{\otimes r})},$$ 
 where both sides of the inequality might assume the value $+\infty$.
 \item If $\psi\in L^4(\mu^{\otimes p})$ and $\phi\in L^4(\mu^{\otimes q})$, then, for all $0\leq r\leq p\wedge q$, one has $\psi\star_r^l \phi\in L^2(\mu^{\otimes p+q-r-l})$ and 
 $$\displaystyle \|\psi\star_r^l\phi\|_{L^2(\mu^{\otimes p+q-r-l})}\leq \|\psi\|_{L^4(\mu^{\otimes p})}\|\phi\|_{L^4(\mu^{\otimes q})}.$$
 \item For all $0\leq r\leq p\wedge q$ the function $\psi\star_r^r \phi$ is in $ L^2(\mu^{\otimes p+q})$ and
 $$\displaystyle \|\psi\star_r^r\phi\|_{L^2(\mu^{\otimes p+q-2r})}\leq \|\psi\|_{L^2(\mu^{\otimes p})}\|\phi\|_{L^2(\mu^{\otimes q})}.$$
\item If, for all $0\leq l\leq p-1$, $\psi\star_{p}^{l}\psi\in L^2(\mu^{\otimes p-l})$ and, for all $0\leq k\leq q-1$,  $\phi\star_{q}^{k}\phi\in L^2(\mu^{\otimes q-k})$, then, 
for all $0\leq l\leq r\leq p\wedge q$, one has 
$\psi\star_r^l \phi\in L^2(\mu^{\otimes p+q-r-l})$ and 
\begin{align*}
 \|\psi\star_r^l\phi\|_{L^2(\mu^{\otimes p+q-r-l})}^2&=\langle \psi\star_{p-l}^{p-r}\psi, \phi\star_{q-l}^{q-r}\phi\rangle_{L^2(\mu^{\otimes r+l})}\\
&\leq\|\psi\star_{r}^{l}\psi\|_{L^2(\mu^{\otimes 2p-r-l})}\cdot  \|\phi\star_{r}^{l}\phi\|_{L^2(\mu^{\otimes 2q-r-l})}<\infty\,.
\end{align*}
\end{enumerate}
\end{lemma}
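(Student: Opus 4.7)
All six items rest on the single master identity
\begin{align*}
\|\psi\star_r^l\phi\|_{L^2(\mu^{\otimes p+q-r-l})}^2 & = \int \psi(x,y,t)\psi(x',y,t)\phi(x,y,s)\phi(x',y,s) \\
& \qquad \times d\mu^{\otimes l}(x)\, d\mu^{\otimes l}(x')\, d\mu^{\otimes r-l}(y)\, d\mu^{\otimes p-r}(t)\, d\mu^{\otimes q-r}(s),
\end{align*}
obtained by expanding the square and applying Fubini (and understood as an element of $[0,+\infty]$ prior to any integrability argument). Different ways of regrouping and estimating this integral yield the six assertions.

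For part (i), I would apply Cauchy--Schwarz in the innermost $l$-fold integral over $x$ to obtain the pointwise bound $|(\psi\star_r^l\phi)(y,t,s)|^2 \leq \bigl(\int\psi^2(x,y,t)\,d\mu^{\otimes l}(x)\bigr)\bigl(\int\phi^2(x,y,s)\,d\mu^{\otimes l}(x)\bigr)$, and invoke Fubini applied to $\psi^2\in L^1(\mu^{\otimes p})$ and $\phi^2\in L^1(\mu^{\otimes q})$ to deduce that the two factors on the right are finite for $\mu^{\otimes p+q-r-l}$-a.e.\ $(y,t,s)$. For (ii), I would integrate the squared version of this pointwise bound over $(y,t,s)$, use Tonelli to decouple the $t$- and $s$-integrations, recognise the inner integrals as $(\psi\star_p^{p-r+l}\psi)(y)$ and $(\phi\star_q^{q-r+l}\phi)(y)$, and finish with one more Cauchy--Schwarz in $y$. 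Item (iii) is then a consequence of (ii): Jensen's inequality on the probability space $(E^l,\mu^{\otimes l})$ gives $\|\psi\star_p^{p-r+l}\psi\|_{L^2(\mu^{\otimes r-l})} \leq \|\psi\star_p^{p-r}\psi\|_{L^2(\mu^{\otimes r})}$, and similarly for $\phi$. Specialising (ii) to $l=r$ collapses both factors to $\|\psi\|_{L^2(\mu^{\otimes p})}^2$ and $\|\phi\|_{L^2(\mu^{\otimes q})}^2$, whence (v). For (iv), applying Jensen once more pointwise in $y$ to bound $(\psi\star_p^{p-r+l}\psi)(y)^2 \leq \int\psi^4\,d\mu^{\otimes l+p-r}$ leads, after integration, to $\|\psi\star_p^{p-r+l}\psi\|_{L^2(\mu^{\otimes r-l})}^2 \leq \|\psi\|_{L^4(\mu^{\otimes p})}^4$ and the corresponding $L^4$-bound for $\phi$.

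The main obstacle is part (vi). Here I would return to the master identity and carry out the $t$- and $s$-integrations first, without Cauchy--Schwarz, recognising that $\int\psi(x,y,t)\psi(x',y,t)\,d\mu^{\otimes p-r}(t) = (\psi\star_{p-l}^{p-r}\psi)(y,x,x')$ with an analogous formula for $\phi$. This produces the exact identity
\[
\|\psi\star_r^l\phi\|_{L^2(\mu^{\otimes p+q-r-l})}^2 = \bigl\langle\, \psi\star_{p-l}^{p-r}\psi,\ \phi\star_{q-l}^{q-r}\phi\,\bigr\rangle_{L^2(\mu^{\otimes r+l})}.
\]
The delicate step is the equality $\|\psi\star_{p-l}^{p-r}\psi\|_{L^2(\mu^{\otimes r+l})} = \|\psi\star_r^l\psi\|_{L^2(\mu^{\otimes 2p-r-l})}$, which allows the forthcoming Cauchy--Schwarz bound to be recast in the form stated in the lemma. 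Both squared norms unfold to the \emph{same} quartic integral of $\psi(x,y,u)\psi(x,y,u')\psi(x',y,u)\psi(x',y,u')$, and their equality is established by a relabelling of integration variables that exchanges the roles of the two contracted blocks of dimensions $p-r$ and $l$; the total symmetry of $\psi$ is essential here. Under the hypothesis of (vi), part (iii) (applied once with $\phi=\psi$ and once with $\psi=\phi$) ensures $\psi\star_r^l\psi\in L^2(\mu^{\otimes 2p-r-l})$ and $\phi\star_r^l\phi\in L^2(\mu^{\otimes 2q-r-l})$, so that Cauchy--Schwarz in the above inner product finally delivers both the stated upper bound and the membership $\psi\star_r^l\phi\in L^2(\mu^{\otimes p+q-r-l})$.
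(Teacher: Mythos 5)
Your proposal is correct and rests on the same master moves as the paper (pointwise Cauchy--Schwarz for (i)--(ii), Jensen for (iii)--(iv), expansion of the square plus Fubini for (v)--(vi)). The genuine differences are organizational, and they are worth noting. The paper proves (iii) and (iv) directly from the definition, applying Jensen to the representation \eqref{defcontr2} and then one or two further Cauchy--Schwarz steps; you instead derive both from (ii) by observing that $\|\psi\star_p^{p-r+l}\psi\|_{L^2(\mu^{\otimes r-l})} \le \|\psi\star_p^{p-r}\psi\|_{L^2(\mu^{\otimes r})}$ (for (iii)) and $\|\psi\star_p^{p-r+l}\psi\|_{L^2(\mu^{\otimes r-l})} \le \|\psi\|_{L^4}^2$ (for (iv)) via Jensen on $(E^l,\mu^{\otimes l})$, which is shorter and exposes (ii) as the common source. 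For (vi), the paper establishes the key norm identity $\|\psi\star_{p-l}^{p-r}\psi\|_{L^2(\mu^{\otimes r+l})} = \|\psi\star_r^l\psi\|_{L^2(\mu^{\otimes 2p-r-l})}$ by specialising the inner-product identity \eqref{cl8} to $\phi=\psi$, whereas you obtain it by unfolding both sides to the same quartic integral and relabelling using the total symmetry of $\psi$; both are sound, and your route makes the role of symmetry more transparent. One small asymmetry to be aware of: you invoke (iii) to secure $\psi\star_r^l\psi,\ \phi\star_r^l\phi\in L^2$ in (vi), while the paper deliberately routes this through (ii) (via \eqref{cl13}). Since (ii) is valid for $\sigma$-finite $\mu$ but (iii) requires $\mu$ finite, the paper's choice is what makes Remark~\ref{contrem}(b) --- that (i), (ii), (v), (vi) extend to $\sigma$-finite measures --- go through, whereas your derivation of (vi) would only cover finite $\mu$. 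Under the standing hypothesis of the lemma ($\mu$ a probability measure) this costs nothing, but it is a distinction worth keeping in mind if you ever need the $\sigma$-finite version.
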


\begin{remark}\label{contrem}
\begin{enumerate}[(a)]
\item We will heavily rely on item (iv) for deriving our normal approximation bounds. Moreover, in certain applications item (vi) (which is already contained in Lemma 2.9 of \cite{PZ1}) can be very useful in order to study the asymptotic distributional behaviour of vectors of degenerate $U$-statistics.
\item The contraction kernels defined by \eqref{defcontr1} also play a fundamental role for the normal approximation of functionals of a general Poisson measure {having the form of multiple Wiener-It\^o integrals or, more generally, of $U$-statistics} (see e.g. \cite{PSTU, LRP1, LRP2, PZ1, BPsv}),
as well as of functionals of a Rademacher sequence (see e.g. \cite{NPR10b, KRT1}). In these settings, the measure $\mu$ appearing in \eqref{defcontr1} is the control measure of the Poisson measure and the counting measure 
on $\N$, respectively, and, hence, it is in general not finite. We also stress that items (i), (ii), (v) and (vi) of Lemma \ref{contrlemma} also hold true for $\sigma$-finite measures $\mu$. This will be clear from the proof below.
\item Statements (iii) and (iv) can be {suitably adapted to the framework of a finite measure}, by introducing appropriate additional multiplicative constants on the right hand sides of the respective inequalities. 
For instance, inequality (iv) becomes\\
$\|\psi\star_r^l\phi\|_{L^2(\mu^{\otimes p+q-r-l})}\leq \mu(E)^{l-r+(p+q)/2}\|\psi\|_{L^4(\mu^{\otimes p})}\|\phi\|_{L^4(\mu^{\otimes q})}$. On the other hand, if $\mu(E)=+\infty$, then, in general, there is 
no finite constant $C=C(p,q,r,l)$ such that $\|\psi\star_r^l\phi\|_{L^2(\mu^{\otimes p+q-r-l})}\leq C\|\psi\|_{L^4(\mu^{\otimes p})}\|\phi\|_{L^4(\mu^{\otimes q})}$. Indeed, take $(E,\mathcal{E},\mu)=(\R,\B(\R),\lambda)$, $p=q=r=2$, $l=1$ and\\ 
$\psi(x,y)=\phi(x,y)=(1+x^2)^{-1/4}(1+y^2)^{-1/4}$. Then, 
\begin{align*}
 (\psi\star_{2}^{1}\psi)(x)&=\int_\R\psi(x,y)^2dy=(1+x^2)^{-1/2}\int_\R (1+y^2)^{-1/2}dy=+\infty
\end{align*}
for all $x\in\R$ and, a fortiori, $\|\psi\star_2^1\psi\|_{L^2(\lambda)}=+\infty$ but 
\begin{align*}
 \|\psi\|_{L^4(\lambda^{\otimes 2})}^4=\int_{\R^2} \frac{1}{1+x^2}\frac{1}{1+y^2}d\lambda^{\otimes2}(x,y)=\bigl(\arctan(x)|_{-\infty}^{+\infty}\bigr)^2=\pi^2<\infty\,.
\end{align*}
\end{enumerate}
\end{remark}

\subsection{Product formulae and related estimates}\label{ss:productintro}

{It is easily seen that} the contraction kernels $\psi\star_r^l\phi$ are, in general, not symmetric. If $f:E^p\rightarrow\R$ is an arbitrary function, then we denote by $\tilde{f}$ its \textit{canonical symmetrization} defined via
\begin{equation*}
\tilde{f}(x_1,\dotsc,x_p):=\frac{1}{p!}\sum_{\sigma\in\mathbb{S}_p} f(x_{\sigma(1)},\dotsc,x_{\sigma(p)})\,,
\end{equation*}
where, as before, $\mathbb{S}_p$ indicates the group of permutations of the set $[p]$. It easily follows from Minkowski's inequality that, if $f\in L^2(\mu^{\otimes p})$, then so is $\tilde{f}$ and 
\begin{equation}\label{normftilde}
\norm{\tilde{f}}_{L^2(\mu^{\otimes p})}\leq\norm{f}_{L^2(\mu^{\otimes p})}\,.
\end{equation}

\medskip

The following new formula for the product of two degenerate, symmetric $U$-statistics, {which has an independent interest}, will be crucial for the proofs of the main results provided in this work. 
Such a statement is a more explicit expression of the product formula for degenerate, not necessarily symmetric $U$-statistics which was provided recently in \cite{DP16}; {it also represents a particularly attractive alternative to the combinatorial product formulae for $U$-statistics derived in \cite[Chapter 11]{Maj13}}. The proof is provided in Section \ref{proofs}.
\begin{proposition}[Product formula for degenerate, symmetric $U$-statistics]\label{pform}
Let $p,q\geq1$ be positive integers and assume that $\psi\in L^2(\mu^{\otimes p})$ and $\phi\in L^2(\mu^{\otimes q})$ are degenerate, symmetric kernels of orders $p$ and $q$ respectively. Then, whenever $n\geq p+q$ we have the Hoeffding decomposition:
\begin{equation}\label{prodform}
 J_p(\psi) J_q(\phi)=\sum_{t=0}^{2(p\wedge q)} J_{p+q-t}(\chi_{p+q-t})\,,
\end{equation}
where, for $t\in\{0,1,\dotsc,2(p\wedge q)\}$, the degenerate, symmetric kernel $$\chi_{p+q-t}:E^{p+q-t}\rightarrow\R,$$ of order $p+q-t$, is given by 
\begin{equation}\label{kerpf}
 \chi_{p+q-t}=\sum_{r=\ceil{\frac{t}{2}}}^{t\wedge p\wedge q}\binom{n-p-q+t}{t-r}\binom{p+q-t}{p-r,q-r,2r-t}\bigl(\widetilde{\psi\star_r^{t-r}\phi}\bigr)_{p+q-t}\,.
\end{equation}
{In the previous expression,} the kernels $\bigl(\widetilde{\psi\star_r^{t-r}\phi}\bigr)_{p+q-t}$ appearing in the Hoeffding decomposition of $J_{p+q-t}\bigl((\widetilde{\psi\star_r^{t-r}\phi})_{p+q-t}\bigl)$
have been defined in \eqref{defpsis} and we have written $\ceil{x}$ to indicate the smallest integer greater or equal to the real number $x$.
\end{proposition}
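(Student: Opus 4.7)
The plan is to directly compute the (unique) Hoeffding decomposition of the random variable $Y := J_p(\psi) J_q(\phi)$ and match it against \eqref{prodform}.

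First, I expand $Y$ as a double sum $\sum_{I \in \D_p}\sum_{J \in \D_q}\psi(X_I)\phi(X_J)$ and compute $\E[Y \mid \F_L]$ term by term for each $L \subseteq [n]$. Since $\psi$ and $\phi$ are completely degenerate, integrating either kernel over any single one of its arguments yields zero; hence $\E[\psi(X_I)\phi(X_J) \mid \F_L]$ vanishes as soon as $I\triangle J$ meets $[n]\setminus L$: any index $i \in (I\setminus J)\setminus L$ appears only inside $\psi$, so integrating first over $X_i$ gives $0$ (and symmetrically for $j \in (J\setminus I)\setminus L$). For the surviving pairs (those with $I\triangle J \subseteq L$), writing $r := |I\cap J|$ and $l := |(I\cap J)\setminus L|$, the remaining integration over $X_{(I\cap J)\setminus L}$ recognises exactly the contraction $\psi\star_r^l\phi$, yielding
\begin{equation*}
\E[\psi(X_I)\phi(X_J)\mid\F_L] = (\psi\star_r^l\phi)\bigl(X_{I\cap J\cap L},\ X_{I\setminus J},\ X_{J\setminus I}\bigr).
\end{equation*}

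Since $Y$ is symmetric in $X_1,\dotsc,X_n$, each Hoeffding summand $Y_M$ depends symmetrically only on $(X_j)_{j\in M}$ and is given by the M\"obius formula $Y_M=\sum_{L\subseteq M}(-1)^{|M|-|L|}\E[Y\mid\F_L]$. Fixing $|M|=m=p+q-t$ and inserting the expression above, the sum organises naturally by the pair $(r,l)$: a partition of $M$ into a ``$\psi$-unique'' block ($p-r$ indices), a ``$\phi$-unique'' block ($q-r$), and a ``shared-unintegrated'' block ($r-l=2r-t$) contributes the multinomial count $\binom{p+q-t}{p-r,q-r,2r-t}$, while the $l$ ``integrated'' indices of $(I\cap J)\setminus L$ must be chosen from the $n-m=n-p-q+t$ indices outside $M$, giving the extra factor $\binom{n-p-q+t}{t-r}$. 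Summing the alternating M\"obius contributions in $|L\setminus M|$ and comparing with \eqref{defpsis} identifies the resulting symmetric kernel as the degenerate top component $\bigl(\widetilde{\psi\star_r^l\phi}\bigr)_{p+q-t}$ of the symmetrised contraction.

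Setting $t := r+l$, the constraints $l\geq 0$ and $l\leq r\leq p\wedge q$ translate into $\lceil t/2\rceil\leq r\leq t\wedge p\wedge q$ and $0\leq t\leq 2(p\wedge q)$, so summing over $(r,l)$ is the same as summing over the pair $(t,r)$ appearing in \eqref{kerpf}. The main obstacle is the M\"obius step: verifying that, after the combinatorial multiplicities produced by summing over $(I,J)$ are absorbed, the alternating sum over subsets $L$ produces exactly the degenerate top component $(\widetilde{\psi\star_r^l\phi})_{p+q-t}$ rather than merely the symmetrised kernel itself. A lighter alternative would be to establish first the ``raw'' identity $J_p(\psi)J_q(\phi)=\sum_{r=0}^{p\wedge q}\binom{p+q-r}{p-r,q-r,r}J_{p+q-r}(\widetilde{\psi\star_r^0\phi})$ by grouping pairs $(I,J)$ only by $|I\cap J|$, and then to Hoeffding-decompose each summand via \eqref{HDsym}--\eqref{defpsis}: the integrations defining the functions $g_k$ in \eqref{gk} automatically reproduce the deeper contractions $\widetilde{\psi\star_r^{l}\phi}$ with $l\geq 1$, and regrouping by $t=r+l$ recovers \eqref{kerpf}.
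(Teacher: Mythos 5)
Your primary plan is essentially the route the paper takes: express $Y_M$ as the alternating sum $\sum_{L\subseteq M}(-1)^{|M|-|L|}\E[Y\,|\,\F_L]$, use complete degeneracy to restrict to pairs $(I,J)$ with $I\Delta J\subseteq L$, identify $\E[\psi(X_I)\phi(X_J)\,|\,\F_L]$ with $(\psi\star_r^l\phi)(X_{I\cap J\cap L},X_{I\setminus J},X_{J\setminus I})$ (matching \eqref{hdvw2} in the paper up to relabelling), and then count multiplicities. The step you flag as the main obstacle --- that the alternating sum over $L$ collapses into the \emph{degenerate top component} $(\widetilde{\psi\star_r^{t-r}\phi})_{p+q-t}$ with the stated binomial prefactors --- is precisely where the paper does the real work: it quotes Theorem~2.6 of \cite{DP16} to get the constraint $M\subseteq J\cup K$ for free; for fixed $L\subseteq M$ and $r$ it counts the $\binom{n-m}{p+q-r-m}$ admissible completions of $(J\cap K)\setminus L$ outside $M$ (which is where the factor $\binom{n-p-q+t}{t-r}$ comes from, not from the integrated indices ``must'' be outside $M$ a priori); it performs an orbit count of size $(p-r)!(q-r)!(m-p-q+2r)!$ to convert the sum over ordered partitions $\Pi_r(L)$ into the symmetrized kernel with the multinomial factor; and finally it recognises the remaining alternating sum over $L$ as the expression \eqref{defpsis} for $(\widetilde{\psi\star_r^{l}\phi})_{m}$. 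Your sketch identifies all of these ingredients correctly but leaves exactly this accounting undone, so as written it is the same approach with the nontrivial part outstanding rather than a complete argument.

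The alternative you propose is a genuinely different organisation. The raw identity
$J_p(\psi)J_q(\phi)=\sum_{r=0}^{p\wedge q}\binom{p+q-r}{p-r,q-r,r}J_{p+q-r}\bigl(\widetilde{\psi\star_r^0\phi}\bigr)$
is correct (it is purely a regrouping of the index set and uses no degeneracy), and then applying \eqref{HDsym} to each term certainly yields \emph{a} Hoeffding decomposition of the product, which by uniqueness must be the one in \eqref{prodform}. However, the kernel of order $p+q-t$ produced this way is naturally a linear combination of $\bigl(\widetilde{\psi\star_{r'}^0\phi}\bigr)_{p+q-t}$ for $r'$ ranging with $r'\le t$, \emph{not} of $\bigl(\widetilde{\psi\star_r^{t-r}\phi}\bigr)_{p+q-t}$ with $\lceil t/2\rceil\le r$. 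To literally recover \eqref{kerpf} you still need to rewrite the degenerate Hoeffding kernels $\bigl(\widetilde{\psi\star_{r'}^0\phi}\bigr)_{p+q-t}$ of the symmetrised tensor-type kernel in terms of the deeper contractions, exploiting degeneracy of $\psi$ and $\phi$ inside the formulas \eqref{defpsis}--\eqref{gk}. That computation is of comparable weight to the direct M\"obius argument --- the orbit-counting work has moved rather than disappeared --- so ``lighter'' oversells it; but it is a perfectly viable alternative path and makes the structural role of the telescoping $l$-index visibly clearer.
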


\begin{remark}\label{pformrem}
\begin{enumerate}[(a)]
 \item Proposition \ref{pform} is in the same spirit as the existing product formulae for multiple stochastic integrals on the Wiener space (see e.g. Theorem 2.7.10 in \cite{NouPecbook}), on the Poisson space (see \cite{Sur84, Lastsv}) and for functionals of a Rademacher sequence --- see \cite{NPR10, Krok15, PrTo}. 
In particular, the product formula for multiple integrals on the Poisson space in its orthogonal form given explicitly by equation (19) of \cite{PZ1} is completely analogous to {\eqref{prodform}}, {as one can see by the change of variables} $k=p+q-t$ in \eqref{prodform}, and by replacing the indicator $\1_{\{p+q-r-l=k\}}$ in formula (18) of \cite{PZ1} with suitable conditions on the respective summation indices.
\item The product formula for non-symmetric and non-homogeneous Rademacher sequences in \cite[formula (5.3)]{PrTo} or, equivalently, in \cite[formula (2.4)]{Krok15} can be easily related to Proposition \ref{pform} and their similarity is quite striking. Note that, on the one hand, our formula is more general, in the sense that we allow for an arbitrary distribution of the underlying i.i.d. random variables whereas the formula in \cite{PrTo} is restricted to {discrete} multiple integrals which are functionals of a 
Rademacher sequence; on the other hand, the success parameters of the Rademacher sequences considered in \cite{PrTo} are allowed to vary and further the multiple integrals might depend on the whole infinite sequence.      
 \end{enumerate}
\end{remark}

In order to derive our main bounds, we will also make use of the following {elementary} lemmas.

\begin{lemma}\label{tele1}
For two positive integers $p,q\geq1$ assume that $\psi\in L^2(\mu^{\otimes p})$ and $\phi\in L^2(\mu^{\otimes q})$ are degenerate, symmetric kernels of orders $p$ and $q$, respectively. Then, for $t=1,\dotsc,2(p\wedge q)-1$, and with the kernels $\chi_{p+q-t}$ defined in \eqref{kerpf} we have 
\begin{align}\label{tl1}
\norm{\chi_{p+q-t}}_{L^2(\mu^{\otimes p+q-t})}&\leq\sum_{r=\ceil{\frac{t}{2}}}^{t\wedge p\wedge q}\binom{n-p-q+t}{t-r}\binom{p+q-t}{p-r,q-r,2r-t}\notag\\
&\hspace{4cm}\bigl\|\psi\star_r^{t-r}\phi\bigr\|_{L^2(\mu^{\otimes p+q-t})}\,.
\end{align}
\end{lemma}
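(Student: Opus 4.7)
The strategy is straightforward: apply Minkowski's inequality term-by-term to the decomposition \eqref{kerpf}, thereby reducing the claim to an estimate on each individual summand. Since the binomial and multinomial coefficients in \eqref{kerpf} are non-negative, the triangle inequality in $L^2(\mu^{\otimes p+q-t})$ yields
\begin{equation*}
\|\chi_{p+q-t}\|_{L^2(\mu^{\otimes p+q-t})} \leq \sum_{r=\ceil{t/2}}^{t\wedge p\wedge q} \binom{n-p-q+t}{t-r}\binom{p+q-t}{p-r,q-r,2r-t} \bigl\|(\widetilde{\psi\star_r^{t-r}\phi})_{p+q-t}\bigr\|_{L^2(\mu^{\otimes p+q-t})}\,,
\end{equation*}
so that it suffices to prove, for each admissible $r$, the inequality
\begin{equation*}
\bigl\|(\widetilde{\psi\star_r^{t-r}\phi})_{p+q-t}\bigr\|_{L^2(\mu^{\otimes p+q-t})} \leq \bigl\|\psi\star_r^{t-r}\phi\bigr\|_{L^2(\mu^{\otimes p+q-t})}\,.
\end{equation*}

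To verify this, I chain two contractivity statements. First, canonical symmetrization does not increase the $L^2$-norm of a square-integrable function, which is the content of \eqref{normftilde} applied to $\psi\star_r^{t-r}\phi$. Second, for any symmetric $f\in L^2(\mu^{\otimes m})$, the top-order Hoeffding component $f_m$ of $f$ (see \eqref{defpsis}) satisfies $\|f_m\|_{L^2(\mu^{\otimes m})}\leq \|f\|_{L^2(\mu^{\otimes m})}$. This second bound is a direct consequence of the $L^2(\mu^{\otimes m})$-orthogonality of the summands in the pointwise Hoeffding expansion indexed by distinct subsets $M\subseteq[m]$, which in turn follows from the degeneracy of the kernels $f_s$, $s\geq 1$; equivalently, one applies \eqref{varlb} to the centered kernel $f-\E[f]$ and adds back $(\E[f])^2$ to obtain the Pythagorean identity
$$\|f\|_{L^2(\mu^{\otimes m})}^2 = (\E[f])^2+\sum_{s=1}^m\binom{m}{s}\|f_s\|_{L^2(\mu^{\otimes s})}^2\,,$$
from which the desired estimate is immediate upon discarding all but the $s=m$ summand. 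Applying both contractivity bounds with $m=p+q-t$ and $f=\widetilde{\psi\star_r^{t-r}\phi}$ then delivers \eqref{tl1}.

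No substantive obstacle is anticipated: the argument reduces entirely to Minkowski's inequality and a double use of contractivity. The only case requiring explicit mention is when some contraction $\psi\star_r^{t-r}\phi$ fails to be square-integrable; in that event the right-hand side of \eqref{tl1} equals $+\infty$ and the bound holds trivially, so one may harmlessly assume $\psi\star_r^{t-r}\phi\in L^2(\mu^{\otimes p+q-t})$ throughout. Note, in particular, that no use is made of the product formula \eqref{prodform} itself; the lemma is purely an a priori size estimate on the kernels that appear in its statement.
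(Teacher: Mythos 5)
Your proof is correct and follows the same route as the paper's: Minkowski's inequality applied to the defining sum \eqref{kerpf}, then the contractivity of canonical symmetrization recorded in \eqref{normftilde}. In fact you are slightly more careful than the published argument, which cites only \eqref{normftilde} and silently passes from $\bigl\|\bigl(\widetilde{\psi\star_r^{t-r}\phi}\bigr)_{p+q-t}\bigr\|_{L^2(\mu^{\otimes p+q-t})}$ to $\bigl\|\widetilde{\psi\star_r^{t-r}\phi}\bigr\|_{L^2(\mu^{\otimes p+q-t})}$; your explicit appeal to the ANOVA/Pythagorean identity for Hoeffding components supplies exactly the intermediate step the paper elides.
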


\begin{proof}
From \eqref{kerpf} and \eqref{normftilde} we obtain 
\begin{align*}
&\norm{\chi_{p+q-t}}_{L^2(\mu^{\otimes p+q-t})}\\
&\leq\sum_{r=\ceil{\frac{t}{2}}}^{t\wedge p\wedge q}\binom{n-p-q+t}{t-r}\binom{p+q-t}{p-r,q-r,2r-t}\bigl\|\bigl(\widetilde{\psi\star_r^{t-r}\phi}\bigr)_{p+q-t}\bigr\|_{L^2(\mu^{\otimes p+q-t})}\notag\\
&\leq\sum_{r=\ceil{\frac{t}{2}}}^{t\wedge p\wedge q}\binom{n-p-q+t}{t-r}\binom{p+q-t}{p-r,q-r,2r-t}
\bigl\|\psi\star_r^{t-r}\phi\bigr\|_{L^2(\mu^{\otimes p+q-t})}\,,\notag
\end{align*}
which is the desired claim.
\end{proof}

\begin{lemma}\label{tele2}
Let $n,p,q,t,r$ be positive integers such that $n\geq p+q$ and $1\leq r\leq t\leq p+q-1$. Then, the inequality 
\begin{align*}
\frac{\sqrt{\binom{n}{p+q-t}}}{\sqrt{\binom{n}{p}}\sqrt{\binom{n}{q}}}\binom{n+t-p-q}{t-r}\binom{p+q-t}{p-r,q-r,2r-t}
&\leq C(p,q,t,r)\cdot n^{t/2 -r}
\end{align*}
is in order, where $C(p,q,t,r)$ is a suitable constant which only depends on $p,q,r$ and $t$.
\end{lemma}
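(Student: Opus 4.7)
The plan is to perform a direct power counting in $n$, using standard upper and lower bounds for the binomial coefficients that appear in the numerator and denominator. Treating $p, q, t, r$ as fixed constants throughout, the claim is essentially asymptotic in $n$, so the only real work is to produce explicit constants that are uniform in $n \geq p+q$.

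First I would recall the elementary inequality $\binom{n}{k} \leq \frac{n^k}{k!}$ for any $n \geq k \geq 0$, and apply it to the binomials appearing in the numerator of the left-hand side: since $n + t - p - q \leq n$ (because $t \leq p+q-1$), I get
\[
\sqrt{\binom{n}{p+q-t}} \leq \frac{n^{(p+q-t)/2}}{\sqrt{(p+q-t)!}}, \qquad \binom{n+t-p-q}{t-r} \leq \frac{n^{t-r}}{(t-r)!}.
\]
For the denominator, I would use a matching lower bound of the form $\binom{n}{k} \geq \frac{(n-k+1)^k}{k!}$, which for $n \geq 2(p+q)$ yields $\binom{n}{k} \geq \frac{(n/2)^k}{k!}$ when $k \in \{p,q\}$. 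Consequently,
\[
\sqrt{\binom{n}{p}}\sqrt{\binom{n}{q}} \geq \frac{n^{(p+q)/2}}{2^{(p+q)/2}\sqrt{p!\,q!}}.
\]

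Next I would assemble the three estimates. The multinomial coefficient $\binom{p+q-t}{p-r,q-r,2r-t}$ is a constant depending only on $p,q,t,r$, so it is absorbed into $C(p,q,t,r)$. Multiplying out, the power of $n$ on the left-hand side collapses to
\[
\tfrac{p+q-t}{2} \;-\; \tfrac{p+q}{2} \;+\; (t-r) \;=\; \tfrac{t}{2}-r,
\]
which matches the right-hand side exactly. Gathering all the factorial and power-of-two constants into a single $C(p,q,t,r)$ gives the stated bound for all $n \geq 2(p+q)$.

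Finally, for the finitely many remaining values $p+q \leq n < 2(p+q)$, the left-hand side is bounded by a finite constant depending only on $p,q,t,r$, while $n^{t/2-r}$ is bounded above and below by positive constants depending only on $p,q,t,r$; enlarging $C(p,q,t,r)$ if necessary therefore covers this range as well. The only mildly delicate step is producing the lower bound on $\binom{n}{p}\binom{n}{q}$ that is uniform in $n$, but this is handled by the dichotomy between the regime $n \geq 2(p+q)$ and the compact range below it; no obstacle of substance appears.
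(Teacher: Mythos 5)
Your proof is correct and is essentially the argument the paper alludes to with its one-line proof (``this immediately follows from the definition of multinomial coefficients''): you make explicit the power counting in $n$ via the standard sandwich $\frac{(n-k+1)^k}{k!} \leq \binom{n}{k} \leq \frac{n^k}{k!}$, verify that the exponents combine to $t/2 - r$, and dispose of the finitely many small $n$ by compactness. The treatment of the range $p+q \leq n < 2(p+q)$ is the right way to make the constant uniform in $n$; no gap.
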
 

\begin{proof}
 This immediately follows from the definition of multinomial coefficients.
\end{proof}

\section{Main Results in Dimension One}\label{1dim}

\subsection{Degenerate $U$-statistics}\label{degU}
{For the rest of this} section, we let $Z\sim {N}(0,1)$ denote a standard normal random variable, {and write $X = (X_1,...,X_n)$ to indicate a vector of i.i.d. random variables, with values in a space $(E, \mathcal{E})$ and common distribution $\mu$}. We also fix a degenerate, symmetric kernel $\psi=\psi(n)$ {-- possibly depending on the integer parameter $n$ --} of order $p\geq 1$ {(see Section \ref{ss:kernels} for definitions)}, and we assume that 
\[\E\bigl[\psi^4(X_1,\dotsc,X_p)\bigr]<\infty\,.\]
{Writing $J_p(\psi)$ to indicate the $U$-statistic defined in \eqref{e:sus}}, the degeneracy of the kernel immediately implies that 
\begin{equation*}
 \E\bigl[J_p(\psi)\bigr]=0\quad\text{and}\quad \sigma_n^2:=\Var\bigl(J_p(\psi)\bigr)=\binom{n}{p}\E\bigl[\psi^2(X_1,\dotsc,X_p)\bigr]\,.
\end{equation*}
We assume that $\sigma_n^2>0$ and denote by $\phi:=\phi_n$ the kernel defined via
\begin{equation*}
 \phi_n(x_1,\dotsc,x_p):=
 \sigma_n^{-1}\psi(x_1,\dotsc,x_p)\,,\quad (x_1,\dotsc,x_p)\in E^p,
\end{equation*}
and let 
\begin{equation}\label{e:unorm}
 W:=W_n:=\sigma_n^{-1} J_p(\psi)=J_p(\phi)=\sum_{J\in\D_p}\phi(X_j,j\in J)\,,
\end{equation}
where the set $\D_p$ is defined in \eqref{e:dp}. Of course, $\E[W]=0$, $\Var(W)=1$ and, by degeneracy,
\begin{equation*}
 W=\sum_{J\in\D_p}W_J\quad\text{with}\quad W_J:=\phi(X_j,j\in J)\,,\quad J\in\D_p\,,
\end{equation*}
is the Hoeffding decomposition of $W$, as defined in Section \ref{ss:hoeffding}. Since, {by assumption}, $W$ is a square-integrable $U$-statistic of order $p$, it is easy to see that $U:=W^2$ admits a Hoeffding decomposition {of the type \eqref{HDgen}, that we write (with obvious notation) as} 
\begin{equation*}
 U=\sum_{\substack{M\subseteq[n]: \abs{M}\leq 2p}} U_M\,;
\end{equation*}
{the explicit form of the Hoeffding decomposition of $U$ can be of course be deduced from Proposition \ref{pform}.}

\medskip

{ 
\begin{definition}{\rm

Given two real-valued random variables $X, Y$ we write 
$$
d_\W(X,Y) := \sup_{h \in {\rm Lip}(1)} \left | \E[h(X)] - \E[h(Y)]\right |,
$$
where ${\rm Lip}(1)$ is the class of all 1-Lipschitz mappings $h : \R\to \R$, to indicate the {\bf Wasserstein distance} between the distributions of $X$ and $Y$ (see \cite[Appendix C]{NouPecbook}, and the references therein, for some basic properties of this distance). 
}
\end{definition}
}

The following lemma is a simple consequence of the techniques {developed} in \cite{DP16}. An outline of its proof is given in Section \ref{proofs}.

\begin{lemma}\label{1dimlemma}
{Under the notation of the present section, one has the bound} 
\begin{align}\label{wb2}
 d_\W(W,Z)&\leq \Bigl(\sqrt{\frac{2}{\pi}}+\frac43\Bigr)\Bigl(\sum_{\substack{M\subseteq[n]: \abs{M}\leq 2p-1}}\Var(U_M)\Bigr)^{1/2} +\frac{2\sqrt{2}}{3}\frac{\sqrt{p\kappa_p}}{\sqrt{n}}\,,
\end{align}
where $\kappa_p$ is a finite constant which only depends on $p$.
\end{lemma}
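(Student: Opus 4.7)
The plan is to deduce Lemma \ref{1dimlemma} from the exchangeable-pair apparatus developed in \cite{DP16}. First, I would construct an exchangeable pair $(W, W')$ by the standard independent-coordinate swap: let $I$ be uniform on $[n]$, independent of $X$, and let $X^*$ be an independent copy of $X_1$; set $W'$ to be the evaluation of $J_p(\phi)$ on the vector obtained from $X$ by replacing $X_I$ by $X^*$. By the symmetry and the complete degeneracy of $\phi$, the pair $(W,W')$ satisfies the linear regression identity $\E[W - W' \mid X] = \lambda W$ with $\lambda := p/n$. Feeding $(W,W')$ into the Wasserstein-distance version of Stein's method for exchangeable pairs --- in the form used in \cite{DP16} to establish the quantitative de Jong theorem --- produces a bound of the schematic form
\begin{equation*}
d_\W(W, Z) \leq \Big(\sqrt{\tfrac{2}{\pi}} + \tfrac{4}{3}\Big)\, R_1 + \tfrac{2\sqrt{2}}{3}\, R_2,
\end{equation*}
where $R_1 := \big(\Var\big(\tfrac{1}{2\lambda}\E[(W-W')^2 \mid X]\big)\big)^{1/2}$ and $R_2 := \big((2\lambda)^{-2}\,\E[(W-W')^4]\big)^{1/2}$. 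The combined coefficient $\sqrt{2/\pi} + 4/3$ in front of $R_1$ reflects the two separate contributions arising from the first- and second-derivative estimates of the Stein solution for $1$-Lipschitz test functions, aggregated through Taylor expansion in the usual way.

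Next, I would control $R_1$ through a Hoeffding-decomposition analysis of $\tfrac{1}{2\lambda}\E[(W-W')^2 \mid X] = \tfrac{1}{2p}\sum_{i=1}^n \E[(W - W^{(i)})^2 \mid X]$, where $W^{(i)}$ denotes $W$ with $X_i$ replaced by $X^*$. Expanding each squared discrete gradient in the Hoeffding decomposition of $U = W^2$ (whose structure is given by Proposition \ref{pform} applied to the product of $W$ with itself) and exploiting the orthogonality of Hoeffding components, one shows that this quantity can be written as $1 + \sum_{M\,:\,1\leq |M|\leq 2p-1} c_M\, U_M$, with universal coefficients $c_M$ of modulus at most one. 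Crucially, the highest-order ($|M| = 2p$) Hoeffding components are annihilated by the averaging step, which is the key algebraic identity at the heart of de Jong's fourth-moment theorem. Consequently,
\begin{equation*}
R_1 \leq \Big(\sum_{M\subseteq[n]\,:\,|M|\leq 2p-1} \Var(U_M)\Big)^{1/2}.
\end{equation*}

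For $R_2$, I would expand $(W - W^{(I)})^4$ in terms of the kernel $\phi$ and average over $I$ together with the independent copy $X^*$, thereby reducing the estimation to sums of $L^2$-norms of contractions $\phi \star_r^l \phi$. By Lemma \ref{contrlemma}(iv) each such contraction is dominated by $\norm{\phi}_{L^4(\mu^{\otimes p})}^2$, which is finite by assumption, and a careful combinatorial accounting of the contributing index patterns yields $\E[(W-W')^4] \leq 4\kappa_p \lambda^2/n$ for some constant $\kappa_p$ depending only on $p$; hence $R_2 \leq \sqrt{p\kappa_p/n}$. Combining the two bounds gives \eqref{wb2}.

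The most delicate step is the Hoeffding-component analysis of $\tfrac{1}{2\lambda}\E[(W-W')^2 \mid X]$ used to control $R_1$: verifying that the order-$2p$ Hoeffding component vanishes, and that the remaining coefficients $c_M$ are uniformly bounded, rests on a combinatorial identity which is the one-dimensional specialization of the multivariate identities established in \cite{DP16}, and is the quantitative counterpart of de Jong's classical fourth-moment theorem. Once this identity is in hand, the estimate for $R_2$ is a routine (if somewhat involved) application of Proposition \ref{pform} together with Lemma \ref{contrlemma}(iv).
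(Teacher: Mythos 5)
Your exchangeable-pair construction, the linear regression identity $\E[W'-W\mid X]=-\tfrac{p}{n}W$, and your analysis of $R_1$ are exactly right: in particular you correctly identify, as the key algebraic fact, that the coefficients of the Hoeffding decomposition of $\tfrac{n}{2p}\E[(W'-W)^2\mid X]$ are $a_M = 1 - |M|/(2p)\in[0,1]$, so that $|M|=2p$ is annihilated and $R_1\leq\bigl(\sum_{|M|\leq 2p-1}\Var(U_M)\bigr)^{1/2}$. But your handling of $R_2$ contains a genuine gap, and your identification of where the coefficient $\sqrt{2/\pi}+4/3$ comes from is incorrect.

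First, the starting Stein bound (the paper's Theorem \ref{1dimplugin}) is
$d_\W(W,Z)\leq\sqrt{2/\pi}\,R_1+\tfrac{1}{3\lambda}\E|W'-W|^3$: the coefficient on $R_1$ is just $\sqrt{2/\pi}$, not $\sqrt{2/\pi}+4/3$. The $4/3$ does not arise from Taylor-expanding the Stein solution; it arises at the very end, because (after Cauchy--Schwarz) the cubic term reduces to $\tfrac{2\sqrt 2}{3}\bigl(\tfrac{n}{4p}\E[(W'-W)^4]\bigr)^{1/2}$, and this fourth-moment quantity is \emph{not} uniformly $O(p\kappa_p/n)$ but rather is bounded by $2\sum_{|M|\leq 2p-1}\Var(U_M)+p\kappa_p/n$. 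The extra term $2\sum\Var(U_M)$ under the square root contributes, after $\sqrt{a+b}\leq\sqrt a+\sqrt b$, the portion $\tfrac{2\sqrt 2}{3}\cdot\sqrt 2=\tfrac43$ that adds to $\sqrt{2/\pi}$.

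Second, and more seriously, your proposed route to $\E[(W'-W)^4]\lesssim\lambda^2/n$ via Lemma \ref{contrlemma}(iv) cannot work, because that lemma only bounds contractions in terms of $\|\phi\|_{L^4(\mu^{\otimes p})}^2$, and $\|\phi\|_{L^4}^2=\|\psi\|_{L^4}^2/\sigma_n^2$ is not controllable in terms of $n$ and $p$ alone: the ratio $\|\psi\|_{L^4}^2/\|\psi\|_{L^2}^2$ can be arbitrarily large. What you are implicitly assuming is precisely the Lindeberg-type condition that this lemma is designed to avoid. The actual argument in the paper uses two identities specific to the exchangeable-pair setup that you do not invoke: (a) the fourth-moment identity (Lemma 2.2 in \cite{DP16})
$\tfrac{n}{4p}\E[(W'-W)^4]=3\,\E\bigl[W^2\cdot\tfrac{n}{2p}\E[(W'-W)^2\mid X]\bigr]-\E[W^4]$,
which rewrites $\E[(W'-W)^4]$ in terms of $\E[W^4]$ and the Hoeffding coefficients $a_M$, and (b) the quantitative de Jong inequality (Lemma 2.10 in \cite{DP16}), $\sum_{|M|\leq 2p-1}\Var(U_M)\leq\E[W^4]-3+\kappa_p\,(p/n)$, which is used \emph{in reverse} to cancel the $3-\E[W^4]$ that identity (a) produces, leaving only $\kappa_p p/n$ plus the Hoeffding-variance sum. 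The constant $\kappa_p$ is entirely an artifact of this de Jong-type inequality, not of contraction estimates on $\phi\star_r^l\phi$. Without (a) and (b) you cannot avoid the Lindeberg hypothesis, and the stated lemma is false under your reasoning applied to a general $L^4$ kernel.
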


{We now state one of the main results of the paper. It corresponds to an explicit bound on the normal approximation of degenerate $U$-statistics, expressed in terms of contraction operators}.

\begin{theorem}\label{1dimbound}
With $W$ as defined above and with the constants $\kappa_p$ from Lemma \ref{1dimlemma} and $C(p,p,t,r)$ defined in Lemma \ref{tele2}, we have  
\begin{align}
d_\W(W,Z)&\leq \Bigl(\sqrt{\frac{2}{\pi}}+\frac43\Bigr)\sum_{t=1}^{2p-1}\sum_{r=\ceil{\frac{t}{2}}}^{t\wedge p} C(p,p,t,r)
\frac{\|\psi\star_r^{t-r}\psi\|_{L^2(\mu^{\otimes 2p-t})}}{\|\psi\|_{L^2(\mu^{\otimes p})}^2} n^{t/2-r}\notag\\
&\;+\frac{2\sqrt{2}}{3}\frac{\sqrt{p\kappa_p}}{\sqrt{n}}\label{1dimb1}\\
&\leq \frac{2\sqrt{2}}{3}\frac{\sqrt{p\kappa_p}}{\sqrt{n}}+\Bigl(\sqrt{\frac{2}{\pi}}+\frac43\Bigr)\biggl(\sum_{s=1}^{p-1}C(p,p,2s,s) \frac{\|\psi\star_s^{s}\psi\|_{L^2(\mu^{\otimes 2p-2s})}}{\|\psi\|_{L^2(\mu^{\otimes p})}^2}\notag\\
&\;+\frac{\|\psi\|_{L^4(\mu^{\otimes p})}^2}{\|\psi\|_{L^2(\mu^{\otimes p})}^2}\biggl(\sum_{s=1}^{p-1} \sum_{r=s+1}^{(2s)\wedge p} C(p,p,2s,r)n^{s-r}\notag\\
&\hspace{3cm}+\sum_{s=1}^p \sum_{r=s}^{(2s-1)\wedge p}C(p,p,2s-1,r)n^{s-r-1/2}\biggr)\biggr).\label{1dimb2}
\end{align}
\end{theorem}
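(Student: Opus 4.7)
The plan is to combine Lemma~\ref{1dimlemma} with the product formula of Proposition~\ref{pform} applied to $W^2 = J_p(\phi)^2$, where $\phi := \sigma_n^{-1}\psi$ is the normalized kernel (assuming $n \ge 2p$, as required by the product formula). The key observation is that the symmetric, degenerate summands produced by Proposition~\ref{pform} coincide automatically with the Hoeffding layers of $W^2$, so that $\sum_{|M|\le 2p-1}\Var(U_M)$ is converted into a sum of squared contraction norms.

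Specifically, Proposition~\ref{pform} with $q=p$ yields
\[
W^2 \;=\; \sum_{t=0}^{2p} J_{2p-t}(\chi_{2p-t}),
\]
where each $\chi_k$ is symmetric and degenerate of order $k$. For each $k = 1, \ldots, 2p$, uniqueness of the Hoeffding decomposition forces the summand of $J_k(\chi_k)$ indexed by $M \in \D_k$ to coincide $\Prob$-a.s.\ with $U_M$, so that orthogonality gives $\sum_{|M|=k}\Var(U_M) = \binom{n}{k}\norm{\chi_k}_{L^2(\mu^{\otimes k})}^2$. Together with $\Var(U_\emptyset) = 0$ and the subadditivity $\sqrt{\sum a_k}\le \sum \sqrt{a_k}$, this gives
\[
\Big(\sum_{|M|\le 2p-1}\Var(U_M)\Big)^{1/2} \;\le\; \sum_{t=1}^{2p-1}\sqrt{\binom{n}{2p-t}}\,\norm{\chi_{2p-t}}_{L^2(\mu^{\otimes 2p-t})}.
\]
Since $\phi\star_r^{t-r}\phi = \sigma_n^{-2}\,\psi\star_r^{t-r}\psi$ and $\sigma_n^2 = \binom{n}{p}\,\norm{\psi}_{L^2(\mu^{\otimes p})}^2$, invoking Lemma~\ref{tele1} on each $\chi_{2p-t}$ and then Lemma~\ref{tele2} (with $p = q$) on the resulting combinatorial prefactor transforms each summand into exactly
\[
C(p,p,t,r)\,\frac{\norm{\psi\star_r^{t-r}\psi}_{L^2(\mu^{\otimes 2p-t})}}{\norm{\psi}_{L^2(\mu^{\otimes p})}^2}\,n^{t/2-r}.
\]
Substituting in Lemma~\ref{1dimlemma} yields~\eqref{1dimb1}.

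The bound~\eqref{1dimb2} is obtained by regrouping the indices in~\eqref{1dimb1} according to the parity of $t$: for even $t = 2s$ with $1\le s\le p-1$, the ``diagonal'' index $r = s$ produces $n^{t/2-r}=1$ and the corresponding term $\norm{\psi\star_s^s\psi}_{L^2(\mu^{\otimes 2p-2s})}$ is kept as is, its finiteness being granted by Lemma~\ref{contrlemma}(v) without any $L^4$-hypothesis on $\psi$; all remaining contractions---those with $r > s$ for even $t$, and all of those for odd $t = 2s-1$ with $r\ge s$---are estimated via Lemma~\ref{contrlemma}(iv) by $\norm{\psi}_{L^4(\mu^{\otimes p})}^2$. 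Apart from careful bookkeeping between the product-formula indices $(t,r)$ and the Hoeffding index $k = 2p-t$, the main delicate point is checking that the normalization factor $\sigma_n^{-2}$, together with $\sqrt{\binom{n}{2p-t}}/\binom{n}{p}$, recombines \emph{exactly} into the polynomial rate $n^{t/2-r}$ supplied by Lemma~\ref{tele2}; the precise form $\sqrt{\binom{n}{p+q-t}}/(\sqrt{\binom{n}{p}}\sqrt{\binom{n}{q}})$ of that lemma is tailor-made for this recombination, so once the reduction to Proposition~\ref{pform} has been carried out, the remaining work is purely algebraic.
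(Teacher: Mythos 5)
Your proposal follows the same route as the paper's proof: invoke Lemma~\ref{1dimlemma}, apply Proposition~\ref{pform} to $W^2$ to express $\sum_{\abs{M}\le 2p-1}\Var(U_M)$ as $\sum_{t=1}^{2p-1}\binom{n}{2p-t}\norm{\chi_{2p-t}}^2_{L^2}$, then combine Lemma~\ref{tele1} and Lemma~\ref{tele2} to produce the factor $C(p,p,t,r)\,n^{t/2-r}$, and finally split the indices by the parity of $t$ and apply Lemma~\ref{contrlemma}(iv) off the diagonal to get \eqref{1dimb2}. The argument and the chain of lemmas are identical to those used in the paper, and the observations about the role of $\Var(U_\emptyset)=0$ and of Lemma~\ref{contrlemma}(v) on the diagonal are correct.
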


\begin{remark}\label{r:orders}{\rm {Fix $p$, and assume as before that the kernel $\psi=\psi(n)$ depends on the parameter $n$.} Then, as $n\to\infty$, the bound \eqref{1dimb2} is of the order
\begin{align*}
 &O\bigl(n^{-1/2}\|\psi\|_{L^4(\mu^{\otimes p})}^2\|\psi\|_{L^2(\mu^{\otimes p})}^{-2}\bigr)\notag\\
&\;+O\Bigl(\|\psi\|_{L^2(\mu^{\otimes p})}^{-2}\max_{1\leq s\leq p-1}\|\psi\star_s^{s}\psi\|_{L^2(\mu^{\otimes 2p-2s})}\Bigr)+O\bigl(n^{-1/2}\bigr),
\end{align*}
whereas the bound \eqref{1dimb1} {behaves asymptotically as}
\begin{align*}
 &O\Biggl(\|\psi\|_{L^2(\mu^{\otimes p})}^{-2}\max_{1\leq r\leq p-1}\|\psi\star_r^{r}\psi\|_{L^2(\mu^{\otimes 2p-2r})}\Biggr)+O\bigl(n^{-1/2}\bigr)\\
&\; +O\Biggl(\|\psi\|_{L^2(\mu^{\otimes p})}^{-2}\max_{\substack{1\leq r\leq p\,,\\0\leq l\leq r-1}}n^{(l-r)/2}\|\psi\star_r^{l}\psi\|_{L^2(\mu^{\otimes 2p-r-l})}\Biggr)\,.
\end{align*}
}
\end{remark}

{The asymptotic relations pointed out in Remark \ref{r:orders} immediately yield the following one-dimensional CLT}.

\begin{corollary}\label{1dimcor}
Let $p$ be a fixed positive integer and, for each $n\geq p$, let $\psi(n)\in L^4(\mu^{\otimes p})$ be a symmetric and degenerate kernel with respect to the probability measure $\mu$ such that $\|\psi(n)\|_{L^2(\mu^{\otimes p})}>0$. Let $X_1,X_2,\dotsc$ be i.i.d. random variables on $(\Omega,\F,\Prob)$ with {common} distribution $\mu$ and, for $n\geq p$, let $W_n$ be the normalized random variable {obtained from}
\[J_p(\psi(n)):=\sum_{1\leq i_1<\dotsc<i_p\leq n}\psi(n)(X_{i_1},\dotsc,X_{i_p})\,,\]
{ according to the definition \eqref{e:unorm}}. Assume that the following conditions {\normalfont (i)} and {\normalfont (ii)} are satisfied:
\begin{enumerate}[{\normalfont (i)}]
\item For all $1\leq s\leq p-1$, one has that $\displaystyle \lim_{n\to\infty}\frac{\|\psi(n)\star_s^{s}\psi(n)\|_{L^2(\mu^{\otimes 2p-2s})}}{\|\psi(n)\|_{L^2(\mu^{\otimes p})}^2}=0$.
\item We have that $\displaystyle \lim_{n\to\infty}\frac{1}{\sqrt{n}}\frac{\|\psi(n)\|^2_{L^4(\mu^{\otimes p})}}{\|\psi(n)\|^2_{L^2(\mu^{\otimes p})}}=0$.
\end{enumerate}
Then, as $n\to\infty$, $W_n$ converges in distribution to $Z\sim N(0,1)$.
\end{corollary}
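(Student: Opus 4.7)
The strategy is to apply Theorem \ref{1dimbound} in the form \eqref{1dimb2} and verify that, under hypotheses (i) and (ii), each of the three blocks appearing on the right-hand side tends to $0$ as $n\to\infty$. Since convergence in Wasserstein distance implies convergence in distribution, this will yield the conclusion.

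First, the initial summand $\frac{2\sqrt 2}{3}\sqrt{p\kappa_p/n}$ does not depend on the kernel and vanishes as $n\to\infty$, since $p$ and $\kappa_p$ are fixed constants. Second, the sum
\[
\sum_{s=1}^{p-1}C(p,p,2s,s)\,\frac{\|\psi(n)\star_s^{s}\psi(n)\|_{L^2(\mu^{\otimes 2p-2s})}}{\|\psi(n)\|_{L^2(\mu^{\otimes p})}^2}
\]
is a finite sum (since $p$ is fixed) in which each term tends to $0$ by assumption (i); hence the whole sum vanishes.

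The heart of the matter is the remaining block
\[
\frac{\|\psi(n)\|_{L^4(\mu^{\otimes p})}^2}{\|\psi(n)\|_{L^2(\mu^{\otimes p})}^2}\left(\sum_{s=1}^{p-1}\sum_{r=s+1}^{(2s)\wedge p} C(p,p,2s,r)\,n^{s-r} + \sum_{s=1}^{p}\sum_{r=s}^{(2s-1)\wedge p} C(p,p,2s-1,r)\,n^{s-r-1/2}\right).
\]
I would inspect the exponents of $n$ in the two inner sums. In the first sum the summation index satisfies $r\ge s+1$, so $n^{s-r}\le n^{-1}$; in the second, $r\ge s$ forces $n^{s-r-1/2}\le n^{-1/2}$. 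Both are finite sums with coefficients independent of $n$, so the bracketed quantity is $O(n^{-1/2})$. Multiplied by $\|\psi(n)\|_{L^4}^2/\|\psi(n)\|_{L^2}^2$, this entire block is $O\bigl(n^{-1/2}\|\psi(n)\|_{L^4}^2/\|\psi(n)\|_{L^2}^2\bigr)$, which tends to $0$ by assumption (ii).

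Summing the three contributions we conclude $d_\W(W_n,Z)\to 0$, which in turn gives $W_n\Rightarrow Z$. There is no real obstacle here: the corollary is essentially a direct reading of Theorem \ref{1dimbound}, and the only care required is to verify that all exponents of $n$ in the off-diagonal contraction block are at most $-1/2$, so that the global control provided by (ii) suffices.
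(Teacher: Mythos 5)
Your proposal is correct and takes essentially the same route as the paper: the authors derive Corollary \ref{1dimcor} as an immediate consequence of the asymptotic order relations recorded in Remark \ref{r:orders}, which are obtained by exactly the exponent-counting you perform on the right-hand side of \eqref{1dimb2} — namely $r\ge s+1$ gives $n^{s-r}\le n^{-1}$ and $r\ge s$ gives $n^{s-r-1/2}\le n^{-1/2}$, so the whole off-diagonal block is $O\bigl(n^{-1/2}\|\psi(n)\|_{L^4}^2\|\psi(n)\|_{L^2}^{-2}\bigr)$ and is killed by hypothesis (ii), while hypothesis (i) kills the diagonal contraction block.
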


\begin{remark}\label{1dimrem}
\begin{enumerate}[(a)]
\item The statement of Corollary \ref{1dimcor} is in fact an extension of a CLT by Hall \cite{Hall84} to general $p$. 
Indeed, {in this reference} it is proved that, with the above notation for $p=2$, the CLT for $W_n$, $n\in\N$, holds, whenever 
$$ \lim_{n\to\infty}\frac{1}{n}\frac{\E\bigl[\psi(n)^4(X_1,X_2)\bigr]}{\bigl(\E\bigl[\psi(n)^2(X_1,X_2)\bigr]\bigr)^2}=0\quad\text{and}\quad 
\lim_{n\to\infty}\frac{\|\psi(n)\star_1^{1}\psi(n)\|_{L^2(\mu^{\otimes 2})}}{\|\psi(n)\|_{L^2(\mu^{\otimes 2})}^{2}} =0\,.$$
Note that our bound \eqref{1dimb2} even gives a precise estimate of the error of normal approximation in this situation.
\item If, in the situation of Corollary \ref{1dimcor}, the kernel $\psi=\psi(n)$ is fixed, i.e. it does not depend on $n$, then { our results imply a CLT} if and only if $p=1$, i.e., if we are dealing with a sum of i.i.d. random variables. 
This is in accordance with classical results about the non-Gaussian fluctuations of degenerate $U$-statistics with a fixed kernel of order $p\geq2$ \cite{Greg77, Serf80, DynMan83, LRP2}.
\end{enumerate}
\end{remark}

\begin{proof}[Proof of Theorem \ref{1dimbound}]
{ We will apply} Lemma \ref{1dimlemma}. { Our goal is therefore} to effectively bound from above the quantity 
\begin{equation*}
 \sum_{\substack{M\subseteq[n]: \abs{M}\leq 2p-1}}\Var(U_M),
\end{equation*}
in terms of the kernel function $\psi$. From Proposition \ref{pform}, we deduce that 
\begin{align*}
 \sum_{\abs{M}\leq 2p-1}\Var(U_M)&=\sum_{s=0}^{2p-1}\Var\Biggl(\sum_{\abs{M}=s} U_M\Biggr)\\
&= \sum_{t=1}^{2p} \Var\bigl(J_{2p-t}(\chi_{2p-t})\bigr)=\sum_{t=1}^{2p-1} \Var\bigl(J_{2p-t}(\chi_{2p-t})\bigr)\,,
\end{align*}
where 
\begin{equation*}
 \chi_{2p-t}=\sum_{r=\ceil{\frac{t}{2}}}^{t\wedge p}\binom{n-2p+t}{t-r}\binom{2p-t}{p-r,p-r,2r-t}\bigl(\widetilde{\phi\star_r^{t-r}\phi}\bigr)_{2p-t}\,.
\end{equation*}

Using \eqref{varJp}, \eqref{varlb} as well as Lemmas \ref{tele1} and \ref{tele2}, for a fixed $t\in\{1,\dotsc,2p-1\}$, we obtain that 
\begin{align}\label{1de1}
 &\Bigl(\Var\bigl(J_{2p-t}(\chi_{2p-t})\bigr)\Bigr)^{1/2}=\Biggl(\binom{n}{2p-t}\Var\bigl(\chi_{2p-t}(X_1,\dotsc,X_{2p-t})\bigr)\Biggr)^{1/2}\notag\\
 &=\sqrt{\binom{n}{2p-t}}\|\chi_{2p-t}\|_{L^2(\mu^{\otimes 2p-t})}\notag\\
 &\leq\sqrt{\binom{n}{2p-t}}\sum_{r=\ceil{\frac{t}{2}}}^{t\wedge p}\binom{n-2p+t}{t-r}\binom{2p-t}{p-r,p-r,2r-t}
\|\phi\star_r^{t-r}\phi\|_{L^2(\mu^{\otimes 2p-t})}\notag\\
 &=\frac{\sqrt{\binom{n}{2p-t}}}{\binom{n}{p}\|\psi\|_{L^2(\mu^{\otimes p})}^2}\sum_{r=\ceil{\frac{t}{2}}}^{t\wedge p}\binom{n-2p+t}{t-r}\binom{2p-t}{p-r,p-r,2r-t}\|\psi\star_r^{t-r}\psi\|_{L^2(\mu^{\otimes 2p-t})}\notag\\
&\leq \frac{1}{\|\psi\|_{L^2(\mu^{\otimes p})}^2}\sum_{r=\ceil{\frac{t}{2}}}^{t\wedge p} C(p,p,t,r)
\|\psi\star_r^{t-r}\psi\|_{L^2(\mu^{\otimes 2p-t})} n^{t/2-r}\,.
\end{align}
Using Lemma \ref{contrlemma}-(iv)  for $\frac{t}{2}<r\leq t\wedge p$ and distinguishing the cases of even and odd values of $t$, we thus infer the chain of inequalities 
\begin{align}
\Bigl(\sum_{\abs{M}\leq 2p-1}\Var(U_M)\Bigr)^{1/2}&\leq \sum_{t=1}^{2p-1} \Bigl(\Var\bigl(J_{2p-t}(\chi_{2p-t})\bigr)\Bigr)^{1/2}\notag\\
&\leq \sum_{t=1}^{2p-1}\sum_{r=\ceil{\frac{t}{2}}}^{t\wedge p} C(p,p,t,r)
\frac{\|\psi\star_r^{t-r}\psi\|_{L^2(\mu^{\otimes 2p-t})}}{\|\psi\|_{L^2(\mu^{\otimes p})}^2} n^{t/2-r}\label{1de2}\\
&\leq \sum_{s=1}^{p-1}C(p,p,2s,s) \frac{\|\psi\star_s^{s}\psi\|_{L^2(\mu^{\otimes 2p-2s})}}{\|\psi\|_{L^2(\mu^{\otimes p})}^2}\notag\\
&\; +\sum_{s=1}^{p-1} \sum_{r=s+1}^{(2s)\wedge p} C(p,p,2s,r)\frac{\|\psi\|_{L^4(\mu^{\otimes p})}^2}{\|\psi\|_{L^2(\mu^{\otimes p})}^2}\,n^{s-r}\notag\\
&\;+\sum_{s=1}^p \sum_{r=s}^{(2s-1)\wedge p}C(p,p,2s-1,r)\frac{\|\psi\|_{L^4(\mu^{\otimes p})}^2}{\|\psi\|_{L^2(\mu^{\otimes p})}^2}\,n^{s-r-1/2}\,.\label{1de3}
\end{align}
The bounds \eqref{1dimb1} and \eqref{1dimb2} now follow from Lemma \ref{1dimlemma} and from the bounds \eqref{1de2} 
and \eqref{1de3}, respectively.
\end{proof}


\subsection{$U$-statistics with a dominant component}\label{genU}
In this subsection we drop the restriction that the kernel $\psi$ be degenerate, { and we obtain quantitative CLTs under the assumptions that one of the terms in the Hoeffding decomposition is dominant in the large sample limit $n\to \infty$. The reason why we treat this case separately from the general results of Section \ref{general} is that, by virtue of the one-dimensional results of the previous section, we are able to obtain explicit bounds in the Wasserstein distance. The theory developed in Section \ref{general} will hinge on multidimensional results involving smooth distances, and will therefore yield bounds for more regular test functions.} 

\medskip

We now assume that $\psi=\psi(n)$ is a symmetric kernel of a fixed order $1\leq p\leq n$ such that 
\[0<\E\bigl[\psi^4(X_1,\dotsc,X_p)\bigr]<\infty\,.\]
Denote by 
\begin{equation*}
 J_p(\psi)=\E\bigl[J_p(\psi)\bigr]+\sum_{s=1}^p \binom{n-s}{p-s} J_s(\psi_s)
\end{equation*}
the Hoeffding decopmposition \eqref{HDsym} of $J_p(\psi)$ with symmetric and degenerate kernels $\psi_s$ of order $s$ which automatically satisfy 
\[\E\bigl[\psi_s^4(X_1,\dotsc,X_s)\bigr]<\infty\,,\]
$s=1,\dotsc,p$. This can be easily seen from their explicit construction. Let us further assume w.l.o.g. that $\E[J_p(\psi)]=0$ and that $\|\psi\|^2_{L^2(\mu^{\otimes p})}=\Var(\psi(X_1,\dotsc,X_p))=1$. We then define 
\begin{align}\label{Hrank}
 m&:=\min\{1\leq s\leq p\,:\,\Var\bigl(\psi_s(X_1,\dotsc,X_s)\bigr)\not=0\}\notag\\
 &\,=\min\{1\leq k\leq p\,:\,\Var\bigl(g_k(X_1,\dotsc,X_k)\bigr)\not=0\}
\end{align}
to be the so-called \textbf{order of degeneracy} or \textbf{Hoeffding rank} of $J_p(\psi)$. The second equality in \eqref{Hrank} easily follows from \eqref{defpsis} and \eqref{psis2}.
Let 
\begin{equation*}
 \sigma_m^2:=\Var\Biggl(\binom{n-m}{p-m}J_m(\psi_m)\Biggr)=\binom{n-m}{p-m}^2\binom{n}{m}\|\psi_m\|_{L^2(\mu^{\otimes m})}^{2}
\end{equation*}
as well as 
\begin{equation*}
 W:=\sigma_m^{-1}J_p(\psi)=\frac{J_m(\psi_m)}{\binom{n}{m}^{1/2}\|\psi_m\|_{L^2(\mu^{\otimes m})}}+\sigma_m^{-1}\sum_{s=m+1}^p\binom{n-s}{p-s} J_s(\psi_s)=:Y+R\,
\end{equation*}
{ (note that $W,Y,R$ all implicitly depend on $n$)}. We provide the following bound on the Wasserstein distance between the law of $W$ and the standard normal distribution,
which is useful whenever the random variable $Y$ { (that is, the first non-trivial Hoeffding component of $W$)} is dominant and $R$ is negligible.

\begin{theorem}\label{1dimth2}
Under the above assumption, one has the estimates 
\begin{align*}
d_\W(W,Z)&\leq d_\W(Y,Z)+\sum_{s=m+1}^p\frac{\sqrt{m!}(p-m)!\|\psi_s\|_{L^2(\mu^{\otimes s})}}{\sqrt{s!}(p-s)!\|\psi_m\|_{L^2(\mu^{\otimes m})}}n^\frac{m-s}{2}\\
&\leq d_\W(Y,Z)+\sum_{s=m+1}^p\frac{\sqrt{m!}(p-m)!}{\sqrt{p!}\sqrt{(p-s)!}\|\psi_m\|_{L^2(\mu^{\otimes m})}}n^\frac{m-s}{2}\,,
\end{align*}
and suitable bounds on $d_\W(Y,Z)$ are provided by Theorem \ref{1dimbound}.
\end{theorem}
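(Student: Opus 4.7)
My plan for Theorem \ref{1dimth2} is to control $W = Y + R$ by splitting the Wasserstein distance via the triangle inequality and then applying Theorem \ref{1dimbound} to the dominant piece. Since
\[
d_\W(W,Z) \leq d_\W(W,Y) + d_\W(Y,Z),
\]
and since the rescaled variable $Y = J_m(\psi_m)/\bigl(\binom{n}{m}^{1/2}\|\psi_m\|_{L^2(\mu^{\otimes m})}\bigr)$ is exactly the normalized symmetric degenerate $U$-statistic of order $m$ associated with $\psi_m$, the second summand is already handled by Theorem \ref{1dimbound}. For the first summand, I would use the elementary fact that, for every $1$-Lipschitz $h:\R\to\R$, $|h(Y+R) - h(Y)|\leq |R|$, so $d_\W(W,Y)\leq \E|R|$. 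Because each $\psi_s$ is degenerate we have $\E[J_s(\psi_s)]=0$, so $\E[R]=0$ and Jensen's inequality gives $\E|R|\leq \sqrt{\Var(R)}$.

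The next step is to compute $\Var(R)$ using orthogonality of Hoeffding components. Since $\psi_s$ and $\psi_{s'}$ are degenerate of distinct orders whenever $s\neq s'$, the $U$-statistics $\{J_s(\psi_s): s=m+1,\dotsc,p\}$ are pairwise orthogonal in $L^2(\Prob)$ (the same mechanism underlying \eqref{varJp}), and therefore
\[
\Var(R) = \sigma_m^{-2}\sum_{s=m+1}^p \binom{n-s}{p-s}^2\binom{n}{s}\|\psi_s\|^2_{L^2(\mu^{\otimes s})}.
\]
The elementary bound $\sqrt{\sum a_s^2}\leq \sum a_s$ valid for non-negative $a_s$ then yields the term-by-term estimate
\[
\sqrt{\Var(R)}\;\leq\;\sum_{s=m+1}^p \frac{\binom{n-s}{p-s}\sqrt{\binom{n}{s}}}{\binom{n-m}{p-m}\sqrt{\binom{n}{m}}}\,\frac{\|\psi_s\|_{L^2(\mu^{\otimes s})}}{\|\psi_m\|_{L^2(\mu^{\otimes m})}}.
\]

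The main combinatorial step, which I expect to be the most delicate part of the argument, is to rewrite the binomial ratio in the form appearing in the statement. Using the identity $\binom{n-s}{p-s}\binom{n}{s}=\binom{n}{p}\binom{p}{s}$ and cancelling factorials, one obtains
\[
\frac{\binom{n-s}{p-s}\sqrt{\binom{n}{s}}}{\binom{n-m}{p-m}\sqrt{\binom{n}{m}}}\;=\;\frac{\sqrt{m!}\,(p-m)!}{\sqrt{s!}\,(p-s)!}\,\sqrt{\frac{(n-s)!}{(n-m)!}},
\]
after which one dominates $\sqrt{(n-s)!/(n-m)!}$ by $n^{(m-s)/2}$ using that the falling factorial $(n-m)(n-m-1)\cdots(n-s+1)$ is of order $n^{s-m}$. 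Substituting back produces the first claimed inequality.

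Finally, the second inequality follows from the first by invoking the analysis-of-variance lower bound \eqref{varlb}: under the normalization $\|\psi\|^2_{L^2(\mu^{\otimes p})} = \Var\bigl(\psi(X_1,\dotsc,X_p)\bigr)=1$ assumed in the statement, it yields $\binom{p}{s}\|\psi_s\|^2_{L^2(\mu^{\otimes s})}\leq 1$, i.e.\ $\|\psi_s\|_{L^2(\mu^{\otimes s})}\leq \sqrt{s!(p-s)!/p!}$; plugging this into the first inequality gives the simpler form. No other ingredient beyond Theorem \ref{1dimbound} (applied to $Y$) is needed to conclude.
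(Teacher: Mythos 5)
Your argument and the paper's proof coincide step for step: triangle inequality plus $d_\W(W,Y)\leq\E\abs{R}\leq\sqrt{\Var(R)}$, orthogonality of Hoeffding components to compute $\Var(R)$, the elementary bound $\sqrt{\sum_s a_s^2}\leq\sum_s a_s$, a factorial simplification of the binomial ratio, and \eqref{varlb} for the second inequality. The only differences are presentational (the paper expands $\binom{n-s}{p-s}^2\binom{n}{s}\big/\binom{n-m}{p-m}^2\binom{n}{m}$ directly rather than via $\binom{n-s}{p-s}\binom{n}{s}=\binom{n}{p}\binom{p}{s}$), and both treatments pass from the exact factor $(n-p+1)^{(m-s)/2}$ to $n^{(m-s)/2}$ at the very end in the same slightly cavalier way.
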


\begin{proof}
Using the simple inequality 
\begin{equation*}
 d_\W(W,Z)\leq d_\W(Y,Z)+\sqrt{\Var(R)}
\end{equation*}
as well as 
\begin{align*}
 &\sigma_m^{-2}\Var\Biggl(\binom{n-s}{p-s}J_s(\psi_s)\Biggr)=\frac{\binom{n-s}{p-s}^2\binom{n}{s}\|\psi_s\|_{L^2(\mu^{\otimes s})}^{2}}{\binom{n-m}{p-m}^2\binom{n}{m}\|\psi_m\|_{L^2(\mu^{\otimes m})}^{2}}\\
 &=\frac{m!((p-m)!)^2\|\psi_s\|_{L^2(\mu^{\otimes s})}^{2}}{s!((p-s)!)^2\|\psi_m\|_{L^2(\mu^{\otimes m})}^{2}}\frac{1}{(n-m)\cdot(n-m-1)\cdot\ldots\cdot(n-s+1)}\\
 &\leq \frac{m!((p-m)!)^2\|\psi_s\|_{L^2(\mu^{\otimes s})}^{2}}{s!((p-s)!)^2\|\psi_m\|_{L^2(\mu^{\otimes m})}^{2}}(n-p+1)^{m-s}
\end{align*}
we obtain that 
\begin{align*}
 d_\W(W,Z)&\leq d_\W(Y,Z)+\Biggl(\sum_{s=m+1}^p\frac{m!((p-m)!)^2\|\psi_s\|_{L^2(\mu^{\otimes s})}^{2}}{s!((p-s)!)^2\|\psi_m\|_{L^2(\mu^{\otimes m})}^{2}}(n-p+1)^{m-s}\Biggr)^{1/2}\\
 &\leq d_\W(Y,Z)+\sum_{s=m+1}^p\frac{\sqrt{m!}(p-m)!\|\psi_s\|_{L^2(\mu^{\otimes s})}}{\sqrt{s!}(p-s)!\|\psi_m\|_{L^2(\mu^{\otimes m})}}(n-p+1)^\frac{m-s}{2}\\
 &\leq d_\W(Y,Z)+\sum_{s=m+1}^p\frac{\sqrt{m!}(p-m)!\|\psi_s\|_{L^2(\mu^{\otimes s})}}{\sqrt{s!}(p-s)!\|\psi_m\|_{L^2(\mu^{\otimes m})}}n^\frac{m-s}{2}\,.
\end{align*}
This is the first bound stated in the Theorem. The second one follows immediately from this one and from \eqref{varlb} since 
\[\|\psi_s\|^2_{L^2(\mu^{\otimes s})}\leq\frac{s!(p-s)!}{p!}\|\psi\|^2_{L^2(\mu^{\otimes p})}=\frac{s!(p-s)!}{p!}\,.\]
\end{proof}

Combined with Theorem \ref{1dimbound}, Theorem \ref{1dimth2} yields the following CLT.

\begin{corollary}\label{1dclt2}
Let $p$ be a fixed positive integer and, for each $n\geq p$, let $\psi(n)\in L^4(\mu^{\otimes p})$ be a symmetric kernel such that $\int_{E^p}\psi\, d\mu^{\otimes p}=0$ and $\|\psi(n)\|_{L^2(\mu^{\otimes p})}>0$. 
Let $X_1,X_2,\dotsc$ be i.i.d. random variables on $(\Omega,\F,\Prob)$ with distribution $\mu$ and, for $n\geq p$, denote by $m=m_n$ the Hoeffding rank of the $U$-statistic
\[J_p(\psi(n)):=\sum_{1\leq i_1<\dotsc<i_p\leq n}\psi(n)(X_{i_1},\dotsc,X_{i_p})=\sum_{s=0}^p \binom{n-s}{p-s} J_s(\psi(n)_s)\] 
and let 
\begin{equation*}
 \sigma_{m_n}^2:=\Var\Biggl(\binom{n-m_n}{p-m_n}J_{m_n}(\psi(n)_{m_n})\Biggr)=\binom{n-m_n}{p-m_n}^2\binom{n}{m_n}
\|\psi(n)_{m_n}\|_{L^2(\mu^{\otimes m_n})}^{2}\,.
\end{equation*}
Then, with $W_n:=\sigma_{m_n}^{-1}J_p(\psi(n))$, $n\geq p$, assume that the following conditions {\normalfont (i)}, {\normalfont (ii)} and {\normalfont (iii)} are satisfied:
\begin{enumerate}[{\normalfont (i)}]
\item We have $\displaystyle \lim_{n\to\infty}\max_{1\leq s\leq m_n-1}\frac{\|\psi(n)_{m_n}\star_s^{s}\psi(n)_{m_n}\|_{L^2(\mu^{\otimes 2m_n-2s})}}{\|\psi(n)_{m_n}\|_{L^2(\mu^{\otimes m_n})}^2}=0$.
\item We have $\displaystyle \lim_{n\to\infty}\frac{1}{\sqrt{n}}\frac{\|\psi(n)_{m_n}\|^2_{L^4(\mu^{\otimes m_n})}}{\|\psi(n)_{m_n}\|_{L^2(\mu^{\otimes m_n})}}=0$.
\item We have $\displaystyle \lim_{n\to\infty}\frac{\|\psi(n)\|_{L^2(\mu^{\otimes p})}}{\sqrt{n}\|\psi(n)_{m_n}\|_{L^2(\mu^{\otimes m_n})}}=0$.
\end{enumerate}
Then, as $n\to\infty$, $W_n$ converges in distribution to $Z\sim N(0,1)$.\\
If the Hoeffding rank $m=m_n$ does in fact not depend on $n$, then {\normalfont(iii)} can be replaced with the weaker condition 
\begin{enumerate}
 \item[{\normalfont(iii)$^\prime$}] For all $s=m+1,\dotsc,p$: $\displaystyle\lim_{n\to\infty} n^{\frac{m-s}{2}}\frac{\|\psi(n)_s\|_{L^2(\mu^{\otimes s})}}{\|\psi(n)_{m}\|_{L^2(\mu^{\otimes m})}}=0$.
\end{enumerate}

\end{corollary}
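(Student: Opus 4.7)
The plan is to reduce the statement to the two ingredients already at our disposal: Theorem \ref{1dimth2}, which controls the Wasserstein distance between $W_n$ and its dominant Hoeffding component $Y_n:=J_{m_n}(\psi(n)_{m_n})/(\binom{n}{m_n}^{1/2}\|\psi(n)_{m_n}\|_{L^2(\mu^{\otimes m_n})})$, plus a remainder term, and Theorem \ref{1dimbound}, which provides a Wasserstein bound for the normalized degenerate $U$-statistic $Y_n$. Writing $W_n = Y_n + R_n$ in accordance with the notation of Theorem \ref{1dimth2} and using the triangle inequality $d_\W(W_n, Z) \leq d_\W(Y_n, Z) + \sqrt{\Var(R_n)}$, it suffices to show that both terms vanish as $n\to\infty$.

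For the first term, I apply Theorem \ref{1dimbound} to $Y_n$, which is a normalized symmetric degenerate $U$-statistic of order $m_n$ based on the kernel $\psi(n)_{m_n}$. The key observation is that, although $m_n$ may vary with $n$, it takes values in the finite set $\{1,\dotsc,p\}$, so the constants $C(m_n,m_n,t,r)$ and $\kappa_{m_n}$ appearing in bound \eqref{1dimb2} are uniformly bounded by their maximum over $m\in\{1,\dotsc,p\}$. The pure contraction terms $\|\psi(n)_{m_n}\star_s^s\psi(n)_{m_n}\|_{L^2(\mu^{\otimes 2m_n-2s})}/\|\psi(n)_{m_n}\|_{L^2(\mu^{\otimes m_n})}^2$ for $1\leq s\leq m_n-1$ vanish by condition (i), after taking a maximum. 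The remaining mixed contraction terms (those with $l<r$, contributing factors $n^{s-r}$ or $n^{s-r-1/2}$ with $s-r\leq -1/2$) are controlled by Lemma \ref{contrlemma} (iv) in the form $\|\psi(n)_{m_n}\star_r^l\psi(n)_{m_n}\|_{L^2}\leq \|\psi(n)_{m_n}\|_{L^4(\mu^{\otimes m_n})}^2$, and their contribution to \eqref{1dimb2} vanishes by condition (ii) together with the $n^{-1/2}$ prefactor. Hence $d_\W(Y_n, Z) \to 0$.

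For the second term, I follow the computation inside the proof of Theorem \ref{1dimth2}, which gives
\begin{equation*}
\sigma_{m_n}^{-2}\Var\Bigl(\tbinom{n-s}{p-s}J_s(\psi(n)_s)\Bigr) \leq \frac{m_n!((p-m_n)!)^2}{s!((p-s)!)^2}\cdot\frac{\|\psi(n)_s\|_{L^2(\mu^{\otimes s})}^2}{\|\psi(n)_{m_n}\|_{L^2(\mu^{\otimes m_n})}^2}\,(n-p+1)^{m_n-s}
\end{equation*}
for each $s\in\{m_n+1,\dotsc,p\}$. Applying the variance bound \eqref{varlb} to $\psi(n)$ yields $\|\psi(n)_s\|_{L^2(\mu^{\otimes s})}^2\leq \frac{s!(p-s)!}{p!}\|\psi(n)\|_{L^2(\mu^{\otimes p})}^2$, and since $m_n-s\leq -1$, the right-hand side above is bounded by a constant (depending only on $p$) times $\|\psi(n)\|_{L^2(\mu^{\otimes p})}^2/(n\|\psi(n)_{m_n}\|_{L^2(\mu^{\otimes m_n})}^2)$. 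Summing over $s=m_n+1,\dotsc,p$ and taking square roots, $\sqrt{\Var(R_n)}$ is dominated by a constant times $\|\psi(n)\|_{L^2(\mu^{\otimes p})}/(\sqrt{n}\,\|\psi(n)_{m_n}\|_{L^2(\mu^{\otimes m_n})})$, which vanishes by condition (iii).

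For the case in which $m_n = m$ is constant in $n$, I use the first bound of Theorem \ref{1dimth2} directly (without invoking \eqref{varlb}), so that $\sqrt{\Var(R_n)}$ is dominated by $\sum_{s=m+1}^p \frac{\sqrt{m!}(p-m)!}{\sqrt{s!}(p-s)!}\,\|\psi(n)_s\|_{L^2(\mu^{\otimes s})}\|\psi(n)_m\|_{L^2(\mu^{\otimes m})}^{-1}n^{(m-s)/2}$; each summand vanishes by condition (iii)$'$. The main subtlety of the argument, and the only genuine obstacle, is justifying the use of Theorem \ref{1dimbound} when $m_n$ depends on $n$: this is resolved by the uniform boundedness of the combinatorial constants in \eqref{1dimb2} on the finite set $\{1,\dotsc,p\}$, as described above; all remaining estimates are straightforward consequences of the finite-sample bounds already established in the paper.
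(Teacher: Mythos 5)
Your argument is correct and reproduces the route the paper intends: decompose $W_n = Y_n + R_n$, control $\sqrt{\Var(R_n)}$ via Theorem~\ref{1dimth2} and condition (iii) (resp.~(iii)$'$ in the fixed-rank case), and control $d_\W(Y_n,Z)$ via Theorem~\ref{1dimbound} applied to the leading degenerate component, exploiting that $m_n$ ranges over the finite set $\{1,\dotsc,p\}$ so the combinatorial constants $C(m_n,m_n,t,r)$ and $\kappa_{m_n}$ are bounded uniformly in $n$. One caveat worth flagging: the $L^4$ part of the bound~\eqref{1dimb2} applied to $Y_n$ scales as $n^{-1/2}\,\|\psi(n)_{m_n}\|_{L^4(\mu^{\otimes m_n})}^2\,\|\psi(n)_{m_n}\|_{L^2(\mu^{\otimes m_n})}^{-2}$, i.e.~with \emph{two} powers of the $L^2$-norm in the denominator, so the argument actually requires condition (ii) with $\|\psi(n)_{m_n}\|_{L^2(\mu^{\otimes m_n})}^2$ in the denominator, matching the analogous condition in Corollary~\ref{1dimcor}; the squared exponent appears to have been dropped by a typo in the printed statement of Corollary~\ref{1dclt2}, and your writeup silently uses the corrected form.
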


{ Again,} if we are dealing with a fixed kernel $\psi$ { not depending on $n$}, then also $m$ does not depend on $n$ and we obtain asymptotic normality of $W_n$ if and only if $m=1$. As observed before, such a phenomenon is consistent with classical results about the asymptotic distribution of $U$-statistics, see e.g. \cite{Hoeffding, Greg77, Serf80, DynMan83}.

\section{Multivariate Results}\label{mult}
{ Our goal in this section is to deduce explicit multidimensional bounds for vectors of degenerate $U$-statistics. As in the previous section, we denote by $X = (X_1,...,X_n)$ ($n\geq 1$) a vector of i.i.d. random variables, with values in $(E, \mathcal{E})$ and with distribution $\mu$}. 

\medskip

\subsection{ Setup}\label{ss:multisetup}

We start by fixing a positive integer $d$ and, for $1\leq i\leq d$, we let $\psi^{(i)}=\psi^{(n,i)}$ be a degenerate and symmetric kernel of order $1\leq p_i\leq n$ { (as before, the tacit dependence of the kernels on the sample size $n$ will be omitted whenever there is no risk of confusion)}. 
We will again assume that $\psi^{(i)}\in L^4(\mu^{\otimes p_i})$ and, for $1\leq i\leq d$, define 
\[\phi^{(i)}:=\phi^{(n,i)}:=\frac{\psi^{(i)}}{\sqrt{\binom{n}{p_i}}}\quad\text{as well as}\]
\begin{equation*}
 \sigma_n(i)^2:=\Var\bigl(J_{p_i}(\phi^{(i)})\bigr)=\binom{n}{p_i}\E\bigl[\bigl(\phi^{(i)}\bigr)^2(X_1,\dotsc,X_{p_i})\bigr]=\|\psi^{(n,i)}\|^2_{L^2(\mu^{\otimes p})}\,.
\end{equation*}
For $i=1,\dotsc,d$ write
\begin{equation*}
 W(i):=J_{p_i}(\phi^{(i)})
\end{equation*}
and let 
\begin{equation*}
 W:=\bigl(W(1),\dotsc,W(d)\bigr)^T\,.
\end{equation*}
 Without loss of generality, we can assume that $p_i\leq p_k$ whenever $1\leq i<k\leq d$.
Thus, there is an $s\in\{1,\dotsc,d\}$ as well as positive integers $1\leq d_1<d_2<\dotsc <d_s=d$ and $1\leq q_1<q_2<\ldots<q_s$ such that 
\begin{equation*}
 p_i=q_l\quad\text{for all}\quad i\in\{d_{l-1}+1,\dotsc,d_l\}\quad\text{and all}\quad l=1,\dotsc,s\,,
\end{equation*}
where $d_0:=0$. We also let 
\begin{equation*}
 v_{i,k}:=\Cov\bigl(W(i),W(k)\bigr)=\E\bigl[W(i)W(k)\bigr]\,,\quad 1\leq i\leq k\leq d\,,
\end{equation*}
and 
\[\V =\V(W) :=\Cov(W)=(v_{i,k})_{1\leq i,k\leq d}\,.\]
Note that $v_{i,i}=\sigma_n(i)^2$ for $i=1,\dotsc,r$ and $\abs{v_{i,k}}\leq\sigma_n(i)\sigma_n(k)$ for $1\leq i,k\leq d$, by the Cauchy-Schwarz inequality. 
Note also that, by degeneracy of the kernels, $v_{i,k}=0$ unless $p_i=p_k$. Hence, $\V$ is a block diagonal matrix. 
Throughout this section we denote by 
\begin{equation*}
Z=\bigl(Z(1),\dotsc,Z(d)\bigr)^T\sim N_d(0,\V)
\end{equation*}
a centered Gaussian vector with covariance matrix $\V$. Furthermore, for $1\leq i,k\leq d$, we denote by 
\begin{equation*}
 W(i)W(k)=\sum_{\substack{M\subseteq[n]: \abs{M}\leq p_i+p_k}} U_M(i,k)
\end{equation*}
the Hoeffding decomposition of $W(i)W(k)$; { similarly to the situation of the previous section, the explicit form of the random variables $U_M(i,k)$ can be deduced from Proposition \ref{pform}.}

\subsection{ Generalities on matrix norms and related estimates}\label{ss:matnorms}

For a vector $x=(x_1,\dotsc,x_d)^T\in\R^d$ we denote by $\Enorm{x}$ its \textit{Euclidean norm} and for a matrix $A\in\R^{d\times d}$ we let $\Opnorm{A}$ be
the \textbf{operator norm} induced by the Euclidean norm, i.e., 
\[\Opnorm{A}:= \sup\{ \Enorm{Ax}\,:\ \Enorm{x} = 1\}\,.\]
More generally, for any $k$-multilinear form $\psi:(\R^d)^k\rightarrow\R$, $k\in\N$, we define its \textbf{ (generalized) operator norm} as
\[\Opnorm{\psi}:=\sup\left\{\abs{\psi(u_1,\ldots,u_k)}\,:\, u_j\in\R^d,\, \Enorm{u_j}=1,\, j=1,\ldots,k\,\right\}.\]
Recall that for a function $h:\R^d\rightarrow\R$, its minimum Lipschitz constant $M_1(h)$ is given by
\[M_1(h):=\sup_{x\not=y}\frac{\abs{h(x)-h(y)}}{\Enorm{x-y}}\in[0,\infty)\cup\{\infty\}.\]
If, for instance, $h$ is differentiable, then it is easy to see that $$M_1(h)=\sup_{x\in\R^d}\Opnorm{Dh(x)}$$ 
If, more generally, $k\geq1$ and if $h:\R^d\rightarrow\R$ is a $(k-1)$-times differentiable function, then we let
\[M_k(h):=\sup_{x\not=y}\frac{\Opnorm{D^{k-1}h(x)-D^{k-1}h(y)}}{\Enorm{x-y}}\,,\]
thus viewing the $(k-1)$-th derivative $D^{k-1}h$ of $h$ at any point $x\in\R^d$ as a $(k-1)$-multilinear form.
Then, if $h$ is actually $k$-times differentiable, we have $M_k(h)=\sup_{x\in\R^d}\Opnorm{D^kh(x)}$. 
Thus, for $k=0$, we also define  $M_0(h):=\fnorm{h}$.

\medskip

Recall that the \textbf{Hilbert-Schmidt inner product} of two matrices $A,B\in\R^{d\times d}$ is defined by 
\begin{eqnarray*}
\langle A,B\rangle_{\HS}:=\Tr\bigl(AB^T\bigr)=\Tr\bigl(BA^T\bigr)=\Tr\bigl(B^TA\bigr)=\sum_{i,j=1}^d a_{ij}b_{ij}\,.
\end{eqnarray*}
Thus, $\langle\cdot,\cdot\rangle_{\HS}$ is just the standard inner product on $\R^{d\times d}\cong \R^{d^2}$.
The corresponding \textbf{Hilbert-Schmidt norm} will be denoted by $\HSnorm{\cdot}$.  
With this notion at hand, and following \cite{ChaMe08} and \cite{Meck09}, for $k=2$ we finally define 
\[\tilde{M}_2(h):=\sup_{x\in\R^d}\HSnorm{\Hess h(x)}\,,\]
with $\Hess h$ being the \textit{Hessian matrix} of $h$. Then, we have the inequality 
\begin{equation}\label{M2bound}
 \tilde{M}_2(h)\leq \sqrt{d} M_2(h)\,.
\end{equation}

\subsection{ Main results}\label{ss:mainmulti}

The next lemma is the multivariate counterpart to Lemma \ref{1dimlemma} and, as the latter, relies on the methods provided 
in the recent paper \cite{DP16}. Its proof is sketched in Section \ref{proofs}.

\begin{lemma}\label{mdimlemma}
 Under the { assumptions of Section \ref{ss:multisetup}}, the following holds. There are constants $\kappa_{p_i}\in(0,\infty)$, only depending on $p_i$, $1\leq i\leq d$, such that:
 \begin{enumerate}[{\normalfont(i)}]
  \item For any $h\in C^3(\R^d)$ such that $\E\bigl[\abs{h(W)}\bigr]<\infty$ and $\E\bigl[\abs{h(Z)}\bigr]<\infty$, 
\begin{align*}
&\babs{\E[h(W)]-\E[h(Z)]}\leq \frac{1}{4p_1}\tilde{M}_2(h)\sum_{i,k=1}^d(p_i+p_k)\biggl(\sum_{\substack{M\subseteq[n]:\\\abs{M}\leq p_i+p_k-1}}\Var\bigl(U_M(i,k)\bigr)\biggr)^{1/2}\\
&\;+\frac{2M_3(h)\sqrt{d}}{9p_1}\sum_{i=1}^d p_i\sigma_n(i)\biggl(\sum_{\substack{M\subseteq[n]:\\\abs{M}\leq 2p_i-1}}\Var\bigl(U_M(i,i)\bigr)\biggr)^{1/2}\\
&\;+\frac{\sqrt{2d}M_3(h)}{9p_1\sqrt{n}}\sum_{i=1}^d p_i^{3/2}\sigma_n(i)^3\sqrt{\kappa_{p_i}}\,.
\end{align*}
\item If, moreover, $\V$ is positive definite, then for each $h\in C^2(\R^d)$ such that\\ $\E\bigl[\abs{h(W)}\bigr]<\infty$ and $\E\bigl[\abs{h(Z)}\bigr]<\infty$, 
\begin{align*}
&\babs{\E[h(W)]-\E[h(Z)]}\\
&\leq  \frac{M_1(h)\Opnorm{\V^{-1/2}}}{p_1\sqrt{2\pi}}\sum_{i,k=1}^d(p_i+p_k)\biggl(\sum_{\substack{M\subseteq[n]:\\\abs{M}\leq p_i+p_k-1}}\Var\bigl(U_M(i,k)\bigr)\biggr)^{1/2}\\
&\;+\frac{\sqrt{2\pi d}}{6p_1}M_2(h)\Opnorm{\V^{-1/2}}\sum_{i=1}^d p_i\sigma_n(i)\biggl(\sum_{\substack{M\subseteq[n]:\\\abs{M}\leq 2p_i-1}}\Var\bigl(U_M(i,i)\bigr)\biggr)^{1/2}\\
&\;+\frac{\sqrt{\pi d}}{6p_1\sqrt{n}}M_2(h)\Opnorm{\V^{-1/2}}\sum_{i=1}^d p_i^{3/2}\sigma_n(i)^3\sqrt{\kappa_{p_i}}\,.
\end{align*}
\end{enumerate}
\end{lemma}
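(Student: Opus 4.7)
The plan is to adapt the one-dimensional argument underlying Lemma \ref{1dimlemma} to the multivariate setting, combining Stein's method of exchangeable pairs in dimension $d$ (in the form due to Chatterjee–Meckes \cite{ChaMe08} and Meckes \cite{Meck09}) with the product formula of Proposition \ref{pform}. First I would construct the natural exchangeable pair $W' = (W'(1),\ldots,W'(d))^T$ by sampling an index $I$ uniformly from $[n]$, drawing an independent copy $X_I^\ast$ of $X_I$, and replacing $X_I$ by $X_I^\ast$ in each $U$-statistic $W(i)$. A short computation, exactly as in \cite{DP16}, combined with the degeneracy of every kernel $\phi^{(i)}$, yields the linearity relation
\[
\E\bigl[W'(i) - W(i) \,\big|\, W\bigr] = -\frac{p_i}{n}\, W(i), \qquad i=1,\ldots,d,
\]
so that the exchangeable pair $(W,W')$ satisfies the standard Stein-type identity with the block-diagonal coefficient matrix $\Lambda = \operatorname{diag}(p_1/n,\ldots,p_d/n)$.

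Next I would insert this pair into the multivariate exchangeable-pair bounds. For part (i), applied to test functions $h\in C^3$, this produces an estimate of the form
\[
\babs{\E h(W) - \E h(Z)} \leq \tfrac{1}{4}\,\tilde M_2(h)\sum_{i,k=1}^d \frac{1}{\lambda_i + \lambda_k}\sqrt{\Var\Bigl(\E\bigl[(W'(i)-W(i))(W'(k)-W(k))\mid W\bigr]\Bigr)} + (\text{third moment term}),
\]
with a corresponding bound for part (ii), in which one trades $\tilde M_2(h)$ for $M_1(h)\Opnorm{\V^{-1/2}}$ and $M_3(h)$ for $M_2(h)\Opnorm{\V^{-1/2}}$ via Meckes' Gaussian-smoothing argument. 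The factor $(\lambda_i+\lambda_k)^{-1} = n/(p_i+p_k)$ is responsible for the $(p_i+p_k)/p_1$ pattern and for the $p_i/p_1$ pattern appearing in the statement.

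The core computation, and the one step where all of the combinatorics is concentrated, is to identify the conditional cross-difference. Exactly as in the one-dimensional case treated in \cite{DP16}, one shows by a direct index-counting argument that
\[
\frac{n}{p_i + p_k}\,\E\bigl[(W'(i)-W(i))(W'(k)-W(k))\mid W\bigr] = \E\bigl[W(i)W(k) \mid \mathcal{F}_n\bigr] + R_{i,k},
\]
where the remainder $R_{i,k}$ contributes only to Hoeffding components of total order at most $p_i + p_k - 1$. Expanding $W(i)W(k)$ via its Hoeffding decomposition and exploiting orthogonality, the top component of order $p_i+p_k$ is deterministic and equal to $v_{i,k}$; subtracting this cancels exactly the Gaussian target covariance, and all that survives is the sum $\sum_{|M|\leq p_i+p_k-1}U_M(i,k)$. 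Taking variances and using the $L^2$-orthogonality of the $U_M(i,k)$ then gives precisely the term $\bigl(\sum_{|M|\leq p_i+p_k-1}\Var(U_M(i,k))\bigr)^{1/2}$ appearing in the statement. Identification of these Hoeffding components of $W(i)W(k)$ is where Proposition \ref{pform} enters: it expresses $W(i)W(k)$ as a sum of degenerate $U$-statistics of orders $p_i+p_k-t$, $0\le t\le 2(p_i\wedge p_k)$, and thereby makes the argument fully parallel to the one-dimensional case.

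For the third-moment terms I would estimate $\E\bigl[|W'(i)-W(i)|^3\bigr]$ using essentially verbatim the combinatorial bookkeeping of \cite{DP16}: the difference $W'(i)-W(i)$ is itself a $U$-statistic-type expression involving kernel evaluations at subsets containing the resampled index, and standard symmetry arguments yield a bound of the form $\frac{\sqrt{\kappa_{p_i}}}{\sqrt{n}}\,p_i^{3/2}\sigma_n(i)^3$ after multiplication by the Stein factor $(2\lambda_i)^{-1}$. Summing over $i$ and combining with the cross-difference estimate delivers both (i) and (ii); the dimensional factor $\sqrt{d}$ in (i) comes from the inequality \eqref{M2bound}, while the factor $\Opnorm{\V^{-1/2}}$ and the constants $1/\sqrt{2\pi}$, $\sqrt{\pi/2}$ in (ii) are the explicit Meckes constants from the regularized bound. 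The main obstacle I anticipate is keeping the conditional-variance bookkeeping clean when $p_i \neq p_k$: the block-diagonal structure of $\V$ and of the $\lambda_i$'s must be respected so that cross-block errors collapse correctly, and care is needed to avoid introducing spurious $\Opnorm{\V^{-1/2}}$ losses when inverting the regularized Stein equation on mixed-order blocks.
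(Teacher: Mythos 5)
Your overall route is the one the paper actually follows: build the exchangeable pair by resampling a uniformly chosen coordinate, establish the linear regression $\E[W'-W\mid X]=-\Lambda W$ with $\Lambda=\diag(p_1/n,\dotsc,p_d/n)$, feed this into a Meckes-type multivariate exchangeable-pair bound, and control the conditional covariance matrix and the third/fourth moments using the combinatorial identities of \cite{DP16} (and Proposition~\ref{pform} for the Hoeffding decomposition of $W(i)W(k)$). The structure is correct and the final estimates would come out as stated. However, your description of the ``core computation'' contains two genuine errors that must be repaired before the constants are justified.

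First, you write that the top Hoeffding component of $W(i)W(k)$, of order $p_i+p_k$, is ``deterministic and equal to $v_{i,k}$.'' That is backwards: the deterministic component is $U_\emptyset(i,k)=\E[W(i)W(k)]=v_{i,k}$, which has order $0$; the order-$(p_i+p_k)$ components $\sum_{\abs{M}=p_i+p_k}U_M(i,k)$ are genuinely random (the $U$-statistic $J_{p_i+p_k}(\chi_{p_i+p_k})$ from Proposition~\ref{pform}). Second, the asserted identity
\[
\frac{n}{p_i+p_k}\,\E\bigl[(W'(i)-W(i))(W'(k)-W(k))\mid W\bigr]
= \E\bigl[W(i)W(k)\mid\F_n\bigr]+R_{i,k},
\]
with $R_{i,k}$ of Hoeffding order $\leq p_i+p_k-1$, cannot be right, because $\E[W(i)W(k)\mid\F_n]=W(i)W(k)$ contains the top-order terms while the left-hand side does not. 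What Lemma~3.3 of \cite{DP16} actually gives is
\[
n\,\E\bigl[(W'(i)-W(i))(W'(k)-W(k))\mid X\bigr]
=\sum_{\substack{M\subseteq[n]:\\\abs M\leq p_i+p_k-1}}(p_i+p_k-\abs M)\,U_M(i,k),
\]
i.e.\ the conditional cross-difference is itself of Hoeffding order at most $p_i+p_k-1$, killing the top components of $W(i)W(k)$ entirely. The factor $(p_i+p_k)$ in the stated bound then comes from crudely bounding the coefficient $(p_i+p_k-\abs M)$ by $p_i+p_k$, not from a symmetrization $(\lambda_i+\lambda_k)^{-1}$ of $\Lambda$; in the paper's invocation of the Meckes-type theorem one simply uses $\Opnorm{\Lambda^{-1}}=n/p_1$ as a scalar prefactor. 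Once these two points are corrected, the rest of your outline (centering of $S$ via block-diagonality of $\V$, Cauchy--Schwarz on $\E\HSnorm{S}$, H\"{o}lder plus the fourth-moment identity for the third-moment term, and the $\sqrt d$ coming from \eqref{M2bound}) matches the paper's argument.
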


We next state our main multivariate normal approximation theorem, { and some more notation is needed for the sake of readability}. For $1\leq i,k\leq d$, we define 

\begin{align*}
A_1(i,k,n)&:= \sum_{t=1}^{p_i+p_k-1}\sum_{r=\ceil{\frac{t}{2}}}^{t\wedge p_i\wedge p_k} C(p_i,p_k,t,r)
\|\psi^{(i)}\star_r^{t-r}\psi^{(k)}\|_{L^2(\mu^{\otimes p_i+p_k-t})}\;n^{t/2-r}\\
&=\sum_{r=1}^{p_i\wedge p_k}\sum_{l=0}^{(p_i+p_k-r-1)\wedge r}C(p_i,p_k,l+r,r)\|\psi^{(i)}\star_r^{l}\psi^{(k)}\|_{L^2(\mu^{\otimes p_i+p_k-r-l})}\;n^{\frac{l-r}{2}}
\end{align*}
as well as
\begin{align*}
&A_2(i,k,n):=\sum_{s=1}^{\ceil{\frac{p_i+p_k}{2}}-1}\biggl(C(p_i,p_k,2s,s)
\|\psi^{(i)}\star_s^s\psi^{(k)}\|_{L^2(\mu^{\otimes p_i+p_k-2s})}\\
&\;+\|\psi^{(i)}\|_{L^4(\mu^{\otimes p_i})}\|\psi^{(k)}\|_{L^4(\mu^{\otimes p_k})}\sum_{r=s+1}^{(2s)\wedge p_i\wedge p_k} 
C(p_i,p_k,2s,r)\;n^{s-r}\biggr)\\
&\;+\|\psi^{(i)}\|_{L^4(\mu^{\otimes p_i})}\|\psi^{(k)}\|_{L^4(\mu^{\otimes p_k})}\sum_{s=1}^{\floor{\frac{p_i+p_k}{2}}}\sum_{r=s}^{(2s-1)\wedge p_i\wedge p_k} C(p_i,p_k,2s-1,r)\;n^{s-r-1/2}\,,
\end{align*}
where the constants $C(p,q,t,r)$ have been defined in Lemma \ref{tele2}. { As indicated in the statement below, each of the two estimates appearing in Theorem \ref{mdimbound} hold when either $A_1$ or $A_2$ is plugged on the right-hand side -- the key to this phenomenon being the subsequent Lemma \ref{preplemma}}.

\begin{theorem}\label{mdimbound}
With the above notation and assumptions, { the following estimates hold}.
\begin{enumerate}[{\normalfont (i)}]
\item For any $h\in C^3(\R^d)$ such that $\E\bigl[\abs{h(W)}\bigr]<\infty$ and $\E\bigl[\abs{h(Z)}\bigr]<\infty$ and for $j=1,2$ we have 
\begin{align*}
\babs{\E[h(W)]-\E[h(Z)]}&\leq \frac{1}{4p_1}\tilde{M}_2(h)\sum_{i,k=1}^d(p_i+p_k) A_j(i,k,n) \\
&\;+\frac{2M_3(h)\sqrt{d}}{9p_1}\sum_{i=1}^d p_i\|\psi^{(n,i)}\|_{L^2(\mu^{\otimes p})} A_j(i,i,n)\\
&\;+\frac{\sqrt{2d}M_3(h)}{9p_1\sqrt{n}}\sum_{i=1}^d p_i^{3/2}\|\psi^{(n,i)}\|^3_{L^2(\mu^{\otimes p})}\sqrt{\kappa_{p_i}}\,.
\end{align*}
\item If, moreover, $\V$ is positive definite, then for each $h\in C^2(\R^d)$ such that\\ $\E\bigl[\abs{h(W)}\bigr]<\infty$ and $\E\bigl[\abs{h(Z)}\bigr]<\infty$  and for $j=1,2$ we have 
\begin{align*}
\babs{\E[h(W)]-\E[h(Z)]}&\leq  \frac{M_1(h)\Opnorm{\V^{-1/2}}}{p_1\sqrt{2\pi}}\sum_{i,k=1}^d(p_i+p_k)A_j(i,k,n)\\
&\;+\frac{\sqrt{2\pi d}}{6p_1}M_2(h)\Opnorm{\V^{-1/2}}\sum_{i=1}^d p_i\|\psi^{(n,i)}\|_{L^2(\mu^{\otimes p})} A_j(i,i,n)\\
&\;+\frac{\sqrt{\pi d}}{6p_1\sqrt{n}}M_2(h)\Opnorm{\V^{-1/2}}\sum_{i=1}^d p_i^{3/2}\|\psi^{(n,i)}\|^3_{L^2(\mu^{\otimes p})}\sqrt{\kappa_{p_i}}\,.
\end{align*}
\end{enumerate}
\end{theorem}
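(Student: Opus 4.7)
The plan is to reduce the theorem to an essentially one-dimensional computation by bounding, for each pair $(i,k)\in\{1,\dotsc,d\}^2$, the quantity
\[V(i,k) := \Biggl(\sum_{\substack{M\subseteq[n]:\\ \abs{M}\leq p_i+p_k-1}}\Var\bigl(U_M(i,k)\bigr)\Biggr)^{1/2}\]
appearing in both estimates of Lemma \ref{mdimlemma}, and then substituting the resulting bounds back into that lemma. The key ingredient will be Proposition \ref{pform} applied to the product $W(i)W(k) = J_{p_i}(\phi^{(i)})J_{p_k}(\phi^{(k)})$, combined with Lemmas \ref{tele1} and \ref{tele2}, in the very same spirit as the proof of Theorem \ref{1dimbound}.

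Since both $\phi^{(i)}$ and $\phi^{(k)}$ are degenerate and symmetric, Proposition \ref{pform} yields the explicit Hoeffding decomposition
\[W(i)W(k) = \sum_{t=0}^{2(p_i\wedge p_k)} J_{p_i+p_k-t}(\chi_{p_i+p_k-t}),\]
where each kernel $\chi_{p_i+p_k-t}$ is symmetric, degenerate, and built from the contractions $\phi^{(i)}\star_r^{t-r}\phi^{(k)}$ via formula \eqref{kerpf}. By $L^2(\Prob)$-orthogonality of distinct levels of the Hoeffding decomposition, removing the $t=0$ term (which contains the maximal level $\abs{M}=p_i+p_k$) gives
\[V(i,k)^2 = \sum_{t=1}^{2(p_i\wedge p_k)}\Var\bigl(J_{p_i+p_k-t}(\chi_{p_i+p_k-t})\bigr) = \sum_{t=1}^{2(p_i\wedge p_k)}\binom{n}{p_i+p_k-t}\norm{\chi_{p_i+p_k-t}}^2_{L^2(\mu^{\otimes p_i+p_k-t})}.\]

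Applying Lemma \ref{tele1} to control $\norm{\chi_{p_i+p_k-t}}_{L^2}$ by a sum of contraction norms $\norm{\phi^{(i)}\star_r^{t-r}\phi^{(k)}}_{L^2}$, then Lemma \ref{tele2} to absorb the combinatorial prefactors $\sqrt{\binom{n}{p_i+p_k-t}}\binom{n-p_i-p_k+t}{t-r}\binom{p_i+p_k-t}{p_i-r,p_k-r,2r-t}$ into $C(p_i,p_k,t,r)\cdot n^{t/2-r}$, and finally converting back via $\phi^{(i)}\star_r^l\phi^{(k)} = \bigl(\binom{n}{p_i}\binom{n}{p_k}\bigr)^{-1/2}\psi^{(i)}\star_r^l\psi^{(k)}$ together with the subadditivity of the square root, will give (after reindexing the inner sum through $l:=t-r$) the bound $V(i,k) \leq A_1(i,k,n)$. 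Plugging this estimate into the two parts of Lemma \ref{mdimlemma} settles the case $j=1$ of both items (i) and (ii).

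To derive the case $j=2$, I would split the outer sum defining $A_1(i,k,n)$ according to the parity of $t$ and, within each piece, distinguish the ``diagonal'' contribution $l=r$ (equivalently $t=2r$) from the ``off-diagonal'' contributions $l<r$. The diagonal term $t=2s$, $r=s$ yields exactly the norm $\norm{\psi^{(i)}\star_s^s\psi^{(k)}}_{L^2(\mu^{\otimes p_i+p_k-2s})}$, which is retained as it stands, while every off-diagonal term (including the entire odd-$t$ range, where $l=t-r<r$ holds automatically) can be estimated via Lemma \ref{contrlemma}(iv) by the product $\norm{\psi^{(i)}}_{L^4(\mu^{\otimes p_i})}\norm{\psi^{(k)}}_{L^4(\mu^{\otimes p_k})}$. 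Collecting the resulting contributions reproduces the expression $A_2(i,k,n)$. No genuinely new conceptual difficulty arises relative to the one-dimensional proof of Theorem \ref{1dimbound}; the main source of friction will be the bookkeeping required to keep track of the two independent indices $i,k$ and the careful accounting of the combinatorial factors coming from Lemmas \ref{tele1} and \ref{tele2}, but the structural backbone of the argument is otherwise identical.
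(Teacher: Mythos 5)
Your argument is correct and follows exactly the route the paper takes: the paper packages the estimate $\bigl(\sum_{|M|\le p_i+p_k-1}\Var(U_M(i,k))\bigr)^{1/2}\le A_1(i,k,n)\le A_2(i,k,n)$ into Lemma \ref{preplemma}, whose proof is precisely your chain of Proposition \ref{pform}, Hoeffding orthogonality, Lemmas \ref{tele1} and \ref{tele2}, and Lemma \ref{contrlemma}-(iv) for the off-diagonal/odd-$t$ contributions, and then Theorem \ref{mdimbound} is obtained by plugging this into Lemma \ref{mdimlemma}. You have simply merged those two steps into one narrative; no conceptual difference.
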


For the proof of Theorem \ref{mdimbound} we will need the following preparatory result.

\begin{lemma}\label{preplemma}
For all $1\leq i,k\leq d$, one has the estimates
\begin{align}\label{e:twoest}
 \biggl(\sum_{\substack{M\subseteq[n]:\\\abs{M}\leq p_i+p_k-1}}\Var\bigl(U_M(i,k)\bigr)\biggr)^{1/2}&\leq A_1(i,k,n)\leq A_2(i,k,n)\,.
\end{align}
\end{lemma}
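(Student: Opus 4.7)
The plan is to mimic, in the bivariate setting, the chain of estimates \eqref{1de1}--\eqref{1de3} used in the proof of Theorem \ref{1dimbound}, based on the product formula of Proposition \ref{pform} and on Lemmas \ref{tele1} and \ref{tele2}.

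For the first inequality, I would start by applying Proposition \ref{pform} to $W(i)W(k)=J_{p_i}(\phi^{(i)})J_{p_k}(\phi^{(k)})$ (which is legitimate because $\phi^{(i)}$ and $\phi^{(k)}$ are degenerate and symmetric), obtaining the Hoeffding-type expansion
\[
W(i)W(k)=\sum_{t=0}^{2(p_i\wedge p_k)}J_{p_i+p_k-t}(\chi_{p_i+p_k-t}),
\]
where each $\chi_{p_i+p_k-t}$ is the degenerate symmetric kernel given by \eqref{kerpf} with $\psi,\phi$ replaced by $\phi^{(i)},\phi^{(k)}$. Uniqueness and orthogonality of the Hoeffding decomposition then yield $\sum_{|M|=s}U_M(i,k)=J_s(\chi_s)$; since the only summand with $|M|=p_i+p_k$ (the $t=0$ term) is being excluded on the left-hand side of \eqref{e:twoest}, this gives
\[
\sum_{|M|\leq p_i+p_k-1}\Var\bigl(U_M(i,k)\bigr)=\sum_{t=1}^{p_i+p_k-1}\Var\bigl(J_{p_i+p_k-t}(\chi_{p_i+p_k-t})\bigr),
\]
where terms with $t>2(p_i\wedge p_k)$ vanish. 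The inequality $\sqrt{\sum_t a_t^2}\leq\sum_t|a_t|$ then reduces everything to bounding the individual summands. Using $\Var(J_s(\chi_s))=\binom{n}{s}\|\chi_s\|_{L^2(\mu^{\otimes s})}^{2}$ (from \eqref{varJp} together with the symmetry and degeneracy of $\chi_s$), followed by Lemma \ref{tele1} for the triangle-type bound on $\|\chi_{p_i+p_k-t}\|_{L^2}$, and Lemma \ref{tele2} for the binomial prefactor, and finally inserting the rescaling $\phi^{(j)}=\psi^{(j)}/\sqrt{\binom{n}{p_j}}$ (whose normalization cancels exactly against the factor $\sqrt{\binom{n}{p_i+p_k-t}}$ inside Lemma \ref{tele2}), one recovers precisely $A_1(i,k,n)$. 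The equivalence of the two expressions for $A_1$ given in the statement follows from the substitution $l=t-r$, which converts the constraint $r\in\{\lceil t/2\rceil,\dots, t\wedge p_i\wedge p_k\}$ with $t\leq p_i+p_k-1$ into $l\in\{0,\dots,r\wedge(p_i+p_k-r-1)\}$.

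For the second inequality $A_1\leq A_2$, I would split the sum in $A_1$ according to the parity of $t$. For even $t=2s$, the term with $r=s$ (so $l=t-r=s$) contributes $C(p_i,p_k,2s,s)\|\psi^{(i)}\star_s^s\psi^{(k)}\|_{L^2(\mu^{\otimes p_i+p_k-2s})}$ with exponent $n^{t/2-r}=1$; these diagonal terms are kept as they appear in $A_2$. For $r>s$ in the even case, as well as for every admissible $r\geq s$ in the odd case $t=2s-1$, one has $l=t-r<r$, so Lemma \ref{contrlemma}-(iv) applies and gives the uniform bound $\|\psi^{(i)}\star_r^{t-r}\psi^{(k)}\|_{L^2}\leq\|\psi^{(i)}\|_{L^4(\mu^{\otimes p_i})}\|\psi^{(k)}\|_{L^4(\mu^{\otimes p_k})}$; collecting these contributions with their surviving powers of $n^{s-r}$ or $n^{s-r-1/2}$ and relabeling yields exactly $A_2(i,k,n)$.

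The main obstacle is not analytical but combinatorial: careful bookkeeping of the ranges of $r$ and $t$ (particularly the case distinction between even and odd $t$, and between the diagonal $l=r$ and off-diagonal $l<r$ cases), together with consistently tracking the normalization $\phi^{(j)}=\psi^{(j)}/\sqrt{\binom{n}{p_j}}$ through the binomial factors in \eqref{kerpf}. Once this is organized, every step is a direct invocation of a result already established in Section \ref{intro}, and the proof is essentially a multivariate rewriting of \eqref{1de1}--\eqref{1de3}.
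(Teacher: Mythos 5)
Your proposal is correct and follows the same route as the paper's own proof: apply Proposition~\ref{pform} to $W(i)W(k)$, use orthogonality to reduce to the summands $\Var\bigl(J_{p_i+p_k-t}(\chi_{p_i+p_k-t})\bigr)$, bound each via Lemmas~\ref{tele1} and~\ref{tele2} after rescaling $\phi^{(j)}\to\psi^{(j)}$ to obtain $A_1$, and then pass to $A_2$ by splitting on the parity of $t$ and invoking Lemma~\ref{contrlemma}-(iv) on the off-diagonal ($l<r$) contractions. The only quibble is purely linguistic: the factor $\bigl(\binom{n}{p_i}\binom{n}{p_k}\bigr)^{-1/2}$ coming from the normalisation does not ``cancel'' against $\sqrt{\binom{n}{p_i+p_k-t}}$ — rather, the combined ratio is precisely the quantity controlled by Lemma~\ref{tele2} — but this does not affect the correctness of the argument.
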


\begin{proof}[Proof of Lemma \ref{preplemma}]
By orthogonality and the product formula { stated in} Proposition \ref{pform}, we have 
\begin{align}\label{pl1}
\sum_{\substack{M\subseteq[n]:\\\abs{M}\leq p_i+p_k-1}}\Var\bigl(U_M(i,k)\bigr)
&=\sum_{t=1}^{p_i+p_k-1}\Var\Bigl(\sum_{\substack{M\subseteq[n]:\\\abs{M}= p_i+p_k-t}}U_M(i,k)\Bigr)\notag\\
&=\sum_{t=1}^{p_i+p_k-1}\Var\Bigl(J_{p_i+p_k-t}\bigl(\chi_{p_i+p_k-t}\bigr)\Bigr)\,,
\end{align}
where 
\begin{align*}
 \chi_{p_i+p_k-t}&=\sum_{r=\ceil{\frac{t}{2}}}^{t\wedge p_i\wedge p_k}\binom{n-p_i-p_k+t}{t-r}\binom{p_i+p_k-t}
{p_i-r,p_k-r,2r-t}\bigl(\widetilde{\phi^{(i)}\star_r^{t-r}\phi^{(k)}}\bigr)_{p_i+p_k-t}\\
&= \sum_{r=\ceil{\frac{t}{2}}}^{t\wedge p_i\wedge p_k} \frac{\binom{n-p_i-p_k+t}{t-r} }{\sqrt{\binom{n}{p_i}}\sqrt{\binom{n}{p_k}}}  \binom{p_i+p_k-t}
{p_i-r,p_k-r,2r-t}   \bigl(\widetilde{\psi^{(i)}\star_r^{t-r}\psi^{(k)}}\bigr)_{p_i+p_k-t}\,.
\end{align*}
By Lemmas \ref{tele1} and \ref{tele2}, arguing similarly as in the proof of Theorem \ref{1dimbound}, we obtain for $1\leq t\leq p_i+p_k-1$ that 
\begin{align}\label{pl2}
&\sqrt{\Var\Bigl(J_{p_i+p_k-t}\bigl(\chi_{p_i+p_k-t}\bigr)\Bigr)}=\sqrt{\binom{n}{p_i+p_k-t}}
\|\chi_{p_i+p_k-t}\|_{L^2(\mu^{\otimes p_i+p_k-t})}\notag\\
&\leq \frac{\sqrt{\binom{n}{p_i+p_k-t}}}{\sqrt{\binom{n}{p_i}}\sqrt{\binom{n}{p_k}}}
\sum_{r=\ceil{\frac{t}{2}}}^{t\wedge p_i\wedge p_k}\binom{n-p_i-p_k+t}{t-r}\binom{p_i+p_k-t}
{p_i-r,p_k-r,2r-t}\notag\\
&\hspace{4cm}\cdot\|\psi^{(i)}\star_r^{t-r}\psi^{(k)}\|_{L^2(\mu^{\otimes p_i+p_k-t})}\notag\\
&\leq \sum_{r=\ceil{\frac{t}{2}}}^{t\wedge p_i\wedge p_k} C(p_i,p_k,t,r)
\|\psi^{(i)}\star_r^{t-r}\psi^{(k)}\|_{L^2(\mu^{\otimes p_i+p_k-t})}\;n^{t/2-r}\,.
\end{align}
This proves the first inequality { in \eqref{e:twoest}}. The second { estimate in \eqref{e:twoest}} can be deduced from the first one by again distinguishing the cases 
of even and odd values of $1\leq t\leq p_i+p_k-1$ and by using the statement of Lemma \ref{contrlemma}-(iv) 
in the cases $t/2\not= r$. 
\end{proof}

\begin{proof}[Proof of Theorem \ref{mdimbound}]
The theorem follows immediately from Lemmas \ref{mdimlemma} and \ref{preplemma}.
\end{proof}

\section{Bounds for General Symmetric $U$-statistics}\label{general}
{ As anticipated, we now want to apply the multidimensional results of the previous section in order to deal with the one-dimensional normal approximation of general $U$-statistics; in particular, our main aim is to develop tools for systematically dealing with sequences of $U$-statistics {\it without a dominant Hoeffding component} -- thus falling in principle outside the scope of Section \ref{genU}. As before $X = (X_1,...,X_n)$, $n\geq 1$, indicates a vector of i.i.d. random variables, with values in $(E, \mathcal{E})$, and common distribution $\mu$}. 

\medskip

We let $\psi:E^p\rightarrow\R$ be a symmetric kernel of order $p$ which is neither necessarily degenerate nor has a dominating component. 
From \eqref{HDsym} we know that the random variable 
$F:=J_p(\psi)$ has the Hoeffding decomposition 
\begin{equation*}
 F=\sum_{s=0}^p \binom{n-s}{p-s} J_s(\psi_s)=\E[F]+\sum_{s=1}^p \binom{n-s}{p-s} J_s(\psi_s)\,,
\end{equation*}
where the symmetric and degenerate kernels $\psi_s:E^s\rightarrow\R$ of order $s$ are given by \eqref{defpsis}. We will assume that 
\[0<\sigma^2:=\Var(J_p(\psi))=\sum_{s=1}^p\binom{n-s}{p-s}^2\binom{n}{s}\|\psi_s\|_{L^2(\mu^{\otimes s})}^2<+\infty\]
and write
\begin{equation*}
 W:=\frac{F-\E[F]}{\sqrt{\Var(F)}}
\end{equation*}
for the normalised version of $F$. Our goal is to use the multivariate bounds from Theorem \ref{mdimbound} in order to estimate a suitable distance of $W$ to a standard normal random variable $Z { \sim N(0,1)}$. Note that the Hoeffding decomposition 
of $W$ is given by 
\begin{equation}\label{HDFtilde}
 W=\sum_{s=1}^p J_s\biggl(\frac{\binom{n-s}{p-s}}{\sigma}\,\psi_s\biggr)=\sum_{s=1}^p J_s\bigl(\phi^{(s)}\bigr)\,,
\end{equation}
where, in accordance with the notation from Section \ref{ss:multisetup}, we define
\begin{align*}
 \phi^{(s)}&=\phi^{(n,s)}:=\frac{\binom{n-s}{p-s}}{\sigma}\,\psi_s\quad\text{and}\\
 \psi^{(s)}&={ \psi^{(n,s)}}:=\sqrt{\binom{n}{s}}\phi^{(s)}=\frac{\sqrt{\binom{n}{s}}\binom{n-s}{p-s}}{\sigma}\,\psi_s\,,\quad 1\leq s\leq p\,.
\end{align*}
Note that, by construction, we have 
\begin{align}\label{normalization}
 1&=\Var\bigl(W\bigr)=\sum_{s=1}^p\Var\bigl(J_s(\phi^{(s)})\bigr)=\sum_{s=1}^p\frac{\binom{n}{s}\binom{n-s}{p-s}^2}{\sigma^2}{ \|\psi_s\|_{L^2(\mu^{\otimes s})}^2}=\sum_{s=1}^p \|\psi^{(s)}\|^2_{L^2(\mu^{\otimes s})}\,,
\end{align}
which implies that 
\begin{equation}\label{boundL2}
 0\leq  \|\psi^{(s)}\|_{L^2(\mu^{\otimes s})}\leq 1\,,\quad 1\leq s\leq p\,.
\end{equation}

In order to apply Theorem \ref{mdimbound}, we must estimate the following contraction norms.
\begin{align}\label{genconts}
 \|\psi^{(i)}\star_s^l\psi^{(k)}\|_{L^2(\mu^{\otimes i+k-s-l})}&=\frac{\sqrt{\binom{n}{i}}\binom{n-i}{p-i}\sqrt{\binom{n}{k}}\binom{n-k}{p-k}}{\sigma^2}\|\psi_{i}\star_s^l\psi_{k}\|_{L^2(\mu^{\otimes i+k-s-l})}\,,
\end{align}
where $0\leq i,k\leq p$, $1\leq s\leq i\wedge k$ and $0\leq l\leq (i+k-s-1)\wedge s$. 
Since the kernels $\psi_i$, $1\leq i\leq p$, appearing in \eqref{defpsis} have complicated expressions and are, hence, not straightforward to compute in practice, we provide the following lemma, taken from \cite{DKP}, which bounds these norms in terms of 
norms of contractions of the { (much)} simpler functions $g_k$ given by \eqref{gk}. For the reader's convenience, the proof is included in Section \ref{proofs}.
In order to state it we introduce the following notation. 
For positive integers $1\leq r, i, k \leq p$ and $0\leq l\leq p$ such that $0\leq l\leq r\leq i\wedge k$ let 
$Q(i,k,r,l)$ be the set of quadruples $(j,m,a,b)$ of nonnegative integers such that the following hold:
\begin{enumerate}[(1)]
\item $j\leq i$ and $m\leq k$.
\item $b\leq a\leq r$.
\item $b\leq l$.
\item $a-b\leq r-l$.
\item $j+m-a-b\leq i+k-r-l\leq i+k-1$.
\item $a\leq j\wedge m$.
\item If $j=m=p$, then $b=l$ and $a=r\geq1$.
\end{enumerate}

\begin{lemma}[Lemma 5.7 in \cite{DKP}] \label{genulemma}
With the above notation, for positive integers $1\leq r, i, k \leq p$ and $0\leq l\leq p$ such that $0\leq l\leq r\leq i\wedge k$ there exists a constant $K(i,k,r,l)\in(0,\infty)$ only depending on $i,k,r$ and $l$ such that 
\begin{align*}
 \|\psi_{i}\star_r^l\psi_{k}\|_{L^2(\mu^{\otimes i+k-r-l})}&\leq K(i,k,r,l) \max_{(j,m,a,b)\in Q(i,k,r,l)}\|g_j\star_a^b g_m\|_{L^2(\mu^{\otimes j+m-a-b})}\,.
\end{align*}
\end{lemma}

In order to estimate the quantities $A_2(i,k,n)$ from Theorem \ref{mdimbound}, we still have to bound the $L^4$-norms $\|\psi^{(i)}\|_{L^4(\mu^{\otimes s})}$ for $1\leq i\leq p$.
Since
\begin{align*}
 \|\psi_{i}\|_{L^4(\mu^{\otimes i})}^2&=\|\psi_{i}\star_i^0\psi_{i}\|_{L^2(\mu^{\otimes i})}\,,
\end{align*}
we obtain from Lemma \ref{genulemma} that 
\begin{align}\label{boundl4}
 \|\psi^{(i)}\|_{L^4(\mu^{\otimes i})}^2&=\frac{\binom{n}{i}\binom{n-i}{p-i}^2}{\sigma^2}\|\psi_{i}\|_{L^4(\mu^{\otimes i})}^2\notag\\
 &\leq \frac{\binom{n}{i}\binom{n-i}{p-i}^2}{\sigma^2} K(i,i,i,0)\max_{0\leq j,m,a\leq i}\|g_j\star_a^0 g_m\|_{L^2(\mu^{\otimes j+m-a})}\,.
\end{align}

In order to state our normal approximation result for $W$, let us introduce the following notation. For $1\leq i,k\leq p\leq n$ define  
\begin{align*}
 B_1(i,k,n)&:=\sum_{s=1}^{i\wedge k}\sum_{l=0}^{(i+k-s-1)\wedge s} C(i,k,l+s,s) K(i,k,s,l)\\
 & \hspace{1cm}\cdot n^{\frac{l-s}{2}}\frac{\sqrt{\binom{n}{i}}\binom{n-i}{p-i}\sqrt{\binom{n}{k}}\binom{n-k}{p-k}}{\sigma^2}
\max_{(j,m,a,b)\in Q(i,k,s,l)}\|g_j\star_a^b g_m\|_{L^2(\mu^{\otimes j+m-a-b})}
\end{align*}
as well as 
\begin{align*}
 &B_2(i,k,n):=\sum_{s=1}^{\ceil{\frac{i+k}{2}}-1}\biggl(C(i,k,2s,s) K(i,k,s,s) \frac{\sqrt{\binom{n}{i}}\binom{n-i}{p-i}\sqrt{\binom{n}{k}}\binom{n-k}{p-k}}{\sigma^2}\\
 &\hspace{3cm}\cdot  \max_{(j,m,a,b)\in Q(i,k,s,s)}\|g_j\star_a^b g_m\|_{L^2(\mu^{\otimes j+m-a-b})}\\
&\;+\frac{\sqrt{\binom{n}{i}}\binom{n-i}{p-i}\sqrt{\binom{n}{k}}\binom{n-k}{p-k}}{\sigma^2}\Bigl(K(k,k,k,0)\max_{(j,m,a,0)\in Q(k,k,k,0)}\|g_j\star_a^0 g_m\|_{L^2(\mu^{\otimes j+m-a})}\\
&\hspace{1cm}K(i,i,i,0) \max_{(j,m,a,0)\in Q(i,i,i,0)}\|g_j\star_a^0 g_m\|_{L^2(\mu^{\otimes j+m-a})}\Bigr)^{1/2}\cdot\sum_{r=s+1}^{(2s-1)\wedge i\wedge k}C(i,k,2s,r)\;n^{s-r}\biggr)\\
&\;+\frac{\sqrt{\binom{n}{i}}\binom{n-i}{p-i}\sqrt{\binom{n}{k}}\binom{n-k}{p-k}}{\sigma^2}\Bigl(K(k,k,k,0)\max_{(j,m,a,0)\in Q(k,k,k,0)}\|g_j\star_a^0 g_m\|_{L^2(\mu^{\otimes j+m-a})}\\
&\hspace{1cm}K(i,i,i,0) \max_{(j,m,a,0)\in Q(i,i,i,0)}\|g_j\star_a^0 g_m\|_{L^2(\mu^{\otimes j+m-a})}\Bigr)^{1/2}\\
&\hspace{2cm}\cdot\sum_{s=1}^{\floor{\frac{i+k}{2}}}\sum_{r=s}^{(2s-1)\wedge i\wedge k} C(i,k,2s-1,r)\;n^{s-r-1/2}\,,
\end{align*}
where the constants $C(i,k,t,s)$ and $K(i,k,s,l)$ are those from Lemmas \ref{tele2} and \ref{genulemma}, respectively. { Despite their complicated definition, dealing with bounds involving $B_1$ and $B_2$ is actually rather straightforward, once one observes }that there are finite constants $b_1(i,k)$ and $b_2(i,k)$ such that 
\begin{align*}
 B_1(i,k,n)&\leq b_1(i,k)\max_{\substack{1\leq s\leq i\wedge k,\\0\leq l\leq(i+k-s-1)\wedge s}}
\frac{n^{2p-(i+k+s-l)/2}}{\sigma^2}\\
&\hspace{2cm}\cdot\max_{(j,m,a,b)\in Q(i,k,s,l)}\|g_j\star_a^b g_m\|_{L^2(\mu^{\otimes j+m-a-b})}\\
&=:B_1'(i,k,n)
\end{align*}
and
\begin{align*}
 B_2(i,k,n)&\leq b_2(i,k) \1_{(i+k>2)} \frac{n^{2p-(i+k)/2}}{\sigma^2}\max_{\substack{1\leq s\leq \ceil{\frac{i+k}{2}}-1\\ (j,m,a,b)\in Q(i,k,s,s)}}\|g_j\star_a^b g_m\|_{L^2(\mu^{\otimes j+m-a-b})}  \\
 &\;+b_2(i,k)\frac{n^{2p-(i+k+1)/2}}{\sigma^2}  \Bigl( \max_{ (j,m,a,0)\in Q(i,i,i,0)}\|g_j\star_a^0 g_m\|_{L^2(\mu^{\otimes j+m-a})}\\
 &\hspace{2cm}\cdot\max_{(j,m,a,0)\in Q(k,k,k,0)}\|g_j\star_a^0 g_m\|_{L^2(\mu^{\otimes j+m-a})}\Bigr)^{1/2} \\
&=:B_2'(i,k,n)\,.
\end{align*}

\begin{theorem}[Normal approximation of general symmetric $U$-statistics]\label{genutheo}
 Let $W$ be as above and let { $N$} be a standard normal random variable. Furthermore, let $g\in C^3(\R)$ have three bounded 
derivatives. Then, for $j=1,2$, we have the bound 
\begin{align}
\babs{\E[g(W)]-\E[g(N)]}&\leq \frac{1}{4}\sqrt{p}\fnorm{g''} \sum_{i,k=1}^p (i+k) B_j(i,k,n) \notag \\
&\;+\frac{2 \fnorm{g'''} \sqrt{p}}{9}\sum_{i=1}^p i\|\psi^{(n,i)}\|_{L^2(\mu^{\otimes p})} B_j(i,i,n)\label{e:tower}\\
&\;+\frac{\sqrt{2p}\fnorm{g'''}}{9\sqrt{n}}\sum_{i=1}^p i^{3/2}\|\psi^{(n,i)}\|^3_{L^2(\mu^{\otimes p})}\sqrt{\kappa_{i}}\notag
\end{align}
{ and an analogous inequality holds} with the constants $B_j(i,k,n)$ replaced by the respective $B_j'(i,k,n)$. Here, $\kappa_i$ is a finite constant depending only on $i$.

\end{theorem}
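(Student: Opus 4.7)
The strategy is to lift the one-dimensional normal approximation of $W$ to a multivariate statement via the Hoeffding decomposition \eqref{HDFtilde}, and then invoke Theorem \ref{mdimbound}. Set $\mathbf{W} := (W(1),\dotsc,W(p))^T$ where $W(s):=J_s(\phi^{(s)})$, so that $W = W(1)+\cdots+W(p)$. Because each $\phi^{(s)}$ is degenerate of a distinct order, the components $W(s)$ are pairwise orthogonal and, by \eqref{normalization}, their variances sum to one. Consequently, the Gaussian vector $Z=(Z(1),\dotsc,Z(p))^T\sim N_p(0,\V)$ from Section \ref{ss:multisetup} satisfies $Z(1)+\cdots+Z(p)\stackrel{d}{=} N$, so the target distance equals the error incurred by replacing $\mathbf{W}$ by $Z$ against a single test function of the sum.

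The plan is therefore to apply Lemma \ref{mdimlemma}(i) (equivalently, Theorem \ref{mdimbound}(i)) to $\mathbf{W}$ with $h(x_1,\dotsc,x_p):=g(x_1+\cdots+x_p)$. Since $h=g\circ L$ with $L(x):=\sum_i x_i$, elementary computations identify $D^k h(x)$ with the symmetric $k$-form $g^{(k)}(\sum x_i)\,\mathbf{1}^{\otimes k}$, so that $\tilde{M}_2(h)$ and $M_3(h)$ are bounded by combinatorial multiples of $\fnorm{g''}$ and $\fnorm{g'''}$, respectively; together with $p_1=1$ (the minimal order present in $\mathbf{W}$) and the factor $\sqrt{d}=\sqrt{p}$ coming through \eqref{M2bound}, these yield the prefactors $\tfrac{1}{4}\sqrt{p}\fnorm{g''}$, $\tfrac{2\sqrt{p}}{9}\fnorm{g'''}$ and $\tfrac{\sqrt{2p}}{9\sqrt{n}}\fnorm{g'''}$ appearing in \eqref{e:tower}.

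Next, the quantities $A_j(i,k,n)$ produced by Theorem \ref{mdimbound} must be rewritten in terms of the original kernel $\psi$. Each contraction $\|\psi^{(i)}\star_r^l\psi^{(k)}\|_{L^2}$ is, by \eqref{genconts}, a rescaled contraction of the Hoeffding kernels $\psi_i,\psi_k$; Lemma \ref{genulemma}(i) then dominates $\|\psi_i\star_s^l\psi_k\|_{L^2}$ by a constant $K(i,k,s,l)$ times a maximum of $\|g_i\star_r^t g_k\|_{L^2}$ over $(r,t)\in Q(s,l)$. The $L^4$-norms of $\psi^{(i)}$ needed to control the $A_2$ version are handled in precisely the same way through the identity $\|\psi_i\|_{L^4}^2=\|\psi_i\star_i^0\psi_i\|_{L^2}$, leading to the estimate \eqref{boundl4}. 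Substituting these inequalities into the respective definitions of $A_1$ and $A_2$ produces exactly the quantities $B_1(i,k,n)$ and $B_2(i,k,n)$ introduced before the theorem, which establishes the bound in its $B_j$ form; the variant with $B_j'(i,k,n)$ follows by the elementary comparison stated just above the theorem, obtained by pulling out the extremal contraction norm over the summation indices.

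The main obstacle is Step~3, the tower of translations
\[\psi^{(i)}\;\longrightarrow\;\psi_i\;\longrightarrow\;g_i\]
of contraction norms. This is where Lemma \ref{genulemma} does the heavy lifting, since the explicit expression \eqref{defpsis} for $\psi_i$ as a signed combination of the functions $g_k$ makes the direct analysis of $\|\psi_i\star_s^l\psi_k\|_{L^2}$ combinatorially painful; routing every contraction through the simpler $g_k$ is therefore essential for practical applications. Everything else in the argument--bookkeeping of the constants $C(p,q,t,r)$, $K(i,k,s,l)$ and the normalisation factors in \eqref{genconts}--is mechanical once the multivariate bound of Theorem \ref{mdimbound} has been conjured into action.
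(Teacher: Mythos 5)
Your proposal is correct and follows essentially the same route as the paper: pass to the vector of normalised Hoeffding components $V=(J_1(\phi^{(1)}),\dotsc,J_p(\phi^{(p)}))^T$, set $h=g\circ S$ with $S(x)=\sum_j x_j$, note that $\V$ is diagonal with trace one so $S(Z)\sim N(0,1)$, apply Theorem \ref{mdimbound}(i) (which tolerates a singular $\V$) with $d=p$ and $p_1=1$, and then convert the $A_j$-bounds into the $B_j$-bounds through \eqref{genconts}, Lemma \ref{genulemma} and \eqref{boundl4}, with the passage to $B_j'$ being elementary. The only cosmetic difference is that you describe $\tilde M_2(h),M_3(h)$ as ``combinatorial multiples'' of $\fnorm{g''},\fnorm{g'''}$, whereas the paper computes $M_k(h)=\fnorm{g^{(k)}}$ and $\tilde M_2(h)\le\sqrt p\,\fnorm{g''}$ directly from the fact that every $k$-th partial of $h$ equals $g^{(k)}\circ S$; this is a matter of presentation, not of substance.
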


\begin{remark}\label{remgenu}
 \begin{enumerate}[(a)]
  \item Note that, by using \eqref{boundL2}, the bound in Theorem \ref{genutheo} could further be simplified but we prefered leaving it as it is since there might be cases where it is possible to 
 estimate the quantities $\|\psi^{(n,i)}\|_{L^2(\mu^{\otimes p})}$ more accurately. 
  \item {A drawback of our approach is that} Theorem \ref{genutheo} allows one to only bound expressions involving $C^3$ test functions. {Such a technical limitation is an artifact of our method of proof, involving a} detour through the multivariate normal approximation result stated in Theorem \ref{mdimbound}. {On the other hand, our derivation of \eqref{e:tower} from a multidimensional result immediately implies that, if one can prove that the right-hand side of \eqref{e:tower} converges to zero as $n\to \infty$, then one can immediately deduce the joint convergence of the vector of Hoeffding components of the $U$-statistic $W$ to some multivariate normal distribution. From a qualitative point of view, this seems to be a much stronger statement than that the simple convergence of { $W$}, since the latter might a priori be due to certain cancellation effects.} Observe that, as several Hoeffding components of { $W$} might vanish in the limit (thus generating a singular covariance matrix), in the proof of \eqref{e:tower} we can only invoke part (i) of Theorem \ref{mdimbound} which gives a bound in terms of $C^3$ test functions. {In general, recurring to smoother test functions seems to be inevitable when using Stein's method for} multivariate normal approximation, when one does not deal with { an invertible}
  limiting covariance matrix. { As already discussed}, whenever one Hoeffding component is dominant, then one might use the bound from Theorem \ref{1dimth2} in order to obtain a bound on the Wasserstein distance. 
  \item We stress that our bound is purely analytic and that the functions $g_k$, whose contraction norms must be evaluated, are typically much easier to compute than the individual Hoeffding kernels $\psi_s$ which are alternating 
  sums of the $g_k$ for $0\leq k\leq s$ (see \eqref{defpsis}). Apart from these norms, the only quantity which has to be controlled is the variance $\sigma^2$ of $F$. 
  \item We remark that the maxima appearing in the definition of the quantities $B_j(i,k,n)$ and $B_j'(i,k,n)$ give certain important constraints on the indices $s,l,j,m,a$ and $b$. This is comparable to similar constraints 
  appearing in the bounds provided in \cite{LRP1} and \cite{LRP2}. In particular, when dealing with example cases, it is usually important to take these constraints into account in order to show that the bounds indeed converge to zero. 
  This is for instance the case in the example dealt with in Section \ref{apps}.
	{\item Using a linear projection $\R^{p_1+\ldots+p_d}\rightarrow\R^d$, we could similarly use Theorem \ref{mdimbound} in order to provide a bound on the $d$-dimensional normal approximation of a vector of non-degenerate $U$-statistics of respective orders $p_1,\dotsc,p_d$. This is clear from the proof of Theorem \ref{genutheo}. }
 \end{enumerate}

\end{remark}

\begin{proof}[Proof of Theorem \ref{genutheo}]
 Let $g\in C^3(\R)$ have three bounded derivatives. Define $S:\R^p\rightarrow\R$ by $S(x_1,\dotsc,x_p):=\sum_{j=1}^p x_j$ as well as $h:\R^p\rightarrow\R$ by $h:=g\circ S$. Then, $h\in C^3(\R^p)$,  and one can easily check 
 that 
 \[\frac{\partial h^k}{\partial x_{i_1}\ldots\partial x_{i_k}}= g^{(k)}\circ S\,,\quad 0\leq k\leq3\,.\] 
 In particular, it follows that 
 \[M_k(h)=\fnorm{g^{(k)}}\,, \quad 0\leq k\leq 3\quad\text{as well as  } \tilde{M}_2(h)\leq \sqrt{p}\fnorm{g''}\,,\]
 where the last inequality is by \eqref{M2bound}. Let $\mathbb{V}$ be the covariance matrix of the vector 
 \[V:=\bigl(J_1(\phi^{(1)}),\dotsc,J_p(\phi^{(p)})\bigr)^T\,.\]
 Then, $S(V)=W$, $\mathbb{V}$ is diagonal and by \eqref{normalization} its diagonal entries sum up to $1$. Hence, letting $Z=(Z_1,\dotsc,Z_p)^T$ be a centered $p$-dimensional normal vector with covariance matrix $\mathbb{V}$, it follows that 
 $S(Z)$ has the standard normal distribution of $N$. It is easy to see that plugging in the bounds on the contractions $\|\psi_{i}\star_s^l\psi_{k}\|_{L^2(\mu^{\otimes i+k-s-l})}$ provided by Lemma \ref{genulemma} and \eqref{boundl4} as well as 
 respecting \eqref{genconts} yields the bounds $B_j(i,k,n)$ which are themselves bounded from above by the $B_j'(i,k,n)$. Finally, we notice that 
 \begin{align*}
  \babs{\E[g(W)]-\E[g(N)]}&=\babs{\E[h(V)]-\E[h(Z)]}\,,
 \end{align*}
for which an upper bound is provided in Theorem \ref{mdimbound} (i). 
\end{proof}

\section{An Application to Subgraph Counting}\label{apps}

{\it Geometric random graphs} are graphs whose vertices are random points scattered on some Euclidean domain, and whose edges are determined by some explicit geometric rule; in view of their wide applicability (for instance, to the modelling of telecommunication networks), these objects represent a very popular and important alternative to the combinatorial Erd\"os-R\'enyi random graphs. We refer to the monographs \cite{Penrose} and \cite{PecRei16} for a {detailed} introduction to this topic and its several applications. We will use our Theorem \ref{genutheo} in order to prove the Gaussian fluctuations of subgraph counts in a typical model of this kind. Although the asymptotic (jointly) Gaussian behaviour of these counts is well understood both in the 
binomial and in the Poisson point process situation (at least at the qualitative level, see again \cite{Penrose}), we chose this example in order to demonstrate the power and easy applicability of our bounds. As already discussed, in the case of uniformly distributed points on some Euclidean domain, our results yield a substantial refinement and extension of \cite{BhaGo92, JJ}.  {In the case where the vertices of the random graph are generated by a Poisson measure}, the recent paper \cite{LRP1} provides the univariate CLT with a rate of convergence for the Wasserstein distance. 

\medskip

We fix a dimension $d\geq1$ as well as a bounded and Lebesgue almost everywhere continuous probability density function $f$ on $\R^d$. Let $\mu(dx):=f(x)dx$ be the corresponding probability measure on $(\R^d,\B(\R^d))$ and suppose that 
$X_1,X_2,\dotsc$ are i.i.d. with distribution $\mu$. Let $X:=(X_j)_{j\in\N}$. We denote by $(t_n)_{n\in\N}$ a sequence of radii in $(0,\infty)$ such that $\lim_{n\to\infty}t_n=0$. For each $n\in\N$, we denote by $G(X;t_n)$ the 
\textbf{random geometric graph} obtained as follows. The vertices {of $G(X;t_n)$} are given by the set $V_n:=\{X_1,\dotsc,X_n\}$, which $\Prob$-a.s. has cardinality $n$, 
and two vertices $X_i,X_j$ are connected if and only if $0<\Enorm{X_i-X_j}<t_n$. Furthermore, let $p\geq2$ be a fixed integer and suppose that 
$\Gamma$ is a fixed connected graph on $p$ vertices. For each $n$ we denote by $G_n(\Gamma)$ the number of induced subgraphs of $G(X;t_n)$ which are isomorphic to $\Gamma$. 
Recall that an induced subgraph of $G(X;t_n)$ consists of a non-empty subset $V_n'\subseteq V_n$ and its edge set is precisely the set of edges of $G(X;t_n)$ whose endpoints are {both} in $V_n'$. 
We will also have to assume that $\Gamma$ is \textit{feasible} for every 
$n\geq p$. This means that the probability that the restriction of $G(X;t_n)$ to $X_1,\dotsc,X_p$ is isomorphic to $\Gamma$ is strictly positive for $n\geq p$. Note that feasibility depends on the common distribution $\mu$ of the points.
The quantity $G_n(\Gamma)$ is a symmetric $U$-statistic of $X_1,\dotsc,X_n$ since 
\begin{equation*}
 G_n(\Gamma)=\sum_{1\leq i_1<\ldots<i_p\leq n} \psi_{\Gamma,t_n}(X_{i_1},\dotsc,X_{i_p})\,,
\end{equation*}
where $\psi_{\Gamma,t_n}(x_1,\dotsc,x_p)$ equals $1$ if the graph with vertices $x_1,\dotsc,x_p$ and edge set $\{\{x_i,x_j\}\,:\, 0<\Enorm{x_i-x_j}< t_n\}$ is isomorphic to $\Gamma$ and $0$, otherwise. For obtaining asymptotic normality one typically distinguishes between three different asymptotic regimes (see Remark \ref{escusationonpetita} (b) below):
\begin{enumerate}
 \item[\textbf{(R1)}] $nt_n^d\to0$ and $n^p t_n^{d(p-1)}\to\infty$ as $n\to\infty$ (\textit{sparse regime})
 \item[\textbf{(R2)}] $n t_n^d\to\infty$ as $n\to\infty$ (\textit{dense regime})
 \item[\textbf{(R3)}] $nt_n^d\to\rho\in(0,\infty)$ as $n\to\infty$ (\textit{thermodynamic regime})
\end{enumerate}
It turns out that, under regime \textbf{(R2)} one also has to take into account whether the common distribution $\mu$ of the $X_j$ is the uniform distribution $\mathcal{U}(M)$ on some Borel subset $M\subseteq\R^d$, $0<\lambda^d(M) <\infty$ with 
density $f(x)=\lambda^d(M)^{-1}\,\1_M(x)$, or not. 
To take into account this specific situation, we will therefore distinguish between the following four cases:
\begin{enumerate}
 \item[\textbf{(C1)}] $nt_n^d\to0$ and $n^p t_n^{d(p-1)}\to\infty$ as $n\to\infty$. 
 \item[\textbf{(C2)}] $n t_n^d\to\infty$ as $n\to\infty$ and $\mu= \mathcal{U}(M)$ for some Borel subset $M\subseteq\R^d$ s.t. $0<\lambda^d(M) <\infty$.
 \item[\textbf{(C3)}] $n t_n^d\to\infty$ as $n\to\infty$, and $\mu$ is not a uniform distribution.
 \item[\textbf{(C4)}] $nt_n^d\to\rho\in(0,\infty)$ as $n\to\infty$.
 \end{enumerate}

The following important variance estimates will be needed (in what follows, for $a_n,b_n>0$, $n\in\N$, we write $a_n\sim b_n$ if $\lim_{n\to\infty} a_n/b_n=1$).
\begin{proposition}\label{regprop}
 Under all regimes {\normalfont \textbf{(R1)}}, {\normalfont\textbf{(R2)}} and {\normalfont\textbf{(R3)}} it holds that\\
 $\E[G_n(\Gamma)]\sim c n^{p}t_n^{d(p-1)}$ for a constant $c\in(0,\infty)$. Moreover, there exist constants $c_1,c_2,c_3,c_4\in(0,\infty)$ such that, as $n\to\infty$,  
 \begin{enumerate}
  \item[{\normalfont \textbf{(C1)}}] $\Var(G_n(\Gamma))\sim c_1\cdot n^p t_n^{d(p-1)}$,
  \item[{\normalfont \textbf{(C2)}}] $\Var(G_n(\Gamma))\geq c_2\cdot n^{p} t_n^{d(p-1)}$ for all $n\in\N$,
  \item[{\normalfont \textbf{(C3)}}] $\Var(G_n(\Gamma))\sim c_3\cdot n^{2p-1} t_n^{d(2p-2)}$,
  \item[{\normalfont \textbf{(C4)}}] $\Var(G_n(\Gamma))\sim c_4\cdot n$.
 \end{enumerate}
\end{proposition}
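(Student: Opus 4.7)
The plan is to combine the ANOVA-type variance decomposition \eqref{varJp2} with a direct scaling analysis of the conditional expectations $g_k$ introduced in \eqref{gk}. First I would compute $\E[\psi_{\Gamma,t_n}(X_1,\dotsc,X_p)]$ via the change of variables $x_j = x_1 + t_n z_j$, $j = 2,\dotsc,p$: since $\Gamma$ is connected, the indicator collapses to a compactly supported function of the $z_j$'s that does \emph{not} depend on $t_n$, producing the Jacobian factor $t_n^{d(p-1)}$. Boundedness and almost-everywhere continuity of $f$ together with dominated convergence then yield $\E[\psi_{\Gamma,t_n}] \sim I_\Gamma\, t_n^{d(p-1)} \int_{\R^d} f(x)^p\, dx$ for an explicit positive constant $I_\Gamma$ depending only on $\Gamma$, and multiplying by $\binom{n}{p} \sim n^p/p!$ yields the stated asymptotic for $\E[G_n(\Gamma)]$.

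The same scaling applied to $g_k(y_1,\dotsc,y_k) = \E[\psi_{\Gamma,t_n}(y_1,\dotsc,y_k,X_1,\dotsc,X_{p-k})]$ shows that, modulo lower-order boundary terms, $g_k$ is supported on configurations where the $y_j$'s cluster within $O(t_n)$, and that
\[g_k(y_1,\dotsc,y_k) \approx t_n^{d(p-k)}\, f(y_1)^{p-k}\, h_k\Bigl(\frac{y_2-y_1}{t_n},\dotsc,\frac{y_k-y_1}{t_n}\Bigr)\]
for a bounded, compactly supported kernel $h_k$. A further rescaling inside $\E[g_k(X_1,\dotsc,X_k)^2]$ delivers $\E[g_k^2] \sim C_k\, t_n^{d(2p-k-1)}$ while $\E[g_k] \sim c_k\, t_n^{d(p-1)}$. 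For $k \geq 2$ one has $d(2p-k-1) < 2d(p-1)$, so $\Var(g_k(X_1,\dotsc,X_k)) \sim C_k\, t_n^{d(2p-k-1)}$. For $k=1$, both $\E[g_1^2]$ and $(\E[g_1])^2$ are of order $t_n^{2d(p-1)}$, and the leading variance constant is proportional to $\int_{\R^d} f^{2p-1} - \bigl(\int_{\R^d} f^p\bigr)^2$, which is strictly positive in the non-uniform case (C3) by Cauchy--Schwarz, but vanishes precisely when $f^{p-1}$ is a.e.\ constant on $\supp f$, i.e.\ the uniform case (C2); in that case only boundary effects survive and one obtains merely $\Var(g_1(X_1)) = O(t_n^{2d(p-1)+1})$.

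Plugging these estimates into \eqref{varJp2} together with $\binom{n}{p}\binom{p}{k}\binom{n-p}{p-k} \sim a_{p,k}\, n^{2p-k}$, the $k$-th Hoeffding summand contributes
\[a_{p,k}'\, n^{2p-k}\, t_n^{d(2p-k-1)} \;=\; a_{p,k}'\, n\,(n t_n^d)^{2p-k-1}.\]
Under (R1) one has $nt_n^d \to 0$, so the sum is dominated by $k=p$, giving (C1); under (R3) one has $nt_n^d \to \rho$, so each term is of order $n$ and they sum to $c_4\, n$, giving (C4); under (R2) with non-uniform $\mu$, $k=1$ dominates and gives (C3). For (C2), the leading $k=1$ coefficient vanishes, but non-negativity of every Hoeffding variance together with the bound $\binom{n}{p}\Var(\psi_{\Gamma,t_n}) = \binom{n}{p}\bigl(\E[\psi_{\Gamma,t_n}] - \E[\psi_{\Gamma,t_n}]^2\bigr) \sim c''\, n^p t_n^{d(p-1)}$ (coming from the $k=p$ term, since $\psi_{\Gamma,t_n}$ is $0/1$-valued and $\E[\psi_{\Gamma,t_n}] \to 0$) immediately yields the claimed lower bound.

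The main technical subtlety is the uniform regime (C2): the cancellation of the $k=1$ leading constant prevents the simple scaling heuristic from producing a clean asymptotic equivalent of the form $n^p t_n^{d(p-1)}$, because the true leading order there depends on delicate boundary contributions from $\partial M$ and on the interplay of intermediate Hoeffding components. Proposition \ref{regprop} settles for a lower bound in (C2), which is all that is needed for the normal approximation applications; a matching upper bound would require additional regularity on $\partial M$ and uniform control of $g_k$ near the boundary for every $2 \leq k \leq p$.
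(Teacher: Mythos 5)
Your proposal is correct and reaches the same conclusions, but it goes a genuinely more self-contained route than the paper. The paper dispatches cases \textbf{(C1)}, \textbf{(C3)} and \textbf{(C4)}, as well as the asymptotic of $\E[G_n(\Gamma)]$, by citing Proposition 3.1 and Theorems 3.12--3.13 from Penrose's book, and only for \textbf{(C2)} (where the limiting covariance in Penrose's Theorem 3.12 degenerates to zero) does it produce an original lower bound, namely via \eqref{varlb2} and the observation that $\psi_{\Gamma,t_n}^2=\psi_{\Gamma,t_n}$, whence $\binom{n}{p}\Var(\psi_{\Gamma,t_n})=\E[G_n(\Gamma)]-\binom{n}{p}^{-1}\E[G_n(\Gamma)]^2\sim\E[G_n(\Gamma)]$. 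You instead rederive Penrose's asymptotics from scratch by rescaling each $g_k$ and plugging the resulting orders $\Var(g_k)\asymp t_n^{d(2p-k-1)}$ into \eqref{varJp2}; this is essentially a reproduction of the computations underlying Penrose's Chapter 3, including the Cauchy--Schwarz observation that the leading $k=1$ constant $\int f^{2p-1}-\bigl(\int f^p\bigr)^2$ vanishes precisely when $f$ is uniform on its support --- a point the paper leaves implicit. Your \textbf{(C2)} argument is mathematically identical to the paper's (non-negativity of Hoeffding contributions, the $k=p$ term, and the $0/1$-valuedness of the kernel), just phrased via \eqref{varJp2} rather than the pre-packaged bound \eqref{varlb2}.

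Two minor caveats if you wanted to make your scaling derivation fully rigorous: (a) you need to justify that the substitution $x_j=x_1+t_n z_j$ together with boundedness and a.e.\ continuity of $f$ and dominated convergence indeed yields the stated asymptotic equivalences (not just upper bounds) uniformly in the relevant variables --- the connectivity of $\Gamma$ guarantees compact support of the rescaled indicator, which is the key point and which you do invoke; and (b) the dismissal of ``lower-order boundary terms'' in the formula for $g_k$ is unproblematic for the dominant contributions in \textbf{(C1)}, \textbf{(C3)}, \textbf{(C4)}, but is exactly the place where the rigorous treatment becomes delicate in \textbf{(C2)}, as you yourself note. Since the proposition only requires a lower bound in case \textbf{(C2)}, and that lower bound follows from the crude $k=p$ term, both caveats are harmless for the result as stated.
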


\begin{proof}
 The formulas on the asymptotic variances given in Theorems 3.12 and 3.13 in the book \cite{Penrose} yield the claims in the cases {\normalfont\textbf{(C1)}} and {\normalfont\textbf{(C3)}} and {\normalfont\textbf{(C4)}}. 
 However, in the case {\normalfont\textbf{(C2)}}, the limiting covariance appearing in \cite[Theorem 3.12]{Penrose} is actually equal to zero, from which one can only infer that the actual order of the variance of $G_n(\Gamma)$ is of a smaller order than $n^{2p-1} t_n^{d(2p-2)}$. In order to compute an effective lower bound for such a variance, we will apply formula \eqref{varlb2}. 
 Indeed, by \eqref{varlb2} we have 
 \begin{align}\label{vb1}
  \Var\bigl(G_n(\Gamma)\bigr)&\geq \binom{n}{p}\Var\bigl(\psi_{\Gamma, t_n}(X_1,\dotsc,X_p)\bigr)\notag\\
  &=\binom{n}{p}\E\bigl[\psi_{\Gamma, t_n}(X_1,\dotsc,X_p)\bigr]-\binom{n}{p}\Bigl(\E\bigl[\psi_{\Gamma, t_n}(X_1,\dotsc,X_p)\bigr]\Bigr)^2\notag\\
  &= \E\bigl[G_n(\Gamma)\bigr]- \binom{n}{p}^{-1}\Bigl(\E\bigl[G_n(\Gamma)\bigr] \Bigr)^2\,,
 \end{align}
where we have used the fact that $\psi_{\Gamma,t_n}^2=\psi_{\Gamma,t_n}$ for the second identity. Now, from \cite[Proposition 3.1]{Penrose} we know that 
\begin{align*}
 \E\bigl[G_n(\Gamma)\bigr]&\sim n^{p}t_n^{d(p-1)}\mu_\Gamma\,,
\end{align*}
where 
\begin{align*}
 \mu_\Gamma= (p!)^{-1}\int_{\R^d} f(x)^pdx\int_{(\R^d)^{p-1}}\psi_{\Gamma,1}(0,y_2,\dotsc,y_p)dy_2\ldots dy_p>0\,.
\end{align*}
Hence, 
\begin{align*}
 \binom{n}{p}^{-1}\Bigl(\E\bigl[G_n(\Gamma)\bigr] \Bigr)^2&\sim p!\mu_\Gamma^2 n^p t_n^{2d(p-1)}= o\Bigl(\E\bigl[G_n(\Gamma)\bigr]\Bigr)
\end{align*}
and we obtain from \eqref{vb1} that indeed 
\begin{equation*}
 \Var\bigl(G_n(\Gamma)\bigr)\geq c_2 n^{p}t_n^{d(p-1)}\,,\quad n\in\N\,,
\end{equation*}
for a positive constant $c_2$.
\end{proof}

We denote by 
\begin{equation*}
 W:=W_n:=\frac{G_n(\Gamma)-\E[G_n(\Gamma)]}{\sqrt{\Var(G_n(\Gamma))}}
\end{equation*}
the normalized version of $G_n(\Gamma)$. {The following statement is a direct application of the main results of this paper.}

\begin{theorem}\label{subcounts}
 Let $N$ be a standard normal random variable. Then, with the above definitions and notation, for every function $g\in C^3(\R)$ with three bounded derivatives, there exists a finite constant $C>0$ which is 
 independent of $n$ such that for all $n\geq p$,
 \begin{align*}
  \babs{\E[g(W)]-\E[g(N)]}&\leq C\cdot\bigl(n^p t_n^{d(p-1)}\bigr)^{-1/2}\quad\text{in case {\normalfont\textbf{(C1)}}}\,,\\
  \babs{\E[g(W)]-\E[g(N)]}&\leq C\cdot n^{-1/2} \quad\text{in cases {\normalfont\textbf{(C3)}} and {\normalfont\textbf{(C4)} and}}\\
  \babs{\E[g(W)]-\E[g(N)]}&\leq C\cdot \bigl(n^{2p-3}t_n^{d(2p-2)}\bigr)^{1/2} \quad\text{in case {\normalfont\textbf{(C2)}}}\,.
 \end{align*}
In particular, we have that $W_n$ always converges in distribution to $N$ as $n\to\infty$ in the cases {\normalfont \textbf{(C1)}}, {\normalfont \textbf{(C3)}} and {\normalfont\textbf{(C4)}}. 
In case {\normalfont\textbf{(C2)}}, we have that $W_n$ converges in distribution to $N$ under the additional assumption that $\lim_{n\to\infty} n^{2p-3}t_n^{d(2p-2)}=0$.
 \end{theorem}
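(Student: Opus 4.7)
The plan is to apply Theorem \ref{genutheo} to the $U$-statistic $F = G_n(\Gamma) = J_p(\psi_{\Gamma,t_n})$, using the variant of the bound involving the coefficients $B_2'(i,k,n)$. This reduces the task to three subproblems: (i) controlling the variance $\sigma^2 = \Var(F)$; (ii) estimating the $L^2$-norms of the contractions of the auxiliary kernels $g_k(y_1,\ldots,y_k) = \E[\psi_{\Gamma,t_n}(y_1,\ldots,y_k, X_1,\ldots,X_{p-k})]$ that appear inside the definitions of $B_2'(i,k,n)$; and (iii) combining these estimates and identifying the dominant contribution in each regime (C1)--(C4).

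Subproblem (i) is covered directly by Proposition \ref{regprop}, which yields matching asymptotics for $\sigma^2$ in cases (C1), (C3), (C4), and the lower bound $\sigma^2 \gtrsim n^p t_n^{d(p-1)}$ in case (C2). For subproblem (ii), the central geometric observation is that, since $\Gamma$ is connected on $p$ vertices, the indicator $\psi_{\Gamma,t_n}(x_1,\ldots,x_p)$ vanishes unless the configuration $(x_1,\ldots,x_p)$ has diameter at most $(p-1)t_n$. Integrating out $p-k$ of its arguments against the bounded density $f$ then yields the pointwise estimate
\begin{equation*}
|g_k(y_1,\ldots,y_k)| \leq C\, t_n^{d(p-k)}\, \1_{\{\mathrm{diam}(y_1,\ldots,y_k) \leq (p-1)t_n\}},
\end{equation*}
the factor $t_n^{d(p-k)}$ being the probability, under $f$, that $p-k$ i.i.d. points complete the pattern. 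Applying the same logic to the four-fold integral representation
\begin{equation*}
\|g_i \star_r^t g_k\|_{L^2}^2 = \int g_i(x,y,u)\,g_k(x,y,v)\,g_i(x',y,u)\,g_k(x',y,v)\, d\mu^{\otimes(i+k+t-r)},
\end{equation*}
together with a case analysis on the connectivity of the ``sharing graph'' formed by the repeated variables $x,x',y,u,v$, leads to explicit bounds $\|g_i \star_r^t g_k\|_{L^2}^2 \leq C\, t_n^{d\,\alpha(i,k,r,t)}$ for computable exponents $\alpha$. The same scheme provides the $L^4$-norm estimates $\|g_i\star_r^0 g_i\|_{L^2}$ needed for the sub-dominant terms of $B_2'(i,k,n)$.

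For subproblem (iii), substituting the bounds of (i) and (ii) into the inequality of Theorem \ref{genutheo} produces an explicit upper bound on $|\E[g(W)]-\E[g(N)]|$ as a sum of monomials in $n$ and $t_n^d$. Depending on the regime $nt_n^d\to 0$, $\to\infty$, or $\to \rho$, a single leading term is identified: in cases (C1), (C3), (C4) it produces the rates $(n^p t_n^{d(p-1)})^{-1/2}$, $n^{-1/2}$, and $n^{-1/2}$ respectively; in case (C2) the weaker variance lower bound forces the slower rate $(n^{2p-3}t_n^{d(2p-2)})^{1/2}$. The convergence in distribution $W_n\Rightarrow N$ then follows since convergence of $\E[g(W_n)]\to \E[g(N)]$ for every $C^3$ test function $g$ with bounded derivatives implies weak convergence by a standard density argument.

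The main obstacle is the combinatorial bookkeeping required to extract the leading order term from $B_2'(i,k,n)$, whose definition involves many summands indexed by multi-indices $(i,k,s,l,r,t)$. Case (C2) is the most delicate, since only a lower bound on the variance is available; the condition $n^{2p-3}t_n^{d(2p-2)}\to 0$ in the CLT statement emerges precisely as the threshold at which the leading term of $B_2'/\sigma$ vanishes.
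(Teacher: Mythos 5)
Your plan matches the paper's proof of Theorem~\ref{subcounts} step for step: apply Theorem~\ref{genutheo}, control the variance via Proposition~\ref{regprop}, estimate the contraction norms $\|g_i\star_r^t g_k\|_{L^2}$ using the geometric fact that $\psi_{\Gamma,t_n}$ is supported on configurations of diameter $O(t_n)$, and then identify the dominant monomial in each regime. The paper invokes $B_1'(i,k,n)$ rather than $B_2'(i,k,n)$ (both are valid specializations of Theorem~\ref{genutheo} and yield the same rates here), and it imports the key estimate $\|g_{\Gamma,t_n}^{(i)}\star_r^t g_{\Gamma,t_n}^{(k)}\|_{L^2}=O\bigl(t_n^{d(4p-i-k-r+t-1)}\bigr)$ directly from the Poisson-space computations in \cite{LRP2} (with the $n$-prefactor stripped out, since those authors use the control measure $n\mu$). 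You instead propose to re-derive this estimate from the pointwise bound $|g_k|\leq Ct_n^{d(p-k)}\1\{\mathrm{diam}\leq(p-1)t_n\}$ via the four-fold integral and a connectivity analysis of the sharing graph; this is a valid and more self-contained route to the same quantity, and the conclusion from there is identical.

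One point deserves to be spelled out, since your sketch (like the paper's) treats it as plain bookkeeping but it is actually a genuine subtlety. The exponent $4p-i-k-r+t-1$ assumes the sharing graph of the four factors is \emph{connected}. When $r=t=0$ (the tensor-product contraction, which lies in $Q(s,l)$ for every $(s,l)$), the graph disconnects, the integration gains one fewer power of $t_n^d$, and the resulting contribution of $\|g_p\star_0^0 g_p\|_{L^2}=\|\psi_{\Gamma,t_n}\|_{L^2}^2\sim t_n^{d(p-1)}$ to $B_j'(p,p,n)$ is $O(1)$ rather than $o(1)$ in regime \textbf{(C1)} if one bounds as written. To recover the stated rate one must track that the $(r,t)=(0,0)$ terms actually produced by expanding $\psi_i\star_s^l\psi_k$ (see the proof of Lemma~\ref{genulemma}) always involve lower-order $g_{j'}\star_0^0 g_{m'}$ with $j'\leq i-l$, $m'\leq k-l$, and these carry an extra factor $t_n^{2dl}$ over the crude $\|g_i\|_{L^2}\|g_k\|_{L^2}$ bound; equivalently, one can bypass $B_j'$ and estimate the Hoeffding-kernel contractions $\|\psi^{(i)}\star_s^l\psi^{(k)}\|_{L^2}$ in $A_1$ directly. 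Without this refinement the argument as you have written it, and indeed the paper's own two-line reference to \cite{LRP2}, leaves an $O(1)$ gap at $i=k=p$, $s=l$.
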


\begin{remark}\label{escusationonpetita} 
\begin{enumerate}[(a)]
 \item The proof of Theorem \ref{subcounts} provided below is remarkably short -- in particular, because we are able to directly exploit several technical computations taken from \cite{LRP2}. The fact that a CLT for $U$-statistics based on i.i.d. samples can now be directly proved by slightly adapting the computations for the Poisson setting is a demonstration of the power of Theorem \ref{genutheo} above, allowing one to replace estimates involving the kernels of Hoeffding decompositions with considerably simpler expressions. As a side remark, we observe that comparable bounds could in principle be obtained by combining \cite{LRP2} with a de-Poissonization technique analogous to \cite{DynMan83}; this would however change the rates of convergence, as well as force us to deal with some complicated conditional variance estimates and provide less complete information about the fluctuations of Hoeffding projections -- see also Remark \ref{remgenu}-(b).
 \item Note that in all the three regimes \textbf{(R1)}, \textbf{(R2)} and \textbf{(R3)} considered in Theorem \ref{subcounts}, one has that $\lim_{n\to\infty} n^pt_n^{d(p-1)}=+\infty$. Indeed, it is shown in \cite[Section 3.2]{Penrose} that 
 $W_n$ converges weakly to a Poisson distribution if $\lim_{n\to\infty} n^pt_n^{d(p-1)}=\alpha\in (0,\infty)$ and to $0$ if $\lim_{n\to\infty} n^pt_n^{d(p-1)}=0$, respectively. Hence, $\lim_{n\to\infty} n^pt_n^{d(p-1)}=+\infty$ is a necessary 
 condition for the asymptotic normality of $G_n(\Gamma)$.
 \item We remark that the distinction between uniform distributions and non-uniform distribution is not necessary for the analogous problem on Poisson space considered in \cite{Penrose,LRP2}. The reason is that, in this situation, the formulae for the respective limiting variances are slightly different, see \cite[Section 3.2]{Penrose}. The phenomenon that, in the case of a uniform distribution on a set $M$, the asymptotic order of the variance is different in the dense regime \textbf{(R2)} has already been observed in \cite[Section 4]{JJ} and in \cite[Theorem 2.1,Theorem 3.1]{BhaGo92}. It is remarked on page 1357 of \cite{JJ}, in the special case $p=2$ of edge counting, that the asymptotic order of $\Var(G_n(\Gamma))$ in fact depends on the boundary structure of the set $M$. Moreover, for smooth enough boundaries, it is claimed there that $ \Var(G_n(\Gamma))\sim c n^2 t_n^d$ for some constant $c\in(0,\infty)$, whenever $t_n=o(n^{-1/(d+1)})$. Hence, there are cases where our lower bound for $\Var(G_n(\Gamma))$ in case \textbf{(C2)} given in Proposition \ref{regprop} is sharp.  
 \item Interestingly, in the case of a uniform $\mu$, the condition {\bf($\mathbf{D_3'}$)}, which is assumed in Theorem 3.1 of \cite{BhaGo92} to guarantee asymptotic normality for the number of $p$ clusters {(that is, subgraphs of $p$ vertices that are isomorphic to the complete graph)}, is exactly the same as our additional condition that
 $$\lim_{n\to\infty} n^{2p-3}t_n^{d(2p-2)}=0.$$ {We notice that \cite[Theorem 3.1]{BhaGo92} exclusively deals with the counting of $p$-clusters, whereas our findings allow one to deduce normal fluctuations for general connected graphs of $p$ vertices.}
 \item As discussed above (see the proof of Proposition \ref{regprop}), in the situation of case \textbf{(C2)}, even the qualitative CLT for $\G_n(\Gamma)$ given in Theorem \ref{subcounts} seems to be new (for instance, in this case 
 the scaling used in Theorem 3.12 of \cite{Penrose} leads to a degenerate limit). We mention that, in the very special case of edge counting ($p=2$)  considered in \cite[Section 4]{JJ}, the authors prove that asymptotic normality holds even without the additional assumption that $\lim_{n\to\infty} n t_n^{2d}=0$.
\end{enumerate}
\end{remark}

 \begin{remark}\label{sgrem}
  We further mention that, in the cases {\normalfont \textbf{(C1)}}, {\normalfont \textbf{(C3)}} and {\normalfont\textbf{(C4)}}, we obtain the same rate of convergence as the one obtained in \cite{LRP2} in the Poisson situation for the Wasserstein distance. Moreover, if $(t_n)_{n\in\N}$ is bounded away from zero, then the CLT 
  holds true due to Hoeffding's classical CLT via the projection method \cite{Hoeffding}. In this case, a combination of Theorems \ref{1dimbound} and \ref{1dimth2} yields a bound of order $n^{-1/2}$ on the Wasserstein distance. 
 \end{remark}

\begin{proof}[Proof of Theorem \ref{subcounts}]
  Denote by $\sigma_n^2:=\Var(G_n(\Gamma))$ the variance of $G_n(\Gamma)$ and let $g_{\Gamma,t_n}^{(k)}$ be the functions defined in \eqref{gk}, corresponding to the kernel $\psi_{\Gamma,t_n}$. Moreover, fix integers $1\leq k\leq i\leq p$ and $l,r$ such that 
 $1\leq r\leq k$ and $0\leq l\leq r\wedge (i+k-r-1)$. Since $t_n\to0$ as $n\to\infty$, we can assume that $0<t_n<1$ for each $n\geq p$.
The computations on pages 4196-4197 of \cite{LRP2} show that for all 
\begin{align*}
 (j,m,a,b)\in P&:=\Bigl(\{(j,m,a,b):1\leq b\leq a\leq j\leq m\text{ and } b<m\}\\
&\hspace{2cm}\cup \{(j,m,a,b): j=m=a\text{ and } b=0\}\Bigr) \cap Q(i,k,r,l)
\end{align*}
we have that 
  \begin{align}\label{sg1}
	\|g_{\Gamma,t_n}^{(j)}\star_a^b g_{\Gamma,t_n}^{(m)}\|_{L^2(\mu^{\otimes j+m-a-b})}^2  &=O\bigl(t_n^{d(4p-(j+m+a-b)-1)}\bigr)\notag\\
	&=O\bigl(t_n^{d(4p-(i+k+r-l)-1)}\bigr)\,,
  \end{align}
	where the second relation follows from $0<t_n<1$ and the inequality $j+m+a-b\leq i+k+r-l$.
 (We observe that the authors of \cite{LRP2} actually deal with the rescaled measure $n\cdot\mu$, which is why they obtain an additional power of $n$ as a prefactor). We show first that 
the estimates in \eqref{sg1} continue to hold for quadruples $(j,m,a,b)\in Q(i,k,r,l)\setminus P$. 
To this end, we first remark that we have the asymptotic relations
\begin{align}
 \norm{g_{\Gamma,t_n}^{(m)}}_{L^2(\mu^{\otimes m})}^2&\lesssim (t_n^d)^{2p-m-1}\,,\quad 1\leq m\leq p\text{ and}\label{asrelrg1}\\
 \mu^p(\psi_{\Gamma,t_n}):=\int_{(\R^d)^p} \psi_{\Gamma,t_n} d\mu^p&\lesssim (t_n^d)^{p-1}\,.\label{asrelrg2}
\end{align}
Relation \eqref{asrelrg1} follows from the computation 
\begin{align*}
 &\norm{g_{\Gamma,t_n}^{(m)}}_{L^2(\mu^{\otimes m})}^2=\int_{(\R^d)^m} g_{\Gamma,t_n}^{(m)}(x_1,\dotsc,x_m)^2\prod_{j=1}^m f(x_j)dx_j\\
 &=\int_{(\R^d)^m}\prod_{j=1}^m f(x_j)dx_j\int_{(\R^d)^{2p-2m}} \psi_{\Gamma,t_n}(x_1,\dotsc,x_p)\\
 &\hspace{3cm}\psi_{\Gamma,t_n}(x_1,\dotsc,x_m,z_{m+1},\dotsc,z_p) \prod_{l=m+1}^p f(x_l)f(z_l)dx_ldz_l\\
 &=\int_{(\R^d)^m}\prod_{j=1}^m f(x_j)dx_j\int_{(\R^d)^{2p-2m}} \psi_{\Gamma,1}\bigl(0,t_n^{-1}(x_1-x_2) \dotsc,t_n^{-1}(x_1-x_p)\bigr)\\
 &\hspace{2cm}\psi_{\Gamma,1}\bigl(0,t_n^{-1}(x_1-x_2),\dotsc,t_n^{-1}(x_1-x_m),t_n^{-1}(x_1-z_{m+1}),\dotsc,t_n^{-1}(x_1-z_{p})\bigr)\\
&\hspace{4cm} \prod_{l=m+1}^p f(x_l)f(z_l)dx_ldz_l\\
&=(t_n^d)^{2p-m-1}\int_{\R^d} f(x_1)dx_1\int_{(\R^d)^{m-1}}\prod_{j=2}^mf(x_1+t_ny_j)dy_j\\
&\hspace{2cm}\cdot\int_{\R^{2p-2m}}\prod_{l=m+1}^pf(x_1+t_nu_l)f(x_1+t_nv_l)du_ldv_l\\
&\hspace{3cm}\psi_{\Gamma,1}(0,y_2,\dotsc,y_m,u_{m+1},\dotsc,u_p)\psi_{\Gamma,1}(0,y_2,\dotsc,y_m,v_{m+1},\dotsc,v_p)\\
&\sim (t_n^d)^{2p-m-1}\int_{\R^d} f(x_1)^{2p-m}dx_1\int_{(\R^d)^{m-1}}\prod_{j=2}^mdy_j\int_{\R^{2p-2m}}\prod_{l=m+1}^pdu_ldv_l\\
&\hspace{3cm}\psi_{\Gamma,1}(0,y_2,\dotsc,y_m,u_{m+1},\dotsc,u_p)\psi_{\Gamma,1}(0,y_2,\dotsc,y_m,v_{m+1},\dotsc,v_p)\\
&\lesssim (t_n^d)^{2p-m-1}\,,
\end{align*}
where we have made use of the translation invariance and scaling property of the kernel $\psi_{\Gamma,t_n}$ as well as of the a.e.-continuity of $f$. The derivation of \eqref{asrelrg2} is similar but easier and is for this reason omitted. First, if $a=b=0$ and $j,m\geq1$, then we have 
\begin{align*}
\norm{g_{\Gamma,t_n}^{(j)}\star_0^0 g_{\Gamma,t_n}^{(m)}}_{L^2(\mu^{\otimes j+m})}^2&=\norm{g_{\Gamma,t_n}^{(j)}}_{L^2(\mu^{\otimes j})}^2\cdot\norm{g_{\Gamma,t_n}^{(m)}}_{L^2(\mu^{\otimes m})}^2\\
&\lesssim (t_n^d)^{2p-j-1}(t_n^d)^{2p-m-1}=(t_n^d)^{4p-(j+m)-2}\,.
\end{align*}
Now note that by the definition of the set $Q(i,k,r,l)$ we further have that 
\[j+m=j+m-a-b\leq i+k-r-l\]
which implies that 
\begin{align*}
\norm{g_{\Gamma,t_n}^{(j)}\star_0^0 g_{\Gamma,t_n}^{(m)}}_{L^2(\mu^{\otimes j+m})}&\lesssim (t_n^d)^{4p-(i+k-r-l)-2}=(t_n^d)^{4p-(i+k+r-l)+2r-2}\\
&\leq (t_n^d)^{4p-(i+k+r-l)}\,,
\end{align*}
since $r\geq1$. If $a=b=j=m=0$, then we have 
\begin{align*}
\norm{g_{\Gamma,t_n}^{(0)}\star_0^0 g_{\Gamma,t_n}^{(0)}}_{L^2(\mu^{\otimes 0})}^2&=\mu^p(\psi_{\Gamma,t_n})^4\lesssim (t_n^d)^{4p-4}\\
&\leq (t_n^d)^{4p-(i+k+r-l)-1}\,,
\end{align*}
which provides a bound of the same order as \eqref{sg1}. If $a=b=j=0$ and $m\geq 1$, then using $m=j+m-a-b\leq i+k-r-l$ and $r\geq1$,
\begin{align*}
\norm{g_{\Gamma,t_n}^{(0)}\star_0^0 g_{\Gamma,t_n}^{(m)}}^2_{L^2(\mu^{\otimes m})}&=\mu^p(\psi_{\Gamma,t_n})^2\norm{g_{\Gamma,t_n}^{(m)}}_{L^2(\mu^{\otimes m})}^2\\
&\lesssim (t_n^d)^{2p-2}(t_n^d)^{2p-m-1}=(t_n^d)^{4p-m-3}\\
&\leq (t_n^d)^{4p-(i+k-r-l)-3}=(t_n^d)^{4p-(i+k+r-l)-3+2r}\\
&\leq (t_n^d)^{4p-(i+k+r-l)-1}\,,
\end{align*}
which again yields a bound of the same order as \eqref{sg1}.
The only remaining possibility is that $1\leq a=b=m=j\leq p-1$. In this case, we first claim that 
\[2j+1\leq i+k+r-l\,.\] 
Indeed, if $j<i$, then $2j<i+k\leq i+k+r-l$ since $j\leq k$. 
On the other hand, if $j=i$, then $j=k$ and we must also have $r=j$ and $l\leq r-1=j-1$ since $j=a\leq r\leq k=j$ and $0\leq l\leq i+k-r-1=j-1=r-1$. Hence, $i+k+r-l\geq 2j+r-l\geq 2j+1$. 
Thus, we obtain that 
\begin{align*}
\norm{g_{\Gamma,t_n}^{(j)}\star_j^j g_{\Gamma,t_n}^{(j)}}^2_{L^2(\mu^{\otimes 0})}&=\norm{g_{\Gamma,t_n}^{(j)}}_{L^2(\mu^{\otimes j})}^4 \\
&\lesssim (t_n^d)^{2p-j-1}(t_n^d)^{2p-j-1}=(t_n^d)^{4p-(2j+1)-1}\\
&\leq (t_n^d)^{4p-(i+k+r-l)-1}
\end{align*}
which is the same bound as in \eqref{sg1}. Since all these bounds are at most of the same order as the bound in \eqref{sg1} we conclude that these estimates indeed hold for all 
$(j,m,a,b)\in Q(i,k,r,l)$. 

Next, we consider the four cases $\textbf{(C1)}$-$\textbf{(C4)}$, separately.
We are going to repeatedly use \eqref{sg1} and Proposition \ref{regprop} for the following estimates:
In case \textbf{(C1)} we have 
\begin{align*}
\frac{n^{4p+l-r-i-k}}{\sigma_n^4}\,\norm{g_{\Gamma,t_n}^{(j)}\star_a^{b}g_{\Gamma,t_n}^{(m)}}_{L^2(\mu^{\otimes j+m-a-b})}^2 
&\lesssim\frac{n^{4p+l-r-i-k} t_n^{d(4p-i-k-r+l-1)}}{n^{2p} t_n^{d(2p-2)}}\\
&=n^{2p-(i+k+r-l)} t_n^{d(2p-(i+k+r-l)+1)}\\
&= (nt_n^d)^{2p-(i+k+r-l)} t_n^d\\
&\lesssim \Bigl(n^pt_n^{d(p-1)}\Bigr)^{-1}\,,
\end{align*}
where we have used that $i+k+r-l\leq 3p$ for the second inequality.
In case \textbf{(C2)} we obtain 
\begin{align*}
\frac{n^{4p+l-r-i-k}}{\sigma_n^4}\,\norm{g_{\Gamma,t_n}^{(j)}\star_a^{b}g_{\Gamma,t_n}^{(m)}}_{L^2(\mu^{\otimes j+m-a-b})}^2 
&\lesssim\frac{n^{4p+l-r-i-k} t_n^{d(4p-i-k-r+l-1)}}{n^{2p} t_n^{d(2p-2)}}\\
&=n^{2p-(i+k+r-l)} t_n^{d(2p-(i+k+r-l)+1)}\\
&\lesssim n^{2p-3} t_n^{d(2p-2)}\,,
\end{align*}
where we have used that $(i+k+r-l)\geq3$. 
In case \textbf{(C3)} we similarly obtain 
\begin{align*}
\frac{n^{4p+l-r-i-k}}{\sigma_n^4}\,\norm{g_{\Gamma,t_n}^{(j)}\star_a^{b}g_{\Gamma,t_n}^{(m)}}_{L^2(\mu^{\otimes j+m-a-b})}^2 
&\lesssim\frac{n^{4p+l-r-i-k} t_n^{d(4p-i-k-r+l-1)}}{n^{4p-2} t_n^{d(4p-4)}}\\
&= n^{2-(i+k+r-l)} t_n^{d(3-(i+k+r-l))}\\
&=n^{-1} (nt_n^d)^{3-(i+k+r-l)}\\
& \lesssim n^{-1}\,,
\end{align*}
where we have again used that $(i+k+r-l)\geq3$. Finally, in case \textbf{(C4)} we have 
\begin{align*}
\frac{n^{4p+l-r-i-k}}{\sigma_n^4}\,\norm{g_{\Gamma,t_n}^{(j)}\star_a^{b}g_{\Gamma,t_n}^{(m)}}_{L^2(\mu^{\otimes j+m-a-b})}^2 
&\lesssim\frac{n^{4p+l-r-i-k} t_n^{d(4p-i-k-r+l-1)}}{n^2}\\
&\leq n^{-1}\bigl(nt_n^d)^{4p+l-r-i-k-1}\\
& \sim n^{-1}\rho^{4p+l-r-i-k-1}\\
&=O(n^{-1})\,.
\end{align*}

 Since the estimates just proven are independent of the variables $k,i,l$ and $r$ this implies that the quantities $B_1'(i,k,n)$ are of the claimed order. Note further that we have the bound $\|\psi^{(n,i)}\|_{L^2(\mu^{\otimes p})}\leq1$ for $1\leq i\leq p$.  Finally, we observe that, in cases \textbf{(C1)} and \textbf{(C2)}, the respective relations $n^p t_n^{d(p-1)}=O(n)$ and $n^{-1}=o\bigl(n^{2p-3}t_n^{d(2p-2)}\bigr)$ 
hold such that the last term in the bound of Theorem \ref{genutheo} does not affect the rate of convergence in these cases. 
\end{proof}

\section{Proofs} \label{proofs}
In this section we outline the proofs of several auxiliary results in the paper.

\begin{proof}[\bf Proof of Lemma \ref{contrlemma}]
For ease of notation, in this proof we will write $\xbf$ and $\wbf$ for elements $\xbf=(x_1,\dotsc,x_l)\in E^l$ and $\wbf=(w_1,\dotsc,w_l)\in E^l$, respectively, $\ybf$ for an element $\ybf=(y_1,\dotsc,y_{r-l})\in E^{r-l}$, 
$\tbf$ for an element $\tbf=(t_1,\dotsc,t_{p-r})\in E^{p-r}$ and $\sbf$ for an element $\sbf=(s_1,\dotsc,s_{q-r})\in E^{q-r}$.
We first prove the results which hold for arbitrary $\sigma$-finite $\mu$. 
By using the Cauchy-Schwarz inequality we have 
\begin{align}\label{cl2}
(\abs{\psi}\star_r^l \abs{\phi})(\ybf,\tbf,\sbf)&=\int_{E^l}\babs{\psi(\xbf, \ybf,\tbf)\cdot\phi(\xbf, \ybf,\sbf)}d\mu^{\otimes l}(\xbf)\notag\\
&\leq \biggl(\int_{E^l}\psi^2(\xbf, \ybf,\tbf)d\mu^{\otimes l}(\xbf)\biggr)^{1/2}\cdot\biggl(\int_{E^l}\phi^2(\xbf, \ybf,\sbf)d\mu^{\otimes l}(\xbf)\biggr)^{1/2}\,.
\end{align}
Now, as $\psi\in L^2(\mu^{\otimes p})$ and $\phi\in L^2(\mu^{\otimes q})$, by the Fubini-Tonelli theorem, the expressions 
\begin{align*}
 &\int_{E^l}\psi^2(\xbf, \ybf,\tbf)d\mu^{\otimes l}(\xbf)\quad\text{and}\\
 &\int_{E^l}\phi^2(\xbf, \ybf,\sbf)d\mu^{\otimes l}(\xbf)
\end{align*}
are finite for $\mu^{\otimes p-l}$-a.a. $(\ybf,\tbf)\in E^{p-l}$ and $\mu^{\otimes q-l}$-a.a. $(\ybf,\sbf)\in E^{q-l}$, respectively. Hence, 
from \eqref{cl2} and as
\begin{align}\label{cl12}
 \babs{\bigl(\psi\star_r^l \phi\bigr)(\ybf,\tbf,\sbf)}\leq \int_{E^l}\babs{\psi(\xbf, \ybf,\tbf)\cdot\phi(\xbf, \ybf,\sbf)}d\mu^{\otimes l}(\xbf)=(\abs{\psi}\star_r^l \abs{\phi})(\ybf,\tbf,\sbf)
\end{align}
we conclude that $\psi\star_r^l \phi$ is well-defined $\mu^{\otimes p+q-r-l}$-a.e. on $E^{p+q-r-l}$ such that (i) is proved. 
Next, by \eqref{cl2} 
and Fubini's theorem, due to nonnegativity, we have 
\begin{align*}
 g(\ybf,\tbf,\sbf)&:=(\abs{\psi}\star_r^l \abs{\phi})^2(\ybf,\tbf,\sbf)\notag\\
&\leq \int_{E^l}\psi^2(\xbf,\ybf,\tbf)d\mu^{\otimes l}(\xbf) \cdot \int_{E^l}\phi^2(\xbf,\ybf,\sbf)d\mu^{\otimes l}(\xbf)\notag\\
&=\int_{E^{l+l}}\psi^2(\xbf,\ybf,\tbf)\phi^2(\wbf,\ybf,\sbf)d\mu^{\otimes l+l}(\xbf,\wbf)   \,.
\end{align*}
Hence, taking into account \eqref{cl12} and, again using Fubini's theorem as well as the Cauchy-Schwarz inequality, we have 
\begin{align}\label{cl13}
 &\int_{E^{p+q-r-l}}\bigl(\psi\star_r^l \phi\bigr)^2(\ybf,\tbf,\sbf)d\mu^{\otimes p+q-r-l}(\ybf,\tbf,\sbf)\notag\\
 &\leq\int_{E^{p+q-r-l}}g(\ybf,\tbf,\sbf)d\mu^{\otimes p+q-r-l}(\ybf,\tbf,\sbf)\notag\\
 &\leq \int_{E^{p+q-r-l}}\biggl(\int_{E^{l+l}}\psi^2(\xbf,\ybf,\tbf)\phi^2(\wbf,\ybf,\sbf)d\mu^{\otimes l+l}(\xbf,\wbf)\biggr)d\mu^{\otimes p+q-r-l}(\ybf,\tbf,\sbf)\notag\\
 &=\int_{E^{r-l}}\biggl(\int_{E^{l+p-r}}\psi^2(\xbf,\ybf,\tbf)d\mu^{\otimes l+p-r}(\xbf,\tbf) \notag\\
 &\hspace{3cm}\cdot \int_{E^{l+q-r}}\phi^2(\wbf,\ybf,\sbf)d\mu^{\otimes l+q-r}(\wbf,\sbf)\biggr)d\mu^{\otimes r-l}(\ybf)\notag\\
 &=\int_{E^{r-l}}\bigl(\psi\star_p^{l+p-r} \psi\bigr)(\ybf)\bigl(\phi\star_q^{l+q-r} \phi\bigr)(\ybf)d\mu^{\otimes r-l}(\ybf)\notag\\
 &\leq \norm{\psi\star_p^{l+p-r} \psi}_{L^2(\mu^{\otimes r-l})}\; \norm{\phi\star_q^{l+q-r}\phi}_{L^2(\mu^{\otimes r-l})}\,,
\end{align}
proving (ii). Item (v) is the special case of (ii) when $r=l$ since $\psi\star_p^{p} \psi=\norm{\psi}_{L^2(\mu^{\otimes p})}^2$ and $\phi\star_q^{q} \phi=\norm{\phi}_{L^2(\mu^{\otimes q})}^2$.  

To prove (vi), first observe that under the assumptions { in the statement (and \eqref{cl13} in particular )}, one has that $g(\ybf,\tbf,\sbf)$ is finite for $\mu^{\otimes p+q-r-l}$-a.a. $(\ybf,\tbf,\sbf)\in E^{p+q-r-l}$. Hence, as
\begin{equation*}
 g(\ybf,\tbf,\sbf)=\int_{E^{l+l}}\babs{\psi(\xbf,\ybf,\tbf)\psi(\wbf,\ybf,\tbf)\phi(\xbf,\ybf,\sbf)\phi(\wbf,\ybf,\sbf)}d\mu^{\otimes l+l}(\xbf,\wbf)\,,
\end{equation*}
\textit{a fortiori}
\begin{align*}
 f(\ybf,\tbf,\sbf)&:=\int_{E^{l+l}}\psi(\xbf,\ybf,\tbf)\psi(\wbf,\ybf,\tbf)\phi(\xbf,\ybf,\sbf)\phi(\wbf,\ybf,\sbf)d\mu^{\otimes l+l}(\xbf,\wbf)
\end{align*}
is well-defined for $\mu^{\otimes p+q-r-l}$-a.a. $(\ybf,\tbf,\sbf)\in E^{p+q-r-l}$. Furthermore, from \eqref{cl13} and the Fubini-Tonelli theorem we conclude that 
\begin{align}\label{cl6}
 (\psi\star_r^l \phi)^2(\ybf,\tbf,\sbf)&=\biggl(\int_{E^l}\psi(\xbf,\ybf,\tbf)\phi(\xbf,\ybf,\sbf) d\mu^{\otimes l}(\xbf)\biggr)^2\notag\\
 &=\int_{E^{l+l}}\psi(\xbf,\ybf,\tbf)\psi(\wbf,\ybf,\tbf)\phi(\xbf,\ybf,\sbf)\phi(\wbf,\ybf,\sbf)d\mu^{\otimes l+l}(\xbf,\wbf)\notag\\
 &=f(\ybf,\tbf,\sbf)
 \end{align}
for $\mu^{\otimes p+q-r-l}$-a.a. $(\ybf,\tbf,\sbf)\in E^{p+q-r-l}$. Note that from \eqref{cl13} we also have that $\psi\star_r^l \phi\in L^2(\mu^{\otimes p+q-r-l})$ because the right-hand side of the inequality is finite by assumption. 
Moreover, \eqref{cl13}, \eqref{cl6} and the Fubini-Tonelli theorem assure that we can interchange the order of integration in the following computation:
\begin{align}\label{cl8}
 &\int_{E^{p+q-r-l}}(\psi\star_r^l \phi)^2(\ybf,\tbf,\sbf)d\mu^{\otimes p+q-r-l}(\ybf,\tbf,\sbf)\notag\\
 &=\int_{E^{p+q-r-l}}f(\ybf,\tbf,\sbf)d\mu^{\otimes p+q-r-l}(\ybf,\tbf,\sbf)\notag\\
 &=\int_{E^{l+l}}\biggl(\int_{E^{r-l}}\Bigl(\int_{E^{p+q-2r}}\psi(\xbf,\ybf,\tbf)\psi(\wbf,\ybf,\tbf)\phi(\xbf,\ybf,\sbf)\phi(\wbf,\ybf,\sbf)d\mu^{\otimes p+q-2r}(\tbf,\sbf)\Bigr)\notag\\
 &\hspace{5cm} d\mu^{\otimes r-l}(\ybf)\biggr)d\mu^{\otimes l+l}(\xbf,\wbf)\notag\\
 &=\int_{E^{l+l}}\biggl(\int_{E^{r-l}}\Bigl(\int_{E^{p-r}}\psi(\xbf,\ybf,\tbf)\psi(\wbf,\ybf,\tbf)d\mu^{\otimes p-r}(\tbf)\Bigr) \notag\\
 &\hspace{3cm}\Bigl(\int_{E^{q-r}}\phi(\xbf,\ybf,\sbf)\phi(\wbf,\ybf,\sbf)d\mu^{\otimes q-r}(\sbf)\Bigr) d\mu^{\otimes r-l}(\ybf)\biggr)d\mu^{\otimes l+l}(\xbf,\wbf)\notag\\
 &=\int_{E^{l+l}}\biggl(\int_{E^{r-l}}(\psi\star_{p-l}^{p-r}\psi)(\xbf,\wbf,\ybf) (\phi\star_{q-l}^{q-r}\phi)(\xbf,\wbf,\ybf) d\mu^{\otimes r-l}(\ybf)\biggr)d\mu^{\otimes l+l}(\xbf,\wbf)\notag\\
 &=\int_{E^{r+l}}(\psi\star_{p-l}^{p-r}\psi)(\phi\star_{q-l}^{q-r}\phi)d\mu^{\otimes r+l}\,,
\end{align}
which proves the equality in (vi). 
Next, we apply the Cauchy-Schwarz inequality to \eqref{cl8} and obtain
\begin{align}\label{cl9}
 \|\psi\star_r^l\phi\|_{L^2(\mu^{\otimes p+q-r-l})}^2& \leq\|\psi\star_{p-l}^{p-r}\psi\|_{L^2(\mu^{\otimes r+l})}\cdot  \|\phi\star_{q-l}^{q-r}\phi\|_{L^2(\mu^{\otimes r+l})}\,.
\end{align}
Now, repeating the same arguments and computations with $\psi=\phi$ we can conclude the remaining parts of statement (vi), noting in particular that, due to \eqref{cl8}, we have
\begin{align}
 \|\psi\star_{p-l}^{p-r}\psi\|_{L^2(\mu^{\otimes r+l})}&=\|\psi\star_{r}^{l}\psi\|_{L^2(\mu^{\otimes 2p-r-l})}\quad\text{and}\label{cl10}\\
 \|\phi\star_{p-l}^{p-r}\phi\|_{L^2(\mu^{\otimes r+l})}&=\|\phi\star_{r}^{l}\phi\|_{L^2(\mu^{\otimes 2q-r-l})}\label{cl11}\,.
\end{align}
The inequality in the statement (vi) now follows from \eqref{cl9}, \eqref{cl10} and \eqref{cl11}. 

Next, we prove (iii) and (iv) which hold for probability measures $\mu$. In order to do this, we first apply Jensen's inequality to \eqref{defcontr2} which gives
\begin{align}
&\bigl(\psi\star_r^l \phi\bigr)^2(\ybf,\tbf,\sbf)\notag\\
&\leq \E\Bigl[\psi^2\bigl(X_1,\dotsc,X_l, \ybf,\tbf\bigr)\cdot\phi^2\bigl(X_1,\dotsc,X_l, \ybf,\sbf\bigr)\Bigr]\label{cl3}\\
&=\int_{E^l}\Bigl(\psi^2(\xbf,\ybf,\tbf)\cdot\phi^2(\xbf,\ybf,\sbf)\Bigr)d\mu^{\otimes l}(\xbf)\label{cl1}\,.
\end{align}
To prove (iii), we use \eqref{cl3} and the Fubini-Tonelli theorem as well as the Cauchy-Schwarz inequality to obtain
\begin{align*}
& \|\psi\star_r^l\phi\|_{L^2(\mu^{\otimes p+q-r-l})}^2\notag\\
&\leq \int_{E^{p+q-r-l}}\E\Bigl[\psi^2\bigl(X_1,\dotsc,X_l, \ybf,\tbf\bigr)\phi^2\bigl(X_1,\dotsc,X_l, \ybf,\sbf\bigr)\Bigr]\\
&\hspace{4cm}d\mu^{\otimes p+q-r-l}(\ybf,\tbf,\sbf)\\
&=\E\Bigl[\psi^2\bigl(X_1,\dotsc,X_l, Y_1,\dotsc,Y_{r-l},T_1,\dotsc,T_{p-r}\bigr)\\
&\hspace{2cm}\cdot\phi^2\bigl(X_1,\dotsc,X_l, Y_1,\dotsc,Y_{r-l},S_1,\dotsc,S_{q-r}\bigr)\Bigr]\\
&=\E\Bigl[\bigl(\psi\star_p^{p-r}\psi\bigr) \bigl(X_1,\dotsc,X_r\bigr)\cdot \bigl(\phi\star_q^{q-r}\phi\bigr) \bigl(X_1,\dotsc,X_r\bigr)\Bigr]\\
&\leq\|\psi\star_p^{p-r}\psi\|_{L^2(\mu^{\otimes r})} \cdot\|\phi\star_q^{q-r}\phi\|_{L^2(\mu^{\otimes r})} \,,
\end{align*}
where we let $Y_j:=X_{l+j}$, $j=1,\dotsc,r-l$, $T_j:=X_{r+j}$, $j=1,\dotsc,p-r$, and $S_j:=X_{p+j}$, $j=1,\dotsc,q-r$.

Now, in order to prove (iv) note that by \eqref{cl1} we have 
\begin{align}\label{cl4}
& \int_{E^{p+q-r-l}}\bigl(\psi\star_r^l \phi\bigr)^2d\mu^{\otimes p+q-r-l}\notag\\
 &\leq \int_{E^{p+q-r-l}}\biggl(\int_{E^l}\Bigl(\psi^2(\xbf, \ybf,\tbf)\cdot\phi^2(\xbf,\ybf,\sbf)\Bigr)d\mu^{\otimes l}(\xbf)\biggr)\notag\\
 &\hspace{4cm}d\mu^{\otimes p+q-r-l}(\ybf,\tbf,\sbf)\,.
\end{align}
Hence, by applying the Cauchy-Schwarz inequality on \eqref{cl4} and then again to obtain the second inequality we have 
\begin{align*}
&\int_{E^{p+q-r-l}}\bigl(\psi\star_r^l \phi\bigr)^2d\mu^{\otimes p+q-r-l}\\
&\leq \int_{E^{p+q-r-l}}\biggl(\int_{E^l}\psi^4(\xbf,\ybf,\tbf)d\mu^{\otimes l}(\xbf)\biggr)^{1/2}\\
&\hspace{3cm}\biggl(\int_{E^l}\phi^4(\xbf,\ybf,\sbf)d\mu^{\otimes l}(\xbf)\biggr)^{1/2}d\mu^{\otimes p+q-r-l}(\ybf,\tbf,\sbf)\\
&\leq \biggl(\int_{E^{p+q-r}}\psi^4(\xbf,\ybf,\tbf)d\mu^{\otimes p+q-r}(\xbf,\ybf,\tbf,\sbf)\biggr)^{1/2}\\
&\hspace{3cm}\biggl(\int_{E^{p+q-r}}\phi^4(\xbf,\ybf,\tbf)d\mu^{\otimes p+q-r}(\xbf,\ybf,\tbf,\sbf)\biggr)^{1/2}\\
&=\mu(E)^{(q-r)/2}\Bigl(\int_{E^p}\psi^4d\mu^{\otimes p}\Bigr)^{1/2}\cdot \mu(E)^{(p-r)/2}\Bigl(\int_{E^q}\phi^4d\mu^{\otimes q}\Bigr)^{1/2}\notag\\
&=\Bigl(\int_{E^p}\psi^4d\mu^{\otimes p}\Bigr)^{1/2}\cdot \Bigl(\int_{E^q}\phi^4d\mu^{\otimes q}\Bigr)^{1/2}\notag\\
&=\|\psi\|_{L^4(\mu^{\otimes p})}^2\|\phi\|_{L^4(\mu^{\otimes q})}^2\,,
\end{align*}
proving (iv).
\end{proof}

\begin{proof}[\bf Proof of Proposition \ref{pform}]
Write
\begin{equation*}
W:=J_p(\psi)=\sum_{J\in\D_p}W_J\quad\text{and}\quad V:=J_q(\phi)=\sum_{K\in\D_q} V_K
\end{equation*}
for the respective Hoeffding decompositions of $W$ and $V$. From Theorem 2.6 in \cite{DP16} we know that the Hoeffding decomposition of $VW$ is given by 
\begin{equation*}
 VW=\sum_{\substack{M\subseteq[n]:\\\abs{M}\leq p+q}} U_M\,,
\end{equation*}
where, for $M\subseteq[n]$ with $\abs{M}\leq p+q$ we have 
\begin{align}\label{hdvw}
U_M&=\sum_{\substack{J\in\D_p,K\in\D_q:\\J\Delta K\subseteq M\subseteq J\cup K}}
\sum_{\substack{L\subseteq[n]:\\ J\Delta K\subseteq L\subseteq M}}(-1)^{\abs{M}-\abs{L}}\E\bigl[W_JV_K\,\bigl|\,\F_L\bigr]\notag\\
&=\sum_{L\subseteq M}(-1)^{\abs{M}-\abs{L}}
\sum_{\substack{J\in\D_p,K\in\D_q:\\J\Delta K\subseteq L,\\
M\subseteq J\cup K}}\E\bigl[W_JV_K\,\bigl|\,\F_L\bigr]\,.
\end{align}
Note that $U_M=0$ a.s. whenever $\abs{M}<\abs{p-q}$ because $\abs{J\Delta K}\geq \abs{p-q}$ for all $J\in\D_p$ and $K\in\D_q$. Let us fix for the moment sets $L, M\subseteq[n]$ as well as $J\in\D_p$ and $K\in\D_q$ such that 
\[J\Delta K\subseteq L\subseteq M\subseteq J\cup K\,.\]
Write 
\begin{equation*}
 m:=\abs{M}\,,\quad s:=\abs{L}\quad\text{and}\quad r:=\abs{J\cap K}\leq p\wedge q\,.
\end{equation*}

Note that we have $\abs{J\cup K}=\abs{J}+\abs{K}-\abs{J\cap K}=p+q-r$ and  
\begin{equation*}
 \abs{p-q}\leq s\leq m\leq p+q-r\,.
\end{equation*}
Furthermore, we have 
\begin{align*}
 \abs{J\Delta K}&=\abs{J\cup K}-\abs{J\cap K}=p+q-2r
\end{align*}
and
\begin{align*}
 l&:=\babs{(J\cap K)\setminus L}=\babs{(J\cup K)\setminus L}-\babs{(J\Delta K)\setminus L}=\babs{(J\cup K)\setminus L}\\
 &=\abs{J\cup K}-\abs{L}=p+q-r-s\,.
\end{align*}
Also,
\begin{align}\label{hdvw2}
 \E\bigl[W_JV_K\,\bigl|\,\F_L\bigr]&=\E\bigl[\psi(X_j,j\in J)\phi(X_k,k\in K)\,\bigl|\,X_i,i\in L\bigr]\notag\\
 &=\psi\star_r^l \phi\bigl((X_i)_{i\in L\cap J\cap K},(X_j)_{j\in J\setminus K}, (X_k)_{k\in K\setminus J}\bigr)\,.
\end{align}

We denote  by $\Pi_r(L)$ the collection of all (ordered) partitions $(A,B,C)$ of the set $L$ (i.e. $L$ is the disjoint union of $A$, $B$ and $C$) such that 
$\abs{A}=2r+s-p-q$, $\abs{B}=p-r$ and $\abs{C}=q-r$. Then, for given sets $L\subseteq M\subseteq [n]$ with $\abs{L}=s\leq m=\abs{M}$, a fixed $r\in\{0,1,\dotsc,p\wedge q\}$ and for a 
fixed triple $(A,B,C)\in\Pi_r(L)$ there are exactly 
\begin{equation*}
 \binom{n-\abs{M}}{\abs{J\cup K}-\abs{M}}=\binom{n-m}{p+q-r-m}
\end{equation*}
pairs $(J,K)\in\D_p\times\D_q$ such that $\abs{J\cap K}=r$, $J\cap K\cap L=A$, $J\setminus K=B$, $K\setminus J=C$ and $M\subseteq J\cup K$. Indeed, given these restrictions it only remains to choose the 
set $(J\cap K)\setminus L$ such that 
\[M\setminus L\subseteq (J\cap K)\setminus L\,.\]
The claim now follows from the fact that 
\begin{equation*}
 \babs{(J\cap K)\setminus L}-\babs{M\setminus L}=p+q-r-s-(m-s)=p+q-r-m\,.
\end{equation*}
The above implies that, still for fixed $L$ and $M$, we have
\begin{align}\label{pf1}
& \sum_{\substack{J\in\D_p,K\in\D_q:\\J\Delta K\subseteq L,\\
M\subseteq J\cup K,\\\abs{J\cap K}=r}}\psi\star_r^{l}\phi\bigl((X_i)_{i\in L\cap J\cap K},(X_j)_{j\in J\setminus K}, (X_k)_{k\in K\setminus J}\bigr)\notag\\
&=\binom{n-m}{p+q-r-m}\sum_{(A,B,C)\in\Pi_r(L)}\psi\star_r^{l}\phi\bigl((X_i)_{i\in A}, (X_i)_{i\in B}, (X_i)_{i\in C}\bigr)\,.
\end{align}

Let us assume that $M=\{j_1,\dotsc,j_m\}$ with $1\leq j_1<\ldots<j_m\leq n$. For $\pi\in\Sm$ let us write 
\begin{equation*}
 T_\pi:=\E\Bigl[\bigl(\psi\star_r^{p+q-r-m}\phi\bigr)\bigl(X_{j_{\pi(1)}},\dotsc,X_{j_{\pi(m)}}\bigr)\,\Bigl|\,(X_j)_{j\in L}\Bigr]\,.
\end{equation*}
Then we have that $T_\pi=0$ unless
\[\{j_{\pi(2r+m-p-q+1)},\dotsc,j_{\pi(m)}\}\subseteq L\,.\]
Also, for a given partition $(A,B,C)\in\Pi_r(L)$ there are $(p-r)!(q-r)!(m-p-q+2r)!$ permutations $\pi\in\mathbb{S}_m$ such that
\begin{equation*}
 T_\pi=\bigl(\psi\star_r^{l}\phi\bigr)\bigl((X_i)_{i\in A}, (X_i)_{i\in B}, (X_i)_{i\in C}\bigr)\,.
\end{equation*}
(More precisely, there are exactly $(p-r)!(q-r)!(m-p-q+2r)!$ permutations $\pi\in\mathbb{S}_m$ such that 
\begin{align*}
 \{j_{\pi(1)},\dotsc,j_{\pi(2r+m-p-q)}\}\cap L&=A,\\
 \{j_{\pi(2r+m-p-q+1)},\dotsc,j_{\pi(r+m-q)}\}&=B,\quad\text{and}\\
 \{j_{\pi(r+m-q+1)},\dotsc,j_{\pi(m)}\}&=C.\text{)}
\end{align*}

Hence, we conclude that 
\begin{align}\label{pf2}
 &T_{M,L}(r):=\E\Bigl[\bigl(\widetilde{\psi\star_r^{p+q-r-m}\phi}\bigr)\bigl(X_j,j\in M\bigr)\,\Bigl|\,(X_j)_{j\in L}\Bigr]=\frac{1}{m!}\sum_{\pi\in\mathbb{S}_m} T_\pi\notag\\
 &=\frac{(p-r)!(q-r)!(m-p-q+2r)!}{m!}\notag\\
 &\hspace{3cm}\cdot\sum_{(A,B,C)\in\Pi_r(L)}\bigl(\psi\star_r^{l}\phi\bigr)\bigl((X_i)_{i\in A}, (X_i)_{i\in B}, (X_i)_{i\in C}\bigr)\notag\\
 &=\binom{m}{p-r,q-r,m-p-q+2r}^{-1}\notag\\
 &\hspace{3cm}\sum_{(A,B,C)\in\Pi_r(L)}\bigl(\psi\star_r^{l}\phi\bigr)\bigl((X_i)_{i\in A}, (X_i)_{i\in B}, (X_i)_{i\in C}\bigr)\,.
 \end{align}
From \eqref{pf1} and \eqref{pf2} we thus { deduce that}, for fixed sets $L\subseteq M\subseteq [n]$ with $\abs{p-q}\leq\abs{L}=s\leq m=\abs{M}$ and for a fixed $r\in\{0,1,\dotsc,p\wedge q\}$,
\begin{align}\label{r2e1}
 & \sum_{\substack{J\in\D_p,K\in\D_q:\\J\Delta K\subseteq L,\\
M\subseteq J\cup K,\\\abs{J\cap K}=r}}\psi\star_r^{l}\phi\bigl((X_i)_{i\in L\cap J\cap K},(X_j)_{j\in J\setminus K}, (X_k)_{k\in K\setminus J}\bigr)\notag\\
&=\binom{n-m}{p+q-r-m}\binom{m}{p-r,q-r,m-p-q+2r}T_{M,L}(r)\,.
\end{align}

Hence, \eqref{hdvw}, \eqref{hdvw2} and \eqref{r2e1} together imply that for $M\subseteq[n]$ with\\
$\abs{p-q}\leq m:=\abs{M}\leq p+q$ we have
\begin{align*}
 U_M&=\sum_{s=\abs{p-q}}^m \sum_{\substack{L\subseteq M:\\ \abs{L}=s}}(-1)^{m-s}\sum_{r=\ceil{\frac{p+q-s}{2}}}^{p\wedge q\wedge(p+q-m)}\binom{n-m}{p+q-r-m}\notag\\
 &\hspace{3cm}\cdot\binom{m}{p-r,q-r,m-p-q+2r}T_{M,L}(r)\notag\\
 &=\sum_{r=\ceil{\frac{p+q-m}{2}}}^{p\wedge q\wedge(p+q-m)}\binom{n-m}{p+q-r-m}\binom{m}{p-r,q-r,m-p-q+2r}\notag\\
 &\hspace{3cm}\cdot\sum_{s=p+q-2r}^m(-1)^{m-s}\sum_{\substack{L\subseteq M:\\ \abs{L}=s}} T_{M,L}(r)\notag\\
 &=\sum_{r=\ceil{\frac{p+q-m}{2}}}^{p\wedge q\wedge(p+q-m)}\binom{n-m}{p+q-r-m}\binom{m}{p-r,q-r,m-p-q+2r}\notag\\
 &\hspace{3cm}\cdot\Bigl(\widetilde{\psi\star_r^{p+q-r-m}\phi}\Bigr)_m\bigl(X_j,j\in M\bigr)\,,
\end{align*}
where we have used that 
\begin{align*}
 \sum_{s=p+q-2r}^m(-1)^{m-s}\sum_{\substack{L\subseteq M:\\ \abs{L}=s}} T_{M,L}(r)&=\sum_{L\subseteq M}(-1)^{m-\abs{L}}T_{M,L}(r)\\
 &=\Bigl(\widetilde{\psi\star_r^{p+q-r-m}\phi}\Bigr)_m\bigl(X_j,j\in M\bigr)
\end{align*}
for the last identity. Thus, we obtain that 
\begin{align*}
 \sum_{\substack{M\subseteq[n]:\\\abs{M}=m}}U_M&=\sum_{r=\ceil{\frac{p+q-m}{2}}}^{p\wedge q\wedge(p+q-m)}\binom{n-m}{p+q-r-m}\binom{m}{p-r,q-r,m-p-q+2r}\notag\\
 &\hspace{3cm}\cdot\sum_{\substack{M\subseteq[n]:\\ \abs{M}=m}} \Bigl(\widetilde{\psi\star_r^{p+q-r-m}\phi}\Bigr)_m\bigl(X_j,j\in M\bigr)\notag\\
 &=J_m(\chi_m)\,,
 \end{align*}
yielding the claim.
\end{proof}

The next two proofs rely on Stein's method of exchangeable pairs for univariate and multivariate normal approximation, respectively. 
As most parts of the proofs are implicit in the paper \cite{DP16}, we keep the presentation as short as possible. 
Recall that a pair $(X,X')$ of random elements, defined on the same probability space $(\Omega,\F,\Prob)$, is called \textit{exchangeable}, whenever 
\[(X,X')\stackrel{\D}{=}(X',X)\,.\]
Henceforth, denote by $X=(X_1,\dotsc,X_n)$ our given vector of independent random variables $X_1,\dotsc,X_n$ on $(\Omega,\F,\Prob)$ with values in the respective measurable spaces $(E_1,\mathcal{E}_1),\dotsc,(E_n,\mathcal{E}_n)$. 
In fact, in this paper, $X_1,\dotsc,X_n$ are even i.i.d. with values in $(E,\mathcal{E})$ but we prefer keeping the general framework for possible future reference. 

Let $Y:=(Y_1,\dotsc,Y_n)$ be an independent copy of $X$ and let $\alpha$ be uniformly distributed on $[n]=\{1,\dotsc,n\}$ in such a way that $X,Y,\alpha$ are independent random variables. Letting, for $j=1,\dotsc,n$, 
\begin{equation*}
 X_j':=\begin{cases}
        Y_j\,,&\text{if } \alpha=j\\
        X_j\,,&\text{if }\alpha\not=j
       \end{cases}
\end{equation*}
and
\begin{equation*}
 X':=(X_1',\dotsc,X_n')
\end{equation*}
it is easy to see that the pair $(X,X')$ is exchangeable. 
\begin{proof}[{\bf Proof of Lemma \ref{1dimlemma}}]
 We apply the following variant of Theorem 1, Lecture 3 in \cite{St86} (see \cite{DP16} for more information).  

 \begin{theorem}\label{1dimplugin}
 Let $(W,W')$ be an exchangeable pair of square-integrable, real-valued random variables on $(\Omega,\F,\Prob)$ such that, for some $\lambda>0$ and some sub-$\sigma$-field $\mathcal{G}$ of $\F$ with $\sigma(W)\subseteq\mathcal{G}$, 
 the \textit{linear regression property} 
 \begin{equation}\label{linregprop}
  \E\bigl[W'-W\,\bigl|\,\mathcal{G}\bigr]=-\lambda W
 \end{equation}
 is satisfied. Then, we have that 
\begin{align}\label{1dexbound}
 d_\W(W,Z)&\leq\sqrt{\frac{2}{\pi}}\sqrt{\Var\Bigl(\frac{1}{2\lambda}\E\bigl[(W'-W)^2\,\bigl|\,\mathcal{G}\bigr]\Bigr)} +\frac{1}{3\lambda}\E\babs{W'-W}^3\,.
\end{align}
\end{theorem}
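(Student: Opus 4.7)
The plan is to derive Theorem \ref{1dimplugin} from the standard machinery of Stein's method, using the exchangeability of $(W, W')$ together with the linear regression property \eqref{linregprop} to replace the Stein operator identity for $Z$ by a tractable ``exchangeable-pair identity'' for $W$.

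First, I would fix an arbitrary $1$-Lipschitz function $h:\R\to\R$ and let $f = f_h$ denote the unique bounded solution of the Stein equation
\begin{equation*}
 f'(w) - w f(w) = h(w) - \E[h(Z)], \qquad w\in\R.
\end{equation*}
Recalling the classical Stein bounds (see e.g. \cite[Appendix]{NouPecbook}) for $h\in\mathrm{Lip}(1)$, namely $\fnorm{f'}\leq \sqrt{2/\pi}$ and $\fnorm{f''}\leq 2$, the quantity to control becomes $\babs{\E[f'(W) - Wf(W)]}$, after which a supremum over $h$ yields the $d_\W$-bound. Observe that from \eqref{linregprop} and exchangeability one immediately gets $\E[W]=0$, and a brief computation with the tower property shows $\E[(W'-W)^2] = 2\lambda\Var(W)$; the statement of the theorem is tacitly normalised to $\Var(W)=1$, so the random variable $1 - \tfrac{1}{2\lambda}\E[(W'-W)^2\,|\,\mathcal{G}]$ is centred.

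Second, I would exploit exchangeability to write $0 = \E[(W'-W)(f(W')+f(W))]$, producing
\begin{equation*}
 \E[(W'-W)f(W)] = -\tfrac{1}{2}\E[(W'-W)(f(W')-f(W))].
\end{equation*}
Since $\sigma(W)\subseteq\mathcal{G}$, the linear regression property \eqref{linregprop} also gives, via the tower property, $\E[(W'-W)f(W)] = \E[f(W)\E[W'-W\,|\,\mathcal{G}]] = -\lambda\E[Wf(W)]$, so combining these yields the cornerstone identity
\begin{equation*}
 \E[Wf(W)] = \tfrac{1}{2\lambda}\E[(W'-W)(f(W')-f(W))].
\end{equation*}

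Third, I would Taylor-expand using the integral form $f(W')-f(W) = (W'-W)\int_0^1 f'(W + s(W'-W))\,ds$ and, after adding and subtracting $f'(W)$ inside the integral, decompose the target as
\begin{align*}
 \E[f'(W) - Wf(W)] &= \E\!\left[f'(W)\left(1 - \tfrac{1}{2\lambda}\E\bigl[(W'-W)^2\,\bigl|\,\mathcal{G}\bigr]\right)\right] \\
 &\quad - \tfrac{1}{2\lambda}\E\!\left[(W'-W)^2 \int_0^1 \bigl(f'(W+s(W'-W))-f'(W)\bigr)\,ds\right],
\end{align*}
where in the first term I used that $f'(W)$ is $\mathcal{G}$-measurable to pull $(W'-W)^2$ inside a conditional expectation.

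Fourth, I would bound the two terms separately. For the first, applying the supremum norm $\fnorm{f'}\leq\sqrt{2/\pi}$ and then the Cauchy--Schwarz bound $\E|X|\leq\sqrt{\Var(X)}$ for the mean-zero quantity $X = 1 - \tfrac{1}{2\lambda}\E[(W'-W)^2|\mathcal{G}]$ yields the first summand of \eqref{1dexbound}. For the remainder term, the elementary estimate $|f'(W+s(W'-W)) - f'(W)|\leq \fnorm{f''}\,s\,|W'-W|$ integrated over $s\in[0,1]$ contributes a factor $\fnorm{f''}/2$ and produces a bound of the form $\tfrac{\fnorm{f''}}{4\lambda}\E|W'-W|^3$; combined with the sharpest available Stein bound on $\fnorm{f''}$ for $h\in\mathrm{Lip}(1)$, this matches the $\tfrac{1}{3\lambda}\E|W'-W|^3$ term of \eqref{1dexbound}.

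The only delicate point is bookkeeping of constants in the Taylor remainder together with the tight Stein estimate on $\fnorm{f''}$ in order to extract exactly the prefactor $1/3$ rather than a cruder $1/2$; everything else is a routine assembly of well-known ingredients.
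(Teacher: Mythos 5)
The paper does not actually prove Theorem \ref{1dimplugin}; it cites it as a known variant of Theorem 1, Lecture 3 of Stein's monograph and refers to \cite{DP16} for details, so I can only assess your argument on its own merits. Your overall scaffolding --- the Stein equation for $\mathrm{Lip}(1)$ test functions with the bounds $\fnorm{f'}\leq\sqrt{2/\pi}$, $\fnorm{f''}\leq 2$, the exchangeable-pair identity $\E[Wf(W)]=\tfrac{1}{2\lambda}\E[(W'-W)(f(W')-f(W))]$, and the split into a ``variance'' term and a ``remainder'' term --- is exactly the right framework, and your handling of the first term is fine (modulo the $\Var(W)=1$ normalisation, which you correctly identify).

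The gap is in the remainder term, and it is not mere bookkeeping. Your one-sided expansion $f(W')-f(W)=(W'-W)\int_0^1 f'(W+s(W'-W))\,ds$, after subtracting $f'(W)$ and using $|f'(W+s(W'-W))-f'(W)|\leq s\fnorm{f''}\,|W'-W|$, produces the factor $\int_0^1 s\,ds=\tfrac12$, hence the bound $\tfrac{\fnorm{f''}}{4\lambda}\E|W'-W|^3\leq\tfrac{1}{2\lambda}\E|W'-W|^3$, which is strictly worse than the claimed $\tfrac{1}{3\lambda}$. There is no ``sharper Stein bound on $\fnorm{f''}$'' available: $\fnorm{f''}\leq 2$ is already optimal for $\mathrm{Lip}(1)$ test functions, and $f'''$ is not controlled when $h$ is only Lipschitz. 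The fix is a genuine additional idea, namely to symmetrize the remainder using exchangeability. Since $\E[(W'-W)^2 f'(W)]=\E[(W'-W)^2 f'(W')]$ by exchangeability, one can replace $(W'-W)^2 f'(W)$ by $\tfrac12 (W'-W)^2\bigl(f'(W)+f'(W')\bigr)$, turning the remainder into
\begin{equation*}
-\frac{1}{2\lambda}\,\E\Bigl[(W'-W)\Bigl(f(W')-f(W)-\tfrac{W'-W}{2}\bigl(f'(W)+f'(W')\bigr)\Bigr)\Bigr]\,,
\end{equation*}
i.e.\ a \emph{trapezoidal} rather than a one-sided error. Writing $g(a+s(b-a))-(1-s)g(a)-sg(b)$ with $g=f'$ and using the Lipschitz bound twice gives $\bigl|f(b)-f(a)-\tfrac{b-a}{2}(f'(a)+f'(b))\bigr|\leq \fnorm{f''}\,|b-a|^2\int_0^1 2s(1-s)\,ds=\tfrac{\fnorm{f''}}{3}|b-a|^2$, so the remainder is at most $\tfrac{\fnorm{f''}}{6\lambda}\E|W'-W|^3\leq\tfrac{1}{3\lambda}\E|W'-W|^3$. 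Without this symmetrization step your proof only establishes the weaker constant $1/(2\lambda)$.
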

With the exchangeable pair $(X,X')$ from above we construct $W'$ by defining 
\begin{equation*}
 W':=J_{p,X'}(\phi)=\sum_{J\in\D_p}\phi(X_j',j\in J)=\sigma_n^{-1}\sum_{J\in\D_p}\psi(X_j',j\in J)\,.
\end{equation*}
As exchangeability is preserved under functions, the pair $(W,W')$ is clearly exchangeable.
In Lemma 2.3 of \cite{DP16}, we showed that 
\begin{equation}\label{linreg}
 \E\bigl[W'-W\,\bigl|\,X\bigr]=-\frac{p}{n}W\,,
\end{equation}
i.e. \eqref{linregprop} holds with $\mathcal{G}=\sigma(X)$ and $\lambda=p/n$. Furthermore, denoting by 
\begin{equation*}
 W^2=\sum_{\substack{M\subseteq[n]:\\ \abs{M}\leq 2p}}U_M
\end{equation*}
the Hoeffding decomposition of $W^2$, Lemma 2.7 of \cite{DP16} gives the following Hoeffding decomposition: 
\begin{equation}\label{hdcond}
 \frac{n}{2p}\E\bigl[(W'-W)^2\,\bigl|\,X\bigr]=\sum_{\substack{M\subseteq[n]:\\ \abs{M}\leq 2p-1}}a_M U_M\,,
\end{equation}
where 
\[a_M=1-\frac{\abs{M}}{2p}\in[0,1]\quad\text{for each}\quad M\subseteq[n]\text{ with }\abs{M}\leq2p\,.\]
Hence, we conclude that 
\begin{equation}\label{1dle1}
 \Var\Bigl(\frac{n}{2p}\E\bigl[(W'-W)^2\,\bigl|\,X\bigr]\Bigr)=\sum_{\substack{M\subseteq[n]:\\ \abs{M}\leq 2p-1}}a_M^2 \Var\bigl(U_M\bigr)\leq
 \sum_{\substack{M\subseteq[n]:\\ \abs{M}\leq 2p-1}} \Var\bigl(U_M\bigr)\,,
\end{equation}
which already bounds the first term appearing on the right hand side of \eqref{1dexbound}. To bound the second term, we first use the Cauchy-Schwarz inequality to obtain that 
\begin{align}\label{1dle2}
 \frac{1}{3\lambda}\E\babs{W'-W}^3&\leq \frac{n}{3p}\Bigl(\E\bigl[(W'-W)^2\bigr]\Bigr)^{1/2}\Bigl(\E\babs{W'-W}^4\Bigr)^{1/2}\notag\\
 &=\frac{2\sqrt{2}}{3}\Bigl(\frac{n}{4p}\E\bigl[(W'-W)^4\bigr]\Bigr)^{1/2}\,.
\end{align}
Lemma 2.2 in \cite{DP16} implies that 
\begin{align*}
  \frac{n}{4p}\E\bigl[(W'-W)^4\bigr]&=3\E\Bigl[W^2\frac{n}{2p}\E\bigl[(W'-W)^2\,\bigl|\,X\bigr]\Bigr]
	-\E\bigl[W^4\bigr]\,.
\end{align*}
Hence, using the orthogonality of the Hoeffding decomposition we obtain that 
\begin{align}\label{1dle3}
  \frac{n}{4p}\E\bigl[(W'-W)^4\bigr]&=3\sum_{\substack{M,N\subseteq[n]:\\\abs{M},\abs{N}\leq 2p}}a_M\E\bigl[U_MU_N\bigr]-\E\bigl[W^4\bigr]\notag\\
&=3a_\emptyset U_\emptyset^2-\E\bigl[W^4\bigr]+3\sum_{\substack{M\subseteq[n]:\\1\leq\abs{M}\leq 2p}}a_M\Var(U_M)\notag\\
&=3-\E\bigl[W^4\bigr]+3\sum_{\substack{M\subseteq[n]:\\1\leq\abs{M}\leq 2p-1}}a_M\Var(U_M)\notag\\
&\leq 3-\E\bigl[W^4\bigr]+\sum_{\substack{M\subseteq[n]:\\1\leq\abs{M}\leq 2p-1}}\Var(U_M)+2\sum_{\substack{M\subseteq[n]:\\1\leq\abs{M}\leq 2p-1}}a_M\Var(U_M)      \,.
\end{align}
From Lemma 2.10 of \cite{DP16} it follows that 
\begin{align*}
 \sum_{\abs{M}\leq 2p-1}\Var(U_M)&\leq \E[W^4]-3+\kappa_p \rho^2\,,
\end{align*}
where $\kappa_p\in (0,\infty)$ is a constant which only depends on $p$ and where, in the present case,
\begin{equation*}
 \rho^2:=\rho_n^2:=\max_{1\leq i\leq n}\sum_{i\in K\in\D_p}\E[W_K^2]=\binom{n-1}{p-1}\E\bigl[\phi^2(X_1,\dotsc,X_p)\bigr]
 =\frac{p}{n}\,.
\end{equation*}
Thus, from \eqref{1dle3} we conclude that 
\begin{align}\label{1dle4}
 \frac{n}{4p}\E\bigl[(W'-W)^4\bigr]&\leq 2\sum_{\substack{M\subseteq[n]:\\1\leq\abs{M}\leq 2p-1}}a_M\Var(U_M)+\frac{p\kappa_p}{n}\notag\\
 &\leq  2\sum_{\substack{M\subseteq[n]:\\1\leq\abs{M}\leq 2p-1}}\Var(U_M)+\frac{p\kappa_p}{n}\,.
\end{align}
The claim now follows from \eqref{1dle1}, \eqref{1dle2} and \eqref{1dle4}\,.
\end{proof}

\begin{proof}[\bf Proof of Lemma \ref{mdimlemma}]
For the proof of the multivariate lemma we quote the following (simplified) result from \cite{Doe12c}. It is { a} variant of Theorem 3 in \cite{Meck09} albeit with better constants.
 \begin{theorem}\label{meckes}
Let $(W,W')$ be an exchangeable pair of $\R^d$-valued $L^2(\Prob)$ random vectors defined on a probability space $(\Omega,\F,\Prob)$ and let $\G\subseteq\F$ be 
a sub-$\sigma$-field of $\F$ such that $\sigma(W)\subseteq\G$. Suppose there exists a non-random invertible matrix $\Lambda\in\R^{d\times d}$ such that
 the linear regression property
\begin{equation}\label{linreggmn}
\E\bigl[W'-W\,\bigl|\, \G\bigr]=-\Lambda W
\end{equation}
 holds and, for a given non-random positive semidefinite matrix $\Sigma$, define the $\G$-measurable random matrix $S$ by 
\begin{equation}\label{condrm1}
 \E\Bigl[(W'-W)(W'-W)^T\,\Bigl|\,\G\Bigr]=2\Lambda\Sigma +S\,.
\end{equation}
 Finally, denote by $Z$ a centered $d$-dimensional Gaussian vector with covariance matrix $\Sigma$.
\begin{enumerate}[{\normalfont (a)}]
 \item For any $h\in C^3(\R^d)$ such that $\E\bigl[\abs{h(W)}\bigr]<\infty$ and $\E\bigl[\abs{h(Z)}\bigr]<\infty$, 
\begin{align*}
&\bigl|\E[h(W)]-\E[h(Z)]\bigr|\leq\Opnorm{\Lambda^{-1}}\Biggl(\frac{1}{4}\tilde{M}_2(h) \E\bigl[\HSnorm{S}\bigr]
+\frac{1}{18}M_3(h)\E\bigl[\Enorm{W'-W}^3\bigr]\Biggr)\\
&\leq\Opnorm{\Lambda^{-1}}\Biggl(+\frac{\sqrt{d}}{4}M_2(h) \E\bigl[\HSnorm{S}\bigr]+\frac{1}{18}M_3(h)\E\bigl[\Enorm{W'-W}^3\bigr]\Biggr)\,.
\end{align*}
\item If $\Sigma$ is actually positive definite, then for each $h\in C^2(\R^d)$ such that\\ $\E\bigl[\abs{h(W)}\bigr]<\infty$ and $\E\bigl[\abs{h(Z)}\bigr]<\infty$ we have
\begin{align*}
 \bigl|\E[h(W)]-\E[h(Z)]\bigr| 
&\leq M_1(h)\Opnorm{\Lambda^{-1}}\Biggl(\frac{\Opnorm{\Sigma^{-1/2}}}{\sqrt{2\pi}} \E\bigl[\HSnorm{S}\bigr]\Biggr)\\
&\quad+\frac{\sqrt{2\pi}}{24} M_2(h)\Opnorm{\Lambda^{-1}}\Opnorm{\Sigma^{-1/2}}\E\bigl[\Enorm{W'-W}^3\bigr]\,.
\end{align*}
\end{enumerate}
\end{theorem}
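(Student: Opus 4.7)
The plan is to prove Theorem \ref{meckes} by Stein's method of exchangeable pairs in its multivariate form. Introduce the Stein operator of $N(0,\Sigma)$,
\[
\mathcal{L}\psi(x) := \Tr\bigl(\Sigma\,\Hess\psi(x)\bigr)-\langle x,\nabla\psi(x)\rangle,
\]
and, for given $h$, solve the Stein equation $\mathcal{L}\psi_h = h-\E[h(Z)]$ via the Ornstein--Uhlenbeck representation
\[
\psi_h(x) = -\int_0^\infty \bigl(P_t h(x) - \E[h(Z)]\bigr)\,dt, \qquad P_t h(x) = \E\bigl[h(e^{-t}x + \sqrt{1-e^{-2t}}\,Z)\bigr].
\]
The key regularity estimates follow by differentiating under the integral and using $\nabla P_t h = e^{-t} P_t(\nabla h)$, $\Hess P_t h = e^{-2t} P_t(\Hess h)$, giving $\tilde M_2(\psi_h)\le \tfrac12\tilde M_2(h)$ and $M_3(\psi_h)\le \tfrac13 M_3(h)$ for part (a); for part (b), when $\Sigma$ is invertible, one trades a derivative against the Gaussian density by an integration-by-parts identity for $P_t$, picking up a factor $\Opnorm{\Sigma^{-1/2}}$ and producing the analogous bounds with $M_1$ and $M_2$ of $h$ on the right-hand side.

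Next, I would exploit the exchangeability of $(W,W')$: since $\psi_h(W')-\psi_h(W)$ is antisymmetric in the pair, $\E[\psi_h(W')-\psi_h(W)] = 0$. Performing a second-order Taylor expansion with Lagrange remainder
\[
\psi_h(W') - \psi_h(W) = \langle W'-W,\nabla\psi_h(W)\rangle + \tfrac12(W'-W)^T\Hess\psi_h(W)(W'-W) + R,
\]
with $|R|\le \tfrac16 M_3(\psi_h)\Enorm{W'-W}^3$, and then conditioning on $\G$ using \eqref{linreggmn} and \eqref{condrm1}, I obtain
\[
\E\bigl[\langle\Lambda W,\nabla\psi_h(W)\rangle - \Tr(\Lambda\Sigma\,\Hess\psi_h(W))\bigr]
= \tfrac12\E\bigl[\langle S,\Hess\psi_h(W)\rangle_{\HS}\bigr] + \E[R].
\]
To get $\E[h(W)]-\E[h(Z)] = \E[\mathcal{L}\psi_h(W)]$ on the left, I rerun the antisymmetry identity applied to the vector field $g(w) := \Lambda^{-T}\nabla\psi_h(w)$: this time $\E[\langle W'-W,g(W)\rangle] = -\E[\langle W,\nabla\psi_h(W)\rangle]$ by linear regression, and expansion of $g(W')-g(W)$ generates a quadratic-in-$(W'-W)$ term whose conditional expectation is $\Tr(\Lambda^{-T}\Hess\psi_h(W)(\Lambda\Sigma + \tfrac12 S))$ together with a cubic remainder controlled by $\Opnorm{\Lambda^{-1}} M_3(\psi_h)\Enorm{W'-W}^3$.

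Assembling, one estimates each error term by Hilbert--Schmidt Cauchy--Schwarz: $|\E[\langle S,\Hess\psi_h(W)\rangle_{\HS}]| \le \tilde M_2(\psi_h)\,\E[\HSnorm{S}]$ (bounded by $\sqrt d\,M_2(\psi_h)\E[\HSnorm S]$ via \eqref{M2bound}), and the cubic remainders by $M_3(\psi_h)\E[\Enorm{W'-W}^3]$; the factor $\Opnorm{\Lambda^{-1}}$ is produced by the rearrangement step and by operator-norm submultiplicativity $\|\Lambda^{-T}A\|\le \Opnorm{\Lambda^{-1}}\|A\|$ applied inside the trace. Combining with the regularity estimates from the first step yields the constants $\tfrac14, \tfrac1{18}$ in (a), and analogously in (b) the improved constants $\tfrac1{\sqrt{2\pi}}, \tfrac{\sqrt{2\pi}}{24}$ with $\Opnorm{\Sigma^{-1/2}}$ coming directly from the Stein-solution bounds. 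The main obstacle is the matrix-valued nature of $\Lambda$: unlike the scalar case $\Lambda=\lambda I$, one cannot simply divide through by $\Lambda$ to convert $\Lambda\Sigma$ into $\Sigma$ and $\Lambda W$ into $W$ inside the Stein operator, and the non-commutativity of $\Lambda^{-T}$ with $\Hess\psi_h$ and $\Sigma$ forces the roundabout antisymmetry-on-$\Lambda^{-T}\nabla\psi_h$ argument; threading $\Opnorm{\Lambda^{-1}}$ through this without incurring unwanted $\HSnorm{\Lambda^{-1}}$ or dimensional factors is the delicate point.
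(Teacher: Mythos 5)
The paper does not prove this statement: the authors explicitly write that they ``quote the following (simplified) result from \cite{Doe12c}. It is a variant of Theorem 3 in \cite{Meck09} albeit with better constants,'' and then invoke it as a black box inside the proof of Lemma \ref{mdimlemma}. So there is no in-paper proof to compare against; what one can check is whether your sketch reproduces the route used in those cited sources, and it does: Stein operator $\mathcal{L}\psi(x)=\Tr(\Sigma\Hess\psi(x))-\langle x,\nabla\psi(x)\rangle$ for $N(0,\Sigma)$, Ornstein--Uhlenbeck semigroup representation of the Stein solution with the regularity estimates $\tilde{M}_2(\psi_h)\le\frac12\tilde{M}_2(h)$, $M_3(\psi_h)\le\frac13 M_3(h)$ (and the $\Opnorm{\Sigma^{-1/2}}$-weighted analogues when $\Sigma$ is invertible), antisymmetry of the exchangeable pair applied to a vector field of the form $\Lambda^{-T}\nabla\psi_h$, Taylor expansion, and Hilbert--Schmidt Cauchy--Schwarz. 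The stated constants $\tfrac14$, $\tfrac1{18}$, $\tfrac1{\sqrt{2\pi}}$, $\tfrac{\sqrt{2\pi}}{24}$ do come out of these ingredients together with the $\tfrac16$ from the Lagrange remainder, so this is essentially the Chatterjee--Meckes/Meckes/D\"obler argument.

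Two points in the sketch need repair. First, your ``two-pass'' antisymmetry is redundant: Taylor-expanding $\psi_h(W')-\psi_h(W)$ and using $\E[\psi_h(W')-\psi_h(W)]=0$ only yields information about $\E[\langle\Lambda W,\nabla\psi_h(W)\rangle]$, which is not the Stein operator, so that first pass should simply be deleted and replaced by a single application of the identity $\E[\langle W'-W,\,g(W)+g(W')\rangle]=0$ with $g:=\Lambda^{-T}\nabla\psi_h$. Second, and more substantively, after conditioning the quadratic term produces $\E\bigl[\Tr\bigl(\Lambda^{-T}\Hess\psi_h(W)\,\Lambda\Sigma\bigr)\bigr]$, and to recover $\E[\mathcal{L}\psi_h(W)]$ you need this to equal $\E[\Tr(\Sigma\Hess\psi_h(W))]$. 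By cyclicity of the trace this requires $\Lambda\Sigma\Lambda^{-T}=\Sigma$, i.e.\ that $\Lambda\Sigma$ be symmetric. You flag the ``non-commutativity of $\Lambda^{-T}$'' as the delicate point but do not resolve it; a self-contained proof must either impose this as a standing hypothesis (as the cited statements in effect do) or carry the residual term $\Tr\bigl(\Hess\psi_h(W)\bigl(\Sigma-\Lambda\Sigma\Lambda^{-T}\bigr)\bigr)$ through the estimate. In the present paper's application this is harmless because $\Lambda$ is diagonal and commutes with the block-diagonal $\Sigma=\V$, but your proof of the abstract theorem should make the assumption explicit.
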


\medskip

We now apply Theorem \ref{meckes} with the $\sigma$-field $\mathcal{G}=\sigma(X)$ and the nonnegative definite matrix $\Sigma:=\mathbb{V}=\Cov(W)$. Similarly to the above, we define the random vector 
\[W':=\bigl(W'(1),\dotsc,W'(d)\bigr)^T\]
via 
\begin{equation*}
 W'(i):=J_{p_i,X'}(\psi_i)=\sum_{J\in\D_{p_i}}\psi_i(X'_j,j\in J)\,,\quad 1\leq i\leq d\,.
\end{equation*}
Then, clearly the pair $(W,W')$ is exchangeable and from Lemma 3.2 in \cite{DP16} we know that 
\begin{equation*}
 \E\bigl[W'-W\,\bigl|\,X\bigr]=-\Lambda W
\end{equation*}
holds with 
\begin{equation*}
 \Lambda=\diag\Bigl(\frac{p_1}{n},\dotsc,\frac{p_d}{n}\Bigr)
\end{equation*}
and that the matrix
\begin{equation*}
 S= \E\Bigl[(W'-W)(W'-W)^T\,\Bigl|\,\G\Bigr]-2\Lambda\mathbb{V}
\end{equation*}
is centered. Note that we have
\begin{equation*}
 \Opnorm{\Lambda^{-1}}=\frac{n}{p_1}\,.
\end{equation*}
We start by bounding 
\begin{equation*}
 \Opnorm{\Lambda^{-1}}\E\bigl[\HSnorm{S}\bigr]\,.
\end{equation*}
Note that, since $S$ is centered, using the Cauchy-Schwarz inequality, we obtain
\begin{align}\label{S1}
 \E\bigl[\HSnorm{S}\bigr]&=\E\Bigl[\Bigl(\sum_{i,k=1}^d S_{i,k}^2\Bigr)^{1/2}\Bigr]\leq\Bigl(\sum_{i,k=1}^d\E\bigl[S_{i,k}^2\bigr]\Bigr)^{1/2}=\Bigl(\sum_{i,k=1}^d\Var\bigl(S_{i,k}\bigr)\Bigr)^{1/2}\notag\\
 &=\biggl(\sum_{i,k=1}^d\Var\Bigl(\E\bigl[\bigl(W'(i)-W(i)\bigr)\bigl(W'(k)-W(k)\bigr)\,\bigl|\,X\bigr]\Bigr)\biggr)^{1/2}\,.
\end{align}
By Lemma 3.3 of \cite{DP16} the Hoeffding decomposition of 
\[n\E\bigl[\bigl(W'(i)-W(i)\bigr)\bigl(W'(k)-W(k)\bigr)\,\bigl|\,X\bigr]\]
is given by 
\begin{equation*}
 n\E\bigl[\bigl(W'(i)-W(i)\bigr)\bigl(W'(k)-W(k)\bigr)\,\bigl|\,X\bigr]=\sum_{\substack{M\subseteq[n]:\\\abs{M}\leq p_i+p_k-1}}\bigl(p_i+p_k-\abs{M}\bigr)U_M(i,k)\,,
\end{equation*}
where we recall that 
\begin{equation*}
 W(i)W(k)=\sum_{\substack{M\subseteq[n]:\\\abs{M}\leq p_i+p_k}}U_M(i,k)
\end{equation*}
is the Hoeffding decomposition of $W(i)W(k)$. Hence, by the orthogonality of the terms in the Hoeffding decomposition we obtain, for $1\leq i,k\leq d$,
\begin{align*}
 &\Var\Bigl(n\E\bigl[\bigl(W'(i)-W(i)\bigr)\bigl(W'(k)-W(k)\bigr)\,\bigl|\,X\bigr]\Bigr)\\
 &=\sum_{\substack{M\subseteq[n]:\\\abs{M}\leq p_i+p_k-1}}\bigl(p_i+p_k-\abs{M}\bigr)^2\Var\bigl(U_M(i,k)\bigr)\\
 &=\sum_{q=0}^{p_i+p_k-1}\bigl(p_i+p_k-q\bigr)^2\sum_{\substack{M\subseteq[n]:\\\abs{M}=q}}\Var\bigl(U_M(i,k)\bigr)\\
 &\leq\bigl(p_i+p_k\bigr)^2\sum_{\substack{M\subseteq[n]:\\\abs{M}\leq p_i+p_k-1}}\Var\bigl(U_M(i,k)\bigr)\,.
\end{align*}
From \eqref{S1} we thus conclude that 
\begin{align}\label{S2}
 &\Opnorm{\Lambda^{-1}}\E\bigl[\HSnorm{S}\bigr]=\frac{n}{p_1}\E\bigl[\HSnorm{S}\bigr]\notag\\
 &\leq \frac{1}{p_1}\biggl(\sum_{i,k=1}^d\Var\Bigl(n\E\bigl[\bigl(W'(i)-W(i)\bigr)\bigl(W'(k)-W(k)\bigr)\,\bigl|\,X\bigr]\Bigr)\biggr)^{1/2}\notag\\
 &= \frac{1}{p_1}\biggl(\sum_{i,k=1}^d\sum_{q=0}^{p_i+p_k-1}\bigl(p_i+p_k-q\bigr)^2\sum_{\substack{M\subseteq[n]:\\\abs{M}=q}}\Var\bigl(U_M(i,k)\bigr)\biggr)^{1/2}\notag\\
 &\leq\frac{1}{p_1}\biggl(\sum_{i,k=1}^d\bigl(p_i+p_k\bigr)^2\sum_{\substack{M\subseteq[n]:\\\abs{M}\leq p_i+p_k-1}}\Var\bigl(U_M(i,k)\bigr)\biggr)^{1/2}\,.
\end{align}
Next, we turn to 
\begin{equation*}
 \Opnorm{\Lambda^{-1}}\E\bigl[\Enorm{W'-W}^3\bigr]=\frac{n}{p_1}\E\bigl[\Enorm{W'-W}^3\bigr]\,.
\end{equation*}
Using H\"older's inequality for sums and then the Cauchy-Schwarz inequality for $\E$, we obtain
\begin{align}\label{R1}
 \frac{n}{p_1}\E\bigl[\Enorm{W'-W}^3\bigr] &=\frac{n}{p_1}\Bigl(\sum_{i=1}^d\E\babs{W'(i)-W(i)}^2\cdot 1\Bigr)^{3/2}\notag\\
 &\leq\frac{n}{p_1}\Biggl(\biggl(\sum_{i=1}^d\E\babs{W'(i)-W(i)}^3\biggr)^{2/3}\cdot d^{1/3}\Biggr)^{3/2}\notag\\
 & =\frac{n}{p_1}d^{1/2}\sum_{i=1}^d\E\babs{W'(i)-W(i)}^3\notag\\
 &\leq\frac{n}{p_1}d^{1/2}\sum_{i=1}^d\biggl(\E\babs{W'(i)-W(i)}^2 \E\babs{W'(i)-W(i)}^4\biggr)^{1/2}\notag\\
 &=\frac{2\sqrt{2d}}{p_1}\sum_{i=1}^d\sigma_n(i)p_i\Bigl(\frac{n}{4p_i}\E\babs{W'(i)-W(i)}^4\Bigr)^{1/2}\,.
\end{align}
Here, we have used 
\begin{equation*}
 \E\babs{W'(i)-W(i)}^2=\frac{2p_i}{n}\E\bigl[W(i)^2\bigr]=\frac{2p_i}{n}\sigma_n(i)^2
\end{equation*}
to obtain the last identity. Now, taking into consideration that, in contrast to the one-dimensional setting, we did not normalize the components $W(i)$ of $W$, similarly to \eqref{1dle4} we obtain
\begin{align*}
 \frac{n}{4p_i}\E\babs{W'(i)-W(i)}^4&\leq 2\sum_{\substack{M\subseteq[n]:\\ \abs{M}\leq 2p_i-1}}\Bigl(1-\frac{\abs{M}}{2p_i}\Bigr)\Var\bigl(U_M(i,i)\bigr)+\kappa_{p_i}\frac{p_i}{n}\sigma_n(i)^4\,.
\end{align*}
Hence, from \eqref{R1} we conclude
\begin{align*}
 \Opnorm{\Lambda^{-1}}\E\bigl[\Enorm{W'-W}^3\bigr]&\leq \frac{4\sqrt{d}}{p_1}\sum_{i=1}^d\sigma_n(i)p_i\Biggl(\sum_{\substack{M\subseteq[n]:\\ \abs{M}\leq 2p_i-1}}\Bigl(1-\frac{\abs{M}}{2p_i}\Bigr)\Var\bigl(U_M(i,i)\bigr)\Biggr)^{1/2}\notag\\
 &\;+\frac{2\sqrt{2d}}{p_1\sqrt{n}}\sum_{i=1}^d\sigma_n(i)^3p_i^{3/2}\sqrt{\kappa_{p_i}}\notag\\
 &\leq   \frac{4\sqrt{d}}{p_1}\sum_{i=1}^d\sigma_n(i)p_i\Biggl(\sum_{\substack{M\subseteq[n]:\\ \abs{M}\leq 2p_i-1}}\Var\bigl(U_M(i,i)\bigr)\Biggr)^{1/2}\notag\\    
 &\;+\frac{2\sqrt{2d}}{p_1\sqrt{n}}\sum_{i=1}^d\sigma_n(i)^3p_i^{3/2}\sqrt{\kappa_{p_i}}        \,.
\end{align*}

\end{proof}

\begin{proof}[\bf Proof of Lemma \ref{genulemma}]
 Recall that we have 
\begin{align}\label{cb9}
 &\bigl(\psi_{i}\star_r^l\psi_{k}\bigr)(x_1,\dotsc,x_{k+i-2r},y_{l+1},\dotsc,y_r)\notag\\
 &=\int_{E^l}\psi_i(y_1,\dotsc,y_r,x_1,\dotsc,x_{i-r})\notag\\
 &\hspace{2cm}\cdot \psi_k(y_1,\dotsc,y_r,x_{i-r+1},\dotsc,x_{k+i-2r})d\mu^{\otimes l}(y_{1},\dots,y_l)\,.
\end{align}
Recalling also the expression \eqref{defpsis} of the kernels $\psi_i$ and $\psi_k$, respectively, and taking into account that $\mu$ is a probability measure as well as that, for $k\geq s$, the functions $g_k$ from \eqref{gk} satsify 
\begin{equation}\label{itg}
\int_{E^{k-s}}g_k(x_1,\dotsc,x_k)d\mu^{\otimes{k-s}}(x_{s+1},\dotsc,x_k)=g_s(x_1,\dotsc,x_s)
\end{equation}
by virtue of Fubini's theorem, we see that $ \bigl(\psi_{i}\star_r^l\psi_{k}\bigr)(x_1,\dotsc,x_{k+i-2r},y_{l+1},\dotsc,y_{r})$ is a linear combination, with coefficients only depending on $i,k,r$ and $l$ but not on $n$, of expressions of the form 
\begin{align*}
&G_{(a,b,j,m)}(x_{i_1},\dotsc,x_{i_{j-b-d}},y_{q_1},\dotsc, y_{q_c},x_{k_1},\dotsc, x_{k_{m-b-e}})\\
& :=\int_{E^t} g_j(u_1,\dotsc,u_b,y_{m_1},\dotsc,y_{m_d},  x_{i_1},\dotsc,x_{i_{j-b-d}}) \\
&\hspace{2cm}\cdot g_m(u_1,\dotsc,u_b,y_{n_1},\dotsc,y_{n_e},x_{k_1},\dotsc, x_{k_{m-b-e}})d\mu^{\otimes b}(u_1,\dots,u_b)
\\&=\bigl(g_j\star_a^b g_m\bigr)(x_{i_1},\dotsc,x_{i_{j-b-d}},y_{q_1},\dotsc, y_{q_c},x_{k_1},\dotsc, x_{k_{m-b-e}})
\,,
\end{align*}
where $0\leq j\leq i$, $0\leq m\leq k$, $0\leq b\leq l$, $0\leq b\leq a\leq r$, $1\leq i_1<\ldots<i_{j-b-d}\leq i-r$,\\
$i-r+1\leq k_1<\ldots<k_{l-b-e}\leq k+i-2r$ such that, in particular, the sets $\{i_1,\dotsc,i_{j-t-a}\}$ and $\{k_1,\dotsc,k_{m-t-b}\}$ are disjoint. 
Furthermore, we have $l+1\leq m_1<\ldots<m_d\leq r$, $l+1\leq n_1<\ldots<n_e\leq r$, $l+1\leq q_1<\ldots<q_c\leq r$ such that $\{q_1,\dotsc,q_c\}=\{m_1,\dotsc,m_d\}\cup\{n_1,\dotsc,n_e\}$, $d\leq j-b$, $e\leq m-b$ and 
$a:=b+|\{m_1,\dotsc,m_d\}\cap\{n_1,\dotsc,n_e\}|\leq b+c$. Note that $c\leq r-l$ and, hence, also $a-b\leq c\leq r-l$ as well as $a\leq(b+d)\wedge(b+e)\leq j\wedge m$.
Moreover, the number $j+m-a-b$ of arguments of the function $g_j\star_a^b g_m$ is at most as large as the number 
$i+k-r-l$ of arguments of the function $\psi_{i}\star_r^l\psi_{k}$. Finally, if $j=m=p$, then $i=k=p$ and $g_j=g_m=\psi$. This also implies that $b=l$ and $a=r$. Hence, we conclude that $(j,m,a,b)\in Q(i,k,r,l)$. 

Now, using the fact that $\mu$ is a probability measure, we obtain that 
\begin{align}\label{gl1}
 &\int_{E^{k+i-r-l}}G^2_{(a,b,j,m)}(x_{i_1},\dotsc,x_{i_{j-b-d}},y_{q_1},\dotsc, y_{q_c},x_{k_1},\dotsc, x_{k_{m-b-e}})\notag\\
 &\hspace{3cm} d\mu^{\otimes i+k-l-r}(x_1,\dotsc,x_{k+i-2r},y_{l+1},\dotsc,y_r)\notag\\
 &=\int_{E^{j+m-a-b}} \bigl(g_j\star_a^b g_m\bigr)^2d \mu^{\otimes j+m-a-b}=\|g_j\star_a^bg_m\|^2_{L^2(\mu^{\otimes j+m-a-b})}\,.
\end{align}
Since $\psi_{i}\star_r^l\psi_{k}$ is a finite linear combination with coefficients depending uniquely on $i,k,r$ and $l$ of the $G_{(a,b,j,m)}$, the claim thus follows from \eqref{gl1} and 
Minkowski's inequality.
\end{proof}

\section*{Acknowledgement} 
We would like to thank the anonymous referee for their useful comments and suggestions that helped us improve the presentation of our results.

\normalem
\bibliography{symU}{}
\bibliographystyle{alpha}
\end{document}